\newcommand{\e}{\varepsilon}
\renewcommand{\phi}{\varphi}
\DeclareMathOperator{\hdim}{dim_H}
\DeclareMathOperator{\pdim}{dim_P}
\DeclareMathOperator{\hlogdim}{dim^{\log}_H}
\DeclareMathOperator{\plogdim}{dim^{\log}_P}
\DeclareMathOperator{\supp}{supp}
\DeclareMathOperator{\diam}{diam}
\newtheorem{theorem}{Theorem}
\newtheorem{definition}[theorem]{Definition}
\newtheorem{proposition}[theorem]{Proposition}
\newtheorem{corollary}[theorem]{Corollary}
\newtheorem{lemma}[theorem]{Lemma}
\def\R{{\mathbb{R}}}
\def\Z{{\mathbb{Z}}}
\def\N{{\mathbb{N}}}
\def\C{{\mathbb{C}}}
\newcommand{\cD}{\mathcal{D}}
\newcommand{\RRP}{\mathsf{RRP}}
\def\moverlay{\mathpalette\mov@rlay}
\def\mov@rlay#1#2{\leavevmode\vtop{%
   \baselineskip\z@skip\lineskiplimit-\maxdimen
     \ialign{\hfil$\m@th#1##$\hfil\cr#2\crcr}}}
\newcommand{\charfusion}[3][\mathord]{
    #1{\ifx#1\mathop\vphantom{#2}\fi
        \mathpalette\mov@rlay{#2\cr#3}
      }
    \ifx#1\mathop\expandafter\displaylimits\fi}
\numberwithin{theorem}{section}
\numberwithin{equation}{section}
\begin{document}
\thispagestyle{empty}

\title{Full measure universality for Cantor Sets}

\author{Pablo Shmerkin}
\address{Department of Mathematics, The University of British Columbia, 1984 Mathematics Road, Vancouver BC V6T 1Z2, Canada}
\email{pshmerkin@math.ubc.ca}

\author{Alexia Yavicoli}
\address{Department of Mathematics, The University of British Columbia, 1984 Mathematics Road, Vancouver BC V6T 1Z2, Canada}
\email{yavicoli@math.ubc.ca}

\keywords{sums, Cantor sets, Erd\H{o}s similarity conjecture}
\subjclass{MSC 28A75, 28A78, 28A80}

\thanks{The authors were each partially supported by NSERC Discovery Grants.}

\begin{abstract}
    We investigate variants of the Erd\H{o}s similarity problem for Cantor sets. We prove that under a mild Hausdorff or packing logarithmic dimension assumption, Cantor sets are not full measure universal, significantly improving the known fact that sets of positive Hausdorff dimension are not measure universal. We prove a weaker result for all Cantor sets $A$: there is a dense $G_\delta$ set of full measure $X\subset\mathbb{R}^d$, such that for any bi-Lipschitz function $f:\mathbb{R}^d\to \mathbb{R}^d$, the set of translations $t$ such that $f(A)+t\subseteq X$ is of measure zero. Equivalently, there is a null set $B\subset\mathbb{R}^d$ such that $\mathbb{R}^d\setminus (f(A)+B)$ is null for all bi-Lipschitz functions $f$.

\end{abstract}

\maketitle


\section{Introduction and main results}

A set $A\subset \R$ is termed \emph{measure universal} if it can be affinely embedded into all sets of positive Lebesgue measure. More precisely, $A$ is measure universal if for every $E\subset\R$ of positive Lebesgue measure, there exists a non-constant affine map $f$ such that $f(A)\subset E$.

Erd\H{o}s famously posed the problem of showing that no infinite set is measure universal (a simple density argument shows that all finite sets are measure universal). Despite a significant amount of work on the problem (see eg.\cite{Falconer84, Eigen85, Bourgain87, Kolountzakis97} as well as the surveys \cite{Svetic00, JLM25}), the conjecture remains wide open. For example, it is not known whether $\{ 2^{-n}:n\in\N\}$ is measure universal.

Quite strikingly, it is not even known whether all uncountable compact sets fail to be measure universal. The goal of the present paper is to make progress on this question, both in the setting of the Erd\H{o}s similarity problem and some natural variants of it.

We review previous work on the subject. Note that an uncountable compact set (and even an uncountable Borel set) contains a topological Cantor set. Thus, it is equivalent to consider the problem for Cantor sets. Let $\mathcal{F}$ be a translation-invariant family of functions from $\R^d\to\R^d$. We say that $A\subset\R^d$ is \emph{$\mathcal{F}$-full measure universal} if for all sets $E\subset\R^d$ of full measure, there exists $f\in\mathcal{F}$ such that $f(A)\subset E$. If $\mathcal{F}$ is the family of non-constant affine maps, we drop it from the notation. Obviously, if a set is not full measure universal, it is not measure universal, and countable sets are trivially full measure universal.

J.~Gallagher, C-K.~Lai and E.~Weber \cite[Theorem 1.4]{GLW23} proved that no Cantor sets in $\R^d$ are universal in a topological sense (which by itself does not say anything about measure universality). They also showed that Cantor sets of positive Newhouse thickness in the line are not full measure universal \cite[Theorem 1.5]{GLW23}. It was noted by D-J.~Feng, C-K.~Lai and Y.~Xiong in \cite{FLX24} that the argument extends to show that sets of positive thickness in the line are not full measure universal for the class of bi-Lipschitz functions. Sets of positive thickness also have positive Hausdorff dimension. It was observed in \cite[Corollary 3.8]{JLM25} that a result of P.~Shmerkin and V.~Suomala \cite[Theorem 13.1]{SS18} implies that sets of positive Hausdorff dimension in $\R$ are not full measure universal. In fact, the same argument shows that sets of positive Hausdorff dimension in $\R^d$ are not full-measure universal. Regarding sets of zero dimension, the only result we are aware of is that of M.~Kolountzakis \cite{Kolountzakis23}, who proved that certain symmetric Cantor sets are not measure universal. This is a rather restricted class.

By the following well-known argument, full measure universality is closely related to nonempty interior in sumsets.
\begin{lemma} \label{lem:universality-sumset}
Let $\mathcal{F}$ be a translation-invariant family of non-constant functions from $\R^d\to\R^d$. Then, a set $A\subset\R^d$ is not $\mathcal{F}$-measure universal if and only if there exists a Lebesgue-null set $B\subset\R^d$ such that $f(A)+B=\R^d$ for all $f\in\mathcal{F}$.
\end{lemma}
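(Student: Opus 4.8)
The plan is to prove both implications directly by passing to complements, using only that $\mathcal{F}$ is closed under post-composition with translations. The single observation driving everything is that, for a set $B\subset\R^d$, the equation $f(A)+B=\R^d$ says precisely that $f(A)$ meets every translate of $-B$; dually, the failure of $\mathcal{F}$-full measure universality with witness $E$ means exactly that $f(A)\cap N\neq\emptyset$ for all $f\in\mathcal{F}$, where $N=\R^d\setminus E$.

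For the ``if'' direction, suppose $B$ is Lebesgue-null and $f(A)+B=\R^d$ for every $f\in\mathcal{F}$. I would set $E=\R^d\setminus(-B)$, which has full measure since $-B$ is null. Given any $f\in\mathcal{F}$, the hypothesis gives $0\in f(A)+B$, so $f(a)=-b$ for some $a\in A$ and $b\in B$; hence $f(a)\in -B$, i.e.\ $f(A)\not\subseteq E$. Thus no $f\in\mathcal{F}$ maps $A$ inside $E$, so $A$ is not $\mathcal{F}$-full measure universal. (This direction uses nothing about $\mathcal{F}$.)

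For the ``only if'' direction, assume $A$ is not $\mathcal{F}$-full measure universal, and fix a full measure set $E$ with $f(A)\not\subseteq E$ for all $f\in\mathcal{F}$; put $N=\R^d\setminus E$, a null set, so $f(A)\cap N\neq\emptyset$ for every $f\in\mathcal{F}$. Now I would invoke translation invariance: for $f\in\mathcal{F}$ and $t\in\R^d$ the map $x\mapsto f(x)+t$ also lies in $\mathcal{F}$, so $(f(A)+t)\cap N\neq\emptyset$, i.e.\ $t\in N-f(A)$. As $t$ is arbitrary, $N-f(A)=\R^d$, equivalently $f(A)-N=\R^d$, for every $f\in\mathcal{F}$. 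Hence $B:=-N$ is a null set with $f(A)+B=f(A)-N=\R^d$ for all $f\in\mathcal{F}$, as required.

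There is essentially no obstacle here; the only point requiring a little care is the sign bookkeeping (taking $E=\R^d\setminus(-B)$ rather than $\R^d\setminus B$, and $B=-N$ rather than $N$), which is why one uses the invariance of Lebesgue measure under $x\mapsto -x$. It is worth flagging for later use that only invariance under post-composition with translations is needed, so the lemma applies both to the family of non-constant affine maps and to the family of bi-Lipschitz self-maps of $\R^d$.
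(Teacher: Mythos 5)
Your proof is correct and follows essentially the same approach as the paper's: both directions rest on the duality between $f(A)\subseteq E$ and $f(A)\cap(\R^d\setminus E)\neq\emptyset$, combined with closure of $\mathcal{F}$ under post-composition with translations so that every $t\in\R^d$ is realized as a point of $B-f(A)$. You are in fact a bit more careful than the paper about the reflection bookkeeping (taking $E=\R^d\setminus(-B)$ and $B=-N$, where the paper's quick write-up conflates $B$ with $-B$), you spell out the converse that the paper dismisses in one line, and you implicitly correct the statement, which should read ``not $\mathcal{F}$-\emph{full} measure universal'' to match the definition being used.
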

\begin{proof}
        If $A$ is not $\mathcal{F}$-measure universal, then there exists a set $B$ of zero measure such that $f(A)\not\subset \R^d\setminus B$ for all $f\in\mathcal{F}$. Fix $t\in\R^d$ and let $\tilde{f}(x)=f(x)+t$ where $f$ fixes $0$. We know that $\tilde{f}(A)\cap B\neq \emptyset$, which is the same as saying that $t\in f(A)-B$. Hence, $f(A)-B=\R^d$. The argument can be easily reversed to give the converse implication.
\end{proof}

Constructions of M.~Talagrand \cite{Talagrand76} and of P.~Erd\H{o}s, K.~Kunen and R.D.~Mauldin \cite{EKM81} show that for any uncountable compact set $A$ in the line, there exists a compact set $B$ of zero measure such that $A+B$ has nonempty interior. If one could take the same set $B$ to work for all non-constant linear functions $f$, the above lemma would imply that $A$ is not full measure universal. The constructions in \cite{Talagrand76, EKM81} do not have this property. However, we adapt some of their ideas.

In our first main result, we show that sets of large logarithmic Hausdorff or Packing dimension are not full measure universal. This class includes all sets of positive packing dimension (which is a weaker condition than positive Hausdorff dimension), as well as many sets of zero Hausdorff or packing dimension. We recall the definition of generalized Hausdorff and Packing measures in section \ref{sectionGHPM}.
\begin{definition}
    The \emph{logarithmic Hausdorff (resp. packing) dimension}  of a set $A\subset\R^d$, denoted  $\hlogdim(A)$ (resp. $\plogdim(A)$) is defined as the infimum of all $s>0$ such that $\mathcal{H}^{\log^{-s}(1/x)}(A)=0$ (resp. $\mathcal{P}^{\log^{-s}(1/x)}(A)=0$). In both cases, if the set in question is empty, we define the logarithmic dimension to be $\infty$.
\end{definition}
Note that a set of positive Hausdorff (resp. packing) dimension has infinite logarithmic dimension (and so do many sets of zero Hausdorff or packing dimension).
\begin{theorem} \label{thm:full-sumset-Erdos}
    For any Borel set $A\subset\R^d$ such that either $\plogdim(A)>2$ or $\hlogdim(A)>1$,  there exists an $F_\sigma$ set $B$ of zero Lebesgue measure such that $f(A)+B=\R^d$ for all non-constant affine functions $f:\R^d\to\R^d$.

    In particular, by Lemma \ref{lem:universality-sumset}, $A$ cannot be affinely embedded into the full measure and dense $G_\delta$ set $\R^d\setminus (-B)$.
\end{theorem}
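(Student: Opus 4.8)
The plan is to build $B$ as a countable union $\bigcup_j C_j$ of compact Lebesgue-null sets, each a random Cantor set (a fractal percolation) designed so that, with positive probability, it meets every affine copy of $A$ whose parameters lie in a prescribed compact range.

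First the reductions. Writing $f(x)=Lx+v$, we have $f(A)+B=LA+B+v$, so (as $\R^d-v=\R^d$) the identity $f(A)+B=\R^d$ is equivalent to $LA+B=\R^d$, which in turn is equivalent to $(MA+t)\cap B\neq\emptyset$ for every $t\in\R^d$, where $M=-L$. Thus it suffices to produce a Lebesgue-null $F_\sigma$ set $B$ meeting every affine copy $MA+t$ with $M$ invertible (the case relevant for affine embeddings). The parameter space $GL_d(\R)\times\R^d$ is $\sigma$-compact, say $\bigcup_j\mathcal{Q}_j$ with $\mathcal{Q}_j$ compact, so it is enough to construct for each $j$ a compact null $C_j$ meeting $MA+t$ for all $(M,t)\in\mathcal{Q}_j$, and set $B=\bigcup_j C_j$. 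Next, pass to a compact subset and invoke Frostman's lemma for the gauge $\log^{-s}(1/x)$ (in the Hausdorff case), respectively the characterisation of packing dimension in terms of (modified) upper box-counting dimension together with a suitable subsequence of scales (in the packing case): this yields an exponent $s>1$, a compact set $A$, and a Borel probability measure $\mu$ on $A$ with $\mu(B(x,\delta))\le\log^{-s}(1/\delta)$ for all $x$ and all $\delta$ in an infinite scale set. Fixing $j$: on $\mathcal{Q}_j$ the maps $g_{M,t}(x)=Mx+t$ are uniformly bi-Lipschitz, all copies $MA+t$ lie in a fixed cube $Q_0$, and each pushforward $\mu_{M,t}:=(g_{M,t})_*\mu$ satisfies $\mu_{M,t}(B(y,r))\le C_j\log^{-s}(1/r)$ for small $r$.

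Now the construction of $C_j$. Choose scales $1=\delta_0>\delta_1>\cdots\to 0$ decaying very rapidly (doubly-exponentially; pinned down below) and retention probability $p=\tfrac12$. Starting from $C_j^{(0)}=Q_0$, subdivide each side-$\delta_k$ cube of $C_j^{(k)}$ into side-$\delta_{k+1}$ cubes, retain each independently with probability $p$, let $C_j^{(k+1)}$ be the union of the retained cubes, and put $C_j=\bigcap_k C_j^{(k)}$; then $C_j$ is compact and, since $\mathbb{E}|C_j^{(k)}|=p^k|Q_0|\to 0$, it is Lebesgue-null for every realisation. We maintain the invariant: for every $(M,t)$ in a finite $\delta_k$-net $\mathcal{N}_k$ of $\mathcal{Q}_j$ (chosen increasing with $\bigcup_k\mathcal{N}_k$ dense and $\#\mathcal{N}_k\le(C_j/\delta_k)^{d^2+d}$), one has $\mu_{M,t}\big(C_j^{(k)}\cap(MA+t)\big)\ge\eta_k$, where $\eta_0=1$ and $\eta_{k+1}=\tfrac12 p\,\eta_k$. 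For the inductive step, write $W:=\mu_{M,t}(C_j^{(k)}\cap(MA+t))$ as a sum $\sum_R w_R$ over the side-$\delta_{k+1}$ subcubes $R$ of $C_j^{(k)}$ meeting $MA+t$, with weights $w_R\le C_j\log^{-s}(1/\delta_{k+1})$; then $\mu_{M,t}(C_j^{(k+1)}\cap(MA+t))\ge\sum_R\xi_R w_R$ with $\xi_R$ independent Bernoulli variables of mean $p$, and a Chernoff lower-tail bound gives
\[
\mathbb{P}\Big(\textstyle\sum_R\xi_R w_R<\tfrac12 pW\Big)\le\exp\!\Big(-c\,\frac{pW}{\max_R w_R}\Big)\le\exp\!\big(-c_j\,p\,\eta_k\,\log^{s}(1/\delta_{k+1})\big).
\]
Although $\eta_k$ decays geometrically in $k$, the deviation exponent carries the full power $\log^{s}(1/\delta_{k+1})$; since $s>1$ and $\log\#\mathcal{N}_{k+1}=O(\log(1/\delta_{k+1}))$, one may choose $\delta_{k+1}$ decaying fast enough that summing over $\mathcal{N}_{k+1}$ gives failure probability at most $2^{-k-2}$. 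Hence with probability $\ge 1-\sum_k 2^{-k-2}>0$ the invariant holds at every level; fix such a realisation. For $(M,t)\in\bigcup_k\mathcal{N}_k$, the sets $C_j^{(k)}\cap(MA+t)$ are then nested nonempty compacts, so $C_j\cap(MA+t)\neq\emptyset$; and since $C_j$ and $A$ are compact and $(M,t)\mapsto MA+t$ is Hausdorff-continuous, $\{(M,t)\in\mathcal{Q}_j:(MA+t)\cap C_j\neq\emptyset\}$ is closed and contains the dense set $\bigcup_k\mathcal{N}_k$, hence equals $\mathcal{Q}_j$. Taking $B=\bigcup_j C_j$ completes the construction; the final assertion follows from Lemma \ref{lem:universality-sumset} (applied with $-B$), noting that a Lebesgue-null $F_\sigma$ set has empty interior, so its complement is a dense $G_\delta$ of full measure.

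The main obstacle is the tension between the two demands on $C_j$: to be Lebesgue-null it must be percolated so sparsely that its Frostman mass $\mu_{M,t}$ on each copy $MA+t$ decays, yet it must still intersect this measure-zero target for a whole continuum of parameters. The resolution is that the relevant quantity is the logarithmic Frostman mass, whose geometric decay across the scale sequence is dominated by the $\log^{s}$-gain in the large-deviation exponent precisely when $s>1$ — equivalently, in the packing case, when a logarithmic Frostman exponent $>1$ is available, which is what $\plogdim(A)>2$ provides — so one is free to let the scales $\delta_k$ decay fast enough to absorb both this decay and the entropy of the parameter net. The remaining issue, transferring the net estimate to the full compact family of affine maps, is a routine (if, for $d\ge 2$, somewhat delicate near the cube boundaries) robustness argument; it is also where compactness of $C_j$ is essential.
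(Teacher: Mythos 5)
Your plan --- random fractal percolation, Frostman measure, Chernoff concentration plus a net over a $\sigma$-compact exhaustion of $\mathrm{GL}_d(\R)\times\R^d$ --- is in the same spirit as the paper's argument, which also reduces to compact families of maps, uses a covering-number analogue of the Frostman bound, and proves the key discrete covering step by a union bound over a random choice of $B$ (Lemma~\ref{lem:sum-combinatorial}). However, the specific inductive scheme you propose has a genuine gap.

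The invariant you maintain at level $k$ is $\mu_{M,t}\bigl(C_j^{(k)}\cap(MA+t)\bigr)\ge\eta_k$ \emph{only for $(M,t)$ in the $\delta_k$-net $\mathcal{N}_k$}, but your inductive step applies the Chernoff bound with the hypothesis $W\ge\eta_k$ for every $(M,t)\in\mathcal{N}_{k+1}$, including the new points in $\mathcal{N}_{k+1}\setminus\mathcal{N}_k$. For a new point, the closest element of $\mathcal{N}_k$ is at distance $\sim\delta_k$, which is the \emph{same} as the side length of the cubes making up $C_j^{(k)}$; a perturbation of that size can move $g_{M,t}^{-1}\bigl(C_j^{(k)}\bigr)$ completely off the support of $\mu$, so nothing controls $W$ at the new net points. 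Replacing $\mathcal{N}_k$ by a much finer $\theta_k$-net (with $\theta_k\ll\delta_k$) does not rescue the argument: to transfer the mass lower bound across a perturbation of size $\theta_k$ one needs to bound $\mu_{M',t'}$ of a $\theta_k$-neighbourhood of the boundary of $C_j^{(k)}$, i.e., of thin slabs; the Frostman bound $\mu(B(x,r))\lesssim\log^{-s}(1/r)$ controls balls, not slabs, and a set of zero dimension can concentrate arbitrarily near a hyperplane (or, in $d=1$, spread over too many of the retained intervals for the pointwise bound on endpoints to help). So the robustness needed between successive levels is not ``routine'' and is not supplied. (Your closing continuity argument for passing from the dense set $\bigcup_k\mathcal{N}_k$ to all of $\mathcal{Q}_j$ is fine; the gap is in the induction, one level earlier.) A second, smaller issue: for the packing-dimension branch you assert a logarithmic Frostman measure along a subsequence of scales; the paper's Lemma~\ref{lem:large-Hausdorff-packing}(ii) produces only a covering-number lower bound along a subsequence, and the existence of the measure-theoretic version you invoke is not justified.

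The paper avoids both problems by \emph{not} carrying a decaying mass invariant across levels. Its RRP mechanism (Lemma~\ref{lem:nonemptyinterior}) restarts at each rectangle $Q_s$ with a fresh finite subset $A'\subset A$ and a fresh elementary $T_{s,a}$; the uniformity over the family $\mathcal{F}$ is obtained inside a \emph{single} application of the probabilistic sumset covering lemma (Lemma~\ref{lem:sum-combinatorial} via Corollary~\ref{cor:discrete-to-continuous}), where one unions over a net of maps and then passes to all $f\in\mathcal{F}$ because the target set $T$ is a union of cubes at a scale chosen \emph{after} the net is fixed. Because the scale $\delta$ at each step is a free parameter independent of the previous construction, there is no net-to-net transfer to worry about, and only the covering number $|A|_\delta\gtrsim\log^{1+\eta}(1/\delta)$ (rather than a measure) is needed, which is exactly what both the $\hlogdim>1$ and $\plogdim>2$ hypotheses supply. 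If you want to repair your version, the cleanest route is to adopt this ``restart at every level'' structure and replace the Frostman measure with the covering-number condition; alternatively you would need an additional thickening/averaging step and a slab-measure bound, neither of which follows from the hypotheses.
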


In the real line, we also show that the above also holds for all non-constant polynomial functions, see Corollary \ref{cor:polynomial-images}.

If $A\subset\R^d$ is an arbitrary Cantor set, we are unable to show that $f(A)+B=\R^d$ for a null set $B$ and all non-constant affine functions $f$, but we can prove that $f(A)+B$ has full Lebesgue measure, even for all bi-Lipschitz functions $f$. In fact, we prove something more general.
\begin{theorem} \label{thm:full-measure-sumset}
    For any dimension function $\phi$, there exists an $F_\sigma$-set $B$ of zero measure such that the following holds:
    \[
        \bigl|\R^d\setminus (A+B)\bigr| = 0
    \]
    for all Borel sets $A$ with $\mathcal{H}^\phi(A)>0$.
\end{theorem}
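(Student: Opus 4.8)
\section*{Proof proposal for Theorem \ref{thm:full-measure-sumset}}

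The plan is to first reduce the statement to a purely analytic one. By Frostman's lemma, if $\mathcal H^\phi(A)>0$ then $A$ contains a compact subset carrying a nonzero measure $\mu$ with $\mu(B(x,r))\le\phi(r)$ for all $x\in\R^d$ and $r>0$, and $A+B\supseteq\supp\mu+B$; so it suffices to find one $F_\sigma$ null set $B$ with $\bigl|\R^d\setminus(\supp\mu+B)\bigr|=0$ for \emph{every} such "Frostman measure'' $\mu$. It is convenient to work modulo $\Z^d$ and to look for $B$ of the form $B=B_0+\Q^d$ with $B_0\subseteq\R^d$ an $F_\sigma$ null set; then $B$ is automatically $F_\sigma$ and null, and — this is the point of the rational translates — the analytic (hence measurable) set $A+B=A+B_0+\Q^d$ is invariant under translation by $\Q^d$, which acts ergodically on $\R^d$. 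Consequently $A+B$ has measure zero or full measure, so it is enough to arrange merely that $\bigl|\supp\mu+B_0\bigr|>0$ for every Frostman measure $\mu$ as above. (The reduction of the theorem for general Borel $A\subseteq\R^d$ to this torus statement is routine.)

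The heart of the matter is the choice of an auxiliary dimension function $\Lambda$ with $\Lambda(r)\to 0$, $\Lambda(r)\ge r^d$ near $0$, and — crucially —
\[
  \int_0^1 \frac{d\phi(r)}{\Lambda(r)}<\infty .
\]
One may always take $\Lambda=\sqrt\phi$ (assuming, as we may, $\phi(r)\ge r^d$, since otherwise $\mathcal H^\phi\equiv 0$ on $\R^d$): then $\Lambda\ge\sqrt{r^d}\ge r^d$ and $\int_0^1 d\phi/\sqrt\phi=2\sqrt{\phi(1)}<\infty$. The decisive elementary estimate is that with this $\Lambda$, \emph{every} measure $\mu$ with $\mu(B(x,r))\le\phi(r)$ and $\mu(\R^d/\Z^d)\le 1$ has finite $\Lambda$-energy
\[
  I_\Lambda(\mu)\ :=\ \iint\frac{d\mu(x)\,d\mu(y)}{\Lambda(|x-y|)}\ \le\ M(\phi)<\infty,
\]
with $M(\phi)$ depending on $\phi$ alone: writing the inner integral by the layer-cake formula as $\int_0^\infty\mu\bigl(B(x,\Lambda^{-1}(1/t))\bigr)\,dt$, bounding by $\phi(\Lambda^{-1}(1/t))$, substituting $u=\Lambda^{-1}(1/t)$ and integrating by parts reduces everything to the finite integral $\int_0^1 d\phi/\Lambda$.

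Now let $B_0$ be (a realisation of) a countable union $\bigcup_m B_0^{(m)}$ of independent Mandelbrot-percolation-type random Cantor sets on a sequence of scales $\rho_k\downarrow 0$, tuned so that the expected measure of the $k$-th generation is $\asymp\Lambda(\rho_k)$; each $B_0^{(m)}$ is then a.s.\ compact and null (as $\Lambda(\rho_k)\to 0$), so $B_0$ is a.s.\ $F_\sigma$ and null. Write $\sigma^{(m)}$ for the associated random cascade measure, normalised so $\mathbb E\sigma^{(m)}=$ Lebesgue; its correlation kernel satisfies $\mathbb E[d\sigma^{(m)}(x)\,d\sigma^{(m)}(y)]\asymp\Lambda(|x-y|)^{-1}\,dx\,dy$. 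For a \emph{fixed} Frostman measure $\mu$ one then gets $\mathbb E\|\mu*\sigma^{(m)}\|_{L^2}^2\asymp I_\Lambda(\mu)<\infty$, so a.s.\ $\mu*\sigma^{(m)}\in L^2$; since $\mu*\sigma^{(m)}$ has positive total mass and is supported on $\supp\mu+B_0^{(m)}\subseteq\supp\mu+B_0$, that set has positive Lebesgue measure. (Equivalently, in the language of intersections of random fractals: finiteness of $I_\Lambda(\mu)$ says $\supp\mu$ has positive $\Lambda$-capacity, which is exactly the condition that each fixed translate of it be hit by $B_0^{(m)}$ with probability bounded below; independence over $m$ then gives, for each fixed $x$, that $x\in\supp\mu+B_0$ almost surely, and Fubini gives $\bigl|\R^d\setminus(\supp\mu+B_0)\bigr|=0$ a.s.)

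The step I expect to be the main obstacle is upgrading these almost-sure statements — valid a priori for each individual $\mu$ — to a \emph{single} realisation of $B_0$ that works simultaneously for the uncountable family of all Frostman-$\phi$ measures (equivalently, all Borel $A$ with $\mathcal H^\phi(A)>0$). The mechanism is that all the estimates above are \emph{uniform}: by the quantitative Frostman lemma, every compact set of $\phi$-Hausdorff content at least $\eta$ carries a Frostman measure of mass $\gtrsim\eta$, so $I_\Lambda$ and the intersection probabilities are bounded uniformly over the Hausdorff-compact family $\mathcal K_\eta$ of such sets; combining this uniformity with the separability of $\mathcal K_\eta$ and the semicontinuity of $A\mapsto\bigl|\R^d\setminus(A+B_0)\bigr|$ lets one reduce to a countable dense subfamily, which can be handled on a single almost-sure event. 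Fixing such a realisation of $B_0$ and setting $B:=B_0+\Q^d$, we get an $F_\sigma$ null set such that $A+B$ has positive measure and is $\Q^d$-invariant, hence co-null, for every Borel $A$ with $\mathcal H^\phi(A)>0$, which is the assertion of the theorem.
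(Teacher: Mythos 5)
Your approach is genuinely different from the paper's, and you correctly identify where the difficulty lies, but the argument for that step has a real gap.

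Your single-measure analysis is sound in outline: with $\Lambda=\sqrt\phi$ the $\Lambda$-energy $I_\Lambda(\mu)$ is uniformly bounded over Frostman-$\phi$ measures (your layer-cake/integration-by-parts computation works), and a variable-branching cascade with level-$k$ expected measure $\asymp\Lambda(\rho_k)$ has correlation kernel $\asymp\Lambda(|x-y|)^{-1}$ and null support, so $\mathbb E\|\mu*\sigma^{(m)}\|_2^2\lesssim I_\Lambda(\mu)$, giving, \emph{for each fixed} $\mu$, an almost-sure event on which $|\R^d\setminus(\supp\mu+B_0)|=0$. The gap is precisely the upgrade to a single realisation of $B_0$ valid for the uncountable family of all $\mu$, and the mechanism you offer — pass to a countable Hausdorff-dense family of sets and invoke semicontinuity of $A\mapsto|\R^d\setminus(A+B_0)|$ — does not work. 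If $A_{n}\to A$ in the Hausdorff metric, then $A_{n}\subset A^{(\e_n)}$, so $A_n+B_0\subset(A+B_0)^{(\e_n)}$ and the neighbourhoods shrink to the \emph{closure} $\overline{A+B_0}$, not to $A+B_0$ itself. Your $B_0$ is by construction a countable union of independent compacts, hence $F_\sigma$ but not closed; $A+B_0$ is then $F_\sigma$ and can have strictly smaller measure than its closure. So from $|A_{n}+B_0|$ full you only deduce $|\overline{A+B_0}|$ full, which is not the conclusion you need. To actually close this gap one would need quantitative large-deviation estimates for $\|\mu*\sigma\|_2^2$ together with a multi-scale net/union-bound over discretised measures, in the style of the ``spatially independent martingales'' machinery of Shmerkin--Suomala \cite{SS18} — a substantial extra step that you don't sketch, and which the authors of the present paper (who developed that machinery) explicitly say in the introduction to Section~\ref{sec:full-measure} they were unable to push to arbitrary gauge functions because of logarithmic losses in random constructions.

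The paper sidesteps the issue altogether by replacing the random construction with a deterministic one. Lemma~\ref{lem:Gauss-sum} ($k$-th-power residues in $\mathbb F_q$ have linear bias $<q^{-1/2}$) feeds into Lemma~\ref{lem:bias-to-sumset} and Proposition~\ref{prop:large-sumset} to produce, at every scale, a single explicit small set $B'$ whose sumset with \emph{every} sufficiently large discrete set nearly fills $\Z_m^d$; Corollaries~\ref{cor:large-sum-Z^d}--\ref{cor:large-sum-R^d} transfer this to $\R^d$, and the iterative construction in the proposition preceding Theorem~\ref{thm:full-measure-sumset}, together with the uniform-largeness Lemma~\ref{lem:Hausdorff-unif-large}, assembles these building blocks into a null $B$ that works simultaneously for all $A$ with $\mathcal H^\phi(A)>\eta$. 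Because each building block already works for all admissible $A$ at once, there is no almost-sure event to globalise. That deterministic-for-all-$A$ feature is exactly what your proposal lacks, and it is the crux of the proof.

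Two minor additional remarks. First, your reduction ``$\phi(r)\ge r^d$ WLOG'' needs the normalisation $\phi(r)\ge c\,r^d$ followed by rescaling (fine, but worth saying). Second, your claim that $\supp\mu+B_0$ has positive measure because $\mu*\sigma^{(m)}\in L^2$ also requires $\sigma^{(m)}$ to have positive total mass on the same event, which holds only with positive probability (not a.s.), so even the single-$\mu$ conclusion already needs the union over $m$; this is consistent with your sketch, but it is part of why a single compact $B_0$ does not suffice and hence why the semicontinuity route fails.
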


It is well-known that for all Cantor sets $A$, there is a dimension function $\phi$ such that $\mathcal{H}^\phi(A)>0$, see e.g. \cite[Theorem 35]{Rogers70}. The class of sets $A$ such that $\mathcal{H}^{\phi}(A)>0$ is invariant under bi-Lipschitz maps. Thus, we get the following corollary.
\begin{corollary} \label{cor:full-measure-sumset}
    Let $A\subset\R^d$ be a Cantor set. There exists an $F_\sigma$-set $B$ of zero measure such that
    \[
        \bigl|\R^d\setminus (f(A)+B)\bigr| = 0
    \]
    for all bi-Lipschitz functions $f:\R^d\to \R^d$.

    In particular, there is a dense $G_\delta$ set of full measure $X\subset\R^d$ such that for any bi-Lipschitz function $f:\R^d\to \R^d$, the set of translations $t$ such that $f(A)+t\subseteq X$ is of measure zero.
\end{corollary}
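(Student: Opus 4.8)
The plan is to derive Corollary~\ref{cor:full-measure-sumset} directly from Theorem~\ref{thm:full-measure-sumset}, using two classical facts about generalized Hausdorff measure: (i) every Cantor set has positive $\mathcal{H}^\phi$-measure for \emph{some} dimension function $\phi$ (Rogers, \cite[Theorem 35]{Rogers70}); and (ii) for each \emph{fixed} dimension function $\phi$, the class of Borel sets $C\subseteq\R^d$ with $\mathcal{H}^\phi(C)>0$ is invariant under bi-Lipschitz homeomorphisms of $\R^d$. First I would apply (i) to the given Cantor set $A$ to fix a dimension function $\phi$ with $\mathcal{H}^\phi(A)>0$; by (ii) this same $\phi$ then satisfies $\mathcal{H}^\phi(f(A))>0$ for \emph{every} bi-Lipschitz $f\colon\R^d\to\R^d$. (Concretely, if $f$ has bi-Lipschitz constant $L$ and $\{V_i\}_i$ covers $f(A)$ with $\diam V_i\le\delta$, then $\{f^{-1}(V_i)\}_i$ covers $A$ with $\diam f^{-1}(V_i)\le L\diam V_i$, and, taking $\phi$ concave with $\phi(0)=0$, $\sum_i\phi(\diam f^{-1}(V_i))\le L\sum_i\phi(\diam V_i)$; so a cover witnessing $\mathcal{H}^\phi(f(A))=0$ would witness $\mathcal{H}^\phi(A)=0$.)

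With $\phi$ fixed, apply Theorem~\ref{thm:full-measure-sumset} to obtain an $F_\sigma$-set $B\subseteq\R^d$ of zero Lebesgue measure such that $|\R^d\setminus(C+B)|=0$ for every Borel set $C$ with $\mathcal{H}^\phi(C)>0$. Taking $C=f(A)$ and invoking the previous paragraph yields $|\R^d\setminus(f(A)+B)|=0$ for all bi-Lipschitz $f$, which is the first assertion; note that $B$ depends on $A$ (through $\phi$) but not on $f$.

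For the final ``in particular'' statement I would set $X:=\R^d\setminus(-B)$. Since $-B$ is an $F_\sigma$-set of zero measure, $X$ is a $G_\delta$-set of full measure, and a full-measure set is automatically dense (its complement, being null, has empty interior). For a bi-Lipschitz $f$, one has $f(A)+t\subseteq X$ if and only if $a+t\notin -B$ for every $a\in f(A)$, i.e.\ if and only if $t\notin -B-f(A)=-(f(A)+B)$; hence $\{t:f(A)+t\subseteq X\}=\R^d\setminus\bigl(-(f(A)+B)\bigr)$, which is Lebesgue-null because $\R^d\setminus(f(A)+B)$ is null by the preceding step.

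The substantive content has already been dispatched in Theorem~\ref{thm:full-measure-sumset}, which is available to us; within this corollary the only point that genuinely needs attention is the \emph{uniformity} — that one single dimension function $\phi$, chosen merely to witness $\mathcal{H}^\phi(A)>0$, automatically witnesses $\mathcal{H}^\phi(f(A))>0$ for \emph{all} bi-Lipschitz $f$, so that the \emph{same} null set $B$ serves every $f$. This is precisely the bi-Lipschitz invariance in (ii) (and the reason it is convenient to take $\phi$ concave); everything else is routine.
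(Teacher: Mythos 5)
Your proof is correct and follows essentially the same route as the paper: invoke Rogers' theorem to get a dimension function $\phi$ with $\mathcal{H}^\phi(A)>0$, use bi-Lipschitz invariance of the class $\{C:\mathcal{H}^\phi(C)>0\}$ (for which your concavity remark supplies the doubling-type control on $\phi$ that the paper leaves implicit), apply Theorem~\ref{thm:full-measure-sumset} to a single $\phi$-dependent $B$, and derive the ``in particular'' by the same translation argument used in Lemma~\ref{lem:universality-sumset}. Your write-up is in fact slightly more explicit than the paper's, which merely asserts the invariance and defers the final step to Lemma~\ref{lem:universality-sumset}.
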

The latter claim follows from the same argument as in Lemma \ref{lem:universality-sumset}. Kolountzakis \cite{Kolountzakis97} proved that for any infinite set $A\subset\R$ there exists a set $X$ of positive measure such that for all $r\neq 0$, the set of translations $t$ such that $rA+t\subseteq X$ is of measure zero. Corollary \ref{cor:full-measure-sumset} recovers and strengthens this result when $A$ is a Cantor set.

Theorem \ref{thm:full-measure-sumset} in the case of sets of positive Hausdorff dimension follows from the existence of sets of zero measure and arbitrarily large Hausdorff dimension that support measures with fast Fourier decay. This is likely known to experts, but we recall the argument in Lemma \ref{lem:positive-measure-positive-dim} since we were not able to find it in the literature. In order to establish Theorem \ref{thm:full-measure-sumset} for arbitrary dimension functions, we also rely on Fourier decay, but applied locally rather than globally. The construction relies on classical number-theoretic results closely related to Gauss sums.

The paper is organized as follows. In Section \ref{sec:largeness}, we review general Hausdorff and packing measures and introduce a related notion of largeness that turns out to be more natural in the context of full measure universality. In Section \ref{sec:nonempty-interior}, we prove a general result on nonempty interior of sumsets, Theorem \ref{thm:technical-nonemptyinterior}, and deduce Theorem \ref{thm:full-sumset-Erdos} and other applications. In Section \ref{sec:full-measure}, we prove Theorem \ref{thm:full-measure-sumset}.

\subsection*{Acknowledgements}
The authors would like to express their gratitude to the Banff International Research Station for Mathematical Innovation and Discovery (BIRS) for providing an inspiring environment where this project was initiated. Alexia Yavicoli would like to thank Chun-Kit Lai, Károly Simon and Ville Suomala for helpful discussions at BIRS. Both authors thank De-Jun Feng for valuable discussion on the Erd\H{o}s similarity problem while visiting CUHK. We thank an anonymous referee for a careful reading of the manuscript and many helpful comments.

\section{Hausdorff and packing measures and a related notion of largeness}

\label{sec:largeness}

\subsection{Notation}

We let $\mathcal{D}_m$ denote the family of closed dyadic cubes in $\R^d$ of side length $2^{-m}$ (the ambient dimension $d$ is implicit from context). For a set $A\subset\R^d$, we let $\mathcal{D}_m(A)$ denote the family of cubes in $\mathcal{D}_m$ that intersect $A$. Let $\sigma(\mathcal{D}_m)$ denote the $\sigma$-algebra generated by the cubes in $\mathcal{D}_m$, that is, the collection of all sets which are unions of cubes in $\mathcal{D}_m$.

Given a cube $Q$ and a constant $C>0$, the notation $CQ$ refers to the cube with the same centre as $Q$ and side length $C\cdot \text{side length}(Q)$.

The $\delta$-neighbourhood of a set $A\subset \R^d$ is denoted by $A^{(\delta)}=\bigcup_{x\in A} B(x,\delta)$.

We write $[m]:=\{0,\ldots,m-1\}$.

The canonical vectors of $\R^d$ are denoted by $e_1,\ldots,e_d$.

\subsection{Hausdorff metric}
\label{subsec:Hausdorff-metric}

Given two compact sets $K_1,K_2\subset \R^d$, the Hausdorff distance between them is defined as
\[
    \mathsf{H}(K_1,K_2):=\inf\{\delta>0: K_1\subset K_2^{(\delta)} \text{ and } K_2\subset K_1^{(\delta)} \}.
\]
The space of nonempty compact subsets of $\R^d$, equipped with the Hausdorff metric, is a complete metric space.

\subsection{General Hausdorff and packing Measures}\label{sectionGHPM}
We recall the definition of Hausdorff and packing measure associated with an arbitrary dimension function.
\begin{definition}
    The space of \emph{dimension} or \emph{gauge} functions is defined as
    \begin{equation*}
        \Phi:= \left\{
        \begin{array}{ll}
            \phi:\R_{\geq 0}\to\R_{\geq 0} & \mathrm{ :\ } \phi(t)>0 \text{ if } t>0,
            \phi(0)=0,                                                                            \\
                                        & \mathrm{   } \text{ increasing and right-continuous}
        \end{array}
        \right\}.
    \end{equation*}
\end{definition}
This set is partially ordered, with the order defined by
\[
    \phi_2 \prec \phi_1 \text{ if } \lim_{x \to 0^+} \frac{\phi_1(x)}{\phi_2(x)} =0.
\]

\begin{definition}
    The outer \emph{Hausdorff measure} associated with $\phi\in \Phi$ is
    \[
        \mathcal{H}^\phi(E):=\lim_{\delta\to 0} \inf \left\{ \sum_{i}\phi(\diam(U_i)) : \diam(U_i)<\delta, \, E\subset\bigcup_i U_i  \right\}.
    \]
\end{definition}
For any $\phi \in \Phi$,  $\mathcal{H}^\phi$ is an outer measure and a Borel-regular measure. This definition generalizes the outer $\alpha$-dimensional Hausdorff measure, which corresponds to the particular case $\phi(x):=x^{\alpha}$. The order relation implies that $x^s \prec x^t$ if and only if $s<t$.

A \emph{packing} of a set $E$ is a collection of disjoint balls with centres in $E$. A $\delta$-packing of $E$ is a packing in which all radii are less than $\delta$.
\begin{definition}
    Let $\phi\in \Phi$. The \emph{packing pre-measure} associated with $\phi$ is defined as
    \[
        \mathcal{P}_0^\phi(E):=\lim_{\delta\to 0} \sup \left\{\sum_i \phi(r_i): B(x_i,r_i) \text{ is a $\delta$-packing of $E$} \right\}.
    \]
    The outer \emph{packing measure} associated with $\phi$ is defined as
    \[
        \mathcal{P}^\phi(E):=\inf \left\{ \sum_i \mathcal{P}_0^\phi(E_i): E\subset \cup_i E_i \right\}.
    \]
\end{definition}
The packing pre-measure is monotone, but it is not an outer measure, hence why an extra step is needed in the definition of packing measure. The packing outer measure is an outer measure and a Borel-regular measure. In Euclidean space, the following inequality between Hausdorff and packing measure holds:
\begin{lemma}
    Suppose $\phi\in \Phi$ satisfies $\phi(t) \le C \phi(t/2)$ for some constant $C>0$ and all $t>0$. Then, $\mathcal{H}^\phi(A)\le \mathcal{P}^\phi(A)$ for all sets $A\subset\R^d$.
\end{lemma}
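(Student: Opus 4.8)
The plan is to reduce the lemma to the pointwise inequality $\mathcal{H}^\phi(E)\le\mathcal{P}_0^\phi(E)$ for every closed set $E\subseteq\R^d$. This reduction is soft: one uses Borel-regularity of both $\mathcal{H}^\phi$ and $\mathcal{P}^\phi$ to pass to Borel (then closed) sets, the identity $\mathcal{P}_0^\phi(\overline{E})=\mathcal{P}_0^\phi(E)$, countable subadditivity of the outer measure $\mathcal{H}^\phi$, and the definition of $\mathcal{P}^\phi$ as an infimum of sums $\sum_j\mathcal{P}_0^\phi(E_j)$ over countable covers. So fix a closed $E$ with $\mathcal{P}_0^\phi(E)<\infty$. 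The one elementary fact about the pre-packing measure I would use is that $\delta\mapsto\sup\{\sum_i\phi(r_i):\{B(x_i,r_i)\}_i\text{ is a }\delta\text{-packing of }E\}$ is nonincreasing with infimum $\mathcal{P}_0^\phi(E)$, so for every $M>\mathcal{P}_0^\phi(E)$ there is $\delta_0>0$ such that every packing of $E$ by balls of radius below $\delta_0$ has gauge sum at most $M$.

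Next I would record an a priori finiteness estimate by a crude Vitali (maximal-packing) argument — this is the one point where the doubling hypothesis $\phi(t)\le C\phi(t/2)$ is used. For $\delta<\delta_0$, choose a maximal pairwise-disjoint family $\{B(x_i,\delta)\}_i$ of balls with centres $x_i\in E$ (countable, as disjoint balls of fixed radius in $\R^d$ are). By maximality, every point of $E$ is within $2\delta$ of some $x_i$, so the enlarged balls $B(x_i,2\delta)$ cover $E$; since $\{B(x_i,\delta)\}_i$ is a packing of $E$ by balls of radius below $\delta_0$, we get $\mathcal{H}^\phi_{4\delta}(E)\le\sum_i\phi(4\delta)\le C^2\sum_i\phi(\delta)\le C^2M$, the middle step iterating the doubling inequality. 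Letting $\delta\to0$ and then $M\downarrow\mathcal{P}_0^\phi(E)$ gives $\mathcal{H}^\phi(E)\le C^2\,\mathcal{P}_0^\phi(E)<\infty$; in particular the restriction of $\mathcal{H}^\phi$ to $E$ is a finite Borel — hence Radon — measure $\mu$ on $\R^d$.

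Finally, the sharp inequality comes from a density comparison, which I expect to be the main step. A standard Vitali covering argument for the Radon measure $\mu$ shows that the upper $\phi$-density $\limsup_{r\to0}\mu(B(x,r))/\phi(2r)$ is at most $1$ at $\mu$-a.e.\ point. Fixing $\e,\delta>0$, for $\mu$-a.e.\ $x\in E$ there is $\delta(x)>0$ with $\mu(B(x,r))\le(1+\e)\phi(2r)$ whenever $r<\min(\delta,\delta(x))$, and these balls constitute a fine cover of a full-$\mu$-measure subset of $E$; by the Vitali covering theorem there is a countable pairwise-disjoint subfamily $\{B(x_i,r_i)\}_i$ with $\mu\bigl(E\setminus\bigcup_i B(x_i,r_i)\bigr)=0$. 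Then $\mathcal{H}^\phi(E)=\sum_i\mu(B(x_i,r_i))\le(1+\e)\sum_i\phi(2r_i)$, and since $\{B(x_i,r_i)\}_i$ is a $\delta$-packing of $E$, the right-hand side is at most $(1+\e)$ times the scale-$\delta$ pre-packing quantity, hence at most $(1+\e)\,\mathcal{P}_0^\phi(E)$ after letting $\delta\to0$; then $\e\to0$ finishes. The crux is precisely this step — matching a \emph{disjoint} packing against a cover that exhausts $E$ with no loss in the constant — which is exactly why one passes through the density theorem rather than through the crude enlargement of the second paragraph (that cruder bound only gives $\mathcal{H}^\phi\le C^2\mathcal{P}^\phi$, the stray power of $C$ being the price of the $2\delta$-enlargement), the doubling hypothesis being needed only to secure the finiteness that makes $\mu$ Radon.
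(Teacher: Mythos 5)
The paper itself does not prove this lemma; it only cites Cutler (1995) for it. Your proof follows the standard route to the inequality: a crude maximal-packing argument, with the doubling hypothesis absorbing the $2\delta$-enlargement, gives $\mathcal{H}^\phi(E)\le C^2\,\mathcal{P}_0^\phi(E)$ and hence finiteness; then the upper $\phi$-density theorem for the finite Radon measure obtained by restricting $\mathcal{H}^\phi$ to $E$, combined with the Vitali covering theorem for Radon measures, converts a disjoint fine packing into a near-cover with no constant loss. This is exactly the structure of the argument in the cited reference, and the outline is sound.

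Two caveats are worth raising. First, throughout your density step you compare $\sum_i\phi(2r_i)$ with the pre-packing quantity; this uses the convention, standard in the literature and the one under which the lemma is true, that $\mathcal{P}_0^\phi$ sums $\phi$ of the \emph{diameters} of the packing balls. The paper's displayed definition instead sums $\phi(r_i)$, and with that convention the lemma as stated is simply false: take $\phi(t)=t$ and $A=[0,1]$ in $\R$, where $\mathcal{H}^\phi(A)=1$ while $\mathcal{P}^\phi(A)\le\mathcal{P}_0^\phi(A)=1/2$. So the paper's definition is a misprint, and you have, correctly if tacitly, worked with the intended one. Second, your reduction to closed sets through $\mathcal{P}_0^\phi(\overline{E})=\mathcal{P}_0^\phi(E)$ is delicate for gauge functions that are only right-continuous: pushing the centres of a packing of $\overline{E}$ into $E$ forces a shrinkage of the radii, and one then wants $\phi\bigl(2(r-\varepsilon)\bigr)\to\phi(2r)$, which requires left-continuity. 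It is simpler, and costs nothing, to skip that reduction: for an arbitrary $E$ with $\mathcal{P}_0^\phi(E)<\infty$, the crude bound already gives $\mathcal{H}^\phi(E)<\infty$, and one may run the density and Vitali argument with $\mathcal{H}^\phi$ restricted to a Borel ($G_\delta$) hull of $E$ of equal $\mathcal{H}^\phi$-measure, noting that the Vitali covering theorem for Radon measures applies to fine covers of arbitrary, not necessarily measurable, subsets, so the extracted disjoint family still has centres in $E$ and therefore still constitutes a packing of $E$.
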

See \cite{Cutler95} for proofs of these facts and further background on packing measures.

\subsection{A notion of largeness}

We introduce a notion of largeness of a set which, as we will see shortly, is closely related to both Hausdorff and packing measures, but different from either. Let $|A|_{\delta}$ denote the $\delta$-covering number of $A$, that is, the smallest number of $\delta$-balls needed to cover $A$. Recall that the $\delta$-packing number of $A$, denoted by $|A|_{\delta}^{\text{pack}}$, defined as the largest number of disjoint $\delta$-balls with centres in $A$, satisfies
\begin{equation} \label{eq:packing-covering}
      C_d^{-1} |A|_{\delta} \le |A|_{2\delta} \le |A|_{\delta}^{\text{pack}} .
\end{equation}
Indeed, if $\{ B(x_i,\delta) \}$ is a maximal packing of $A$, then $\{ B(x_i,2\delta)\}$ is a covering of $A$ (otherwise, we could add $B(x,\delta)$ to the packing, for $x\in A\setminus \cup_i B(x_i,\delta)$). On the other hand, a ball of radius $2\delta$ can be covered by $C_d$ balls of radius $\delta$, where $C_d$ is a constant depending only on the dimension $d$ (for example, $C_d=5^d$ can be checked to work).

\begin{definition} \label{def:large-set}
    Let $A\subset\R^d$ be a Borel set and let $N(\delta):(0,1]\to\N$ be a function such that
    \[
        \lim_{\delta\to 0^{+}} N(\delta) = \infty.
    \]

    We say that $A$ is \emph{$N$-large} if there exists a compact set $A'\subset A$ such that for every cube $Q$ such that $A'\cap Q^{\circ}\neq\emptyset$,
    \[
       \limsup_{\delta\to 0^{+}} \frac{|A'\cap Q^{\circ}|_{\delta}}{N(\delta)} \ge 1.
    \]

    If $\delta_k, N_k$ are sequences in $(0,1]$ and $\N$ converging to $0$ and $\infty$ respectively, we say that $A$ is \emph{$(N_k,\delta_k)$-uniformly large} if it contains a compact set $A'$ with the following property:
    \[
        |A'\cap Q|_{\delta_k} \ge N_k
    \]
    for all cubes $Q\in\mathcal{D}_k(A')$.
\end{definition}

Both the statement and the proof of the following lemma are closely related to (the proof of) Frostman's Lemma.
\begin{lemma} \label{lem:large-Hausdorff-packing}
    Let $A\subset\R^d$ be a Borel set, and let $\phi\in\Phi$.
    \begin{enumerate}[label=\textup{(\roman*)}]
        \item  \label{it:assumption-H} If $\mathcal{H}^\phi(A)>0$, then $A$ is $N$-large for any function $N$ such that
        \[
            \limsup_{\delta\to 0^+} \phi(\delta) \cdot N(\delta)=\infty.
        \]
        \item \label{it:assumption-P} If $\mathcal{P}^\phi(A)>0$, then $A$ is $N$-large for
        \[
            N(\delta) = \frac{1}{\log (2\delta^{-1})\log^2\log(2\delta^{-1})\cdot \phi(4\delta)},
        \]
        provided that $\lim_{\delta\to 0^+}N(\delta)=\infty$.
    \end{enumerate}
\end{lemma}
\begin{proof}
    Without loss of generality, $A$ is compact and contained in $[0,1]^d$. Let $\mathcal{X}\in\{\mathcal{H},\mathcal{P}\}$. Let $k$ be the smallest dimension for which there is a $k$-affine plane $V$ such that $\mathcal{X}^\phi(A\cap V)>0$. Replacing $\R^d$ by $V$ and $A$ by $A\cap V$, we may assume that $\mathcal{X}^\phi(V)=0$ for all proper affine planes $V$.

    Let $\mathcal{D}^{\mathcal{X}}(A)$ denote the family of dyadic cubes $Q$ such that $\mathcal{X}^\phi(A\cap Q)>0$. Note that, by our assumption, $\mathcal{X}^\phi(A\cap Q^{\circ})=\mathcal{X}^\phi(A\cap \overline{Q})$. Let $A'_m$ be the union of the closure of the cubes in $\mathcal{D}^{\mathcal{X}}(A)$ of side length $2^{-m}$. Note that $A'_m$ is decreasing in $m$, and set
    \[
       A' = \bigcap_{m=1}^\infty A'_m.
    \]

    Let $Q$ be a cube such that $A'\cap Q^{\circ}\neq \emptyset$. By construction, we have $\mathcal{X}^\phi(A'\cap Q^{\circ})>0$.
    

    We consider the case $\mathcal{X}=\mathcal{H}$ first. Let $\{ B(x_i,\delta)\}_{i\in I}$ be a cover of $A'\cap Q$ by balls of radius $\delta$. Then, by definition of Hausdorff measures, if $\delta$ is small enough we have
    \[
        |I|\cdot \phi(\delta) \ge \frac{1}{2}\mathcal{H}^\phi(A'\cap Q^{\circ}) =: c_Q > 0.
    \]
    On the other hand, by the assumption \ref{it:assumption-H}, there is a sequence $\delta_j\to 0$ such that $\phi(\delta_j) N(\delta_j) >c_Q$ for all $j$. Thus, for $j$ large enough we have
    \[
     |A'\cap Q^{\circ}|_{\delta_j} \ge \frac{c_Q}{\phi(\delta_j)}  \ge N_j.
    \]
    This yields the first claim.

    We turn to packing measures. Suppose $A' \cap Q^{\circ} \neq \emptyset$. Then, by construction of $A'$ and the definition of packing pre-measure,
    \[
        \mathcal{P}_0^\phi(A'\cap Q^{\circ}) \ge \mathcal{P}^\phi(A'\cap Q^{\circ})>0.
    \]
    Let $k_0$ be chosen so that
    \[
        \sum_{k=k_0}^\infty \frac{1}{k\log^2 k} < \frac{1}{2}\mathcal{P}_0^\phi(A'\cap Q^{\circ}).
    \]
    By the definition of packing pre-measure, if $k_0$ is large enough then there exists a $2^{-k_0}$-packing $\{ B(x_i,r_i) \} $ of $A'\cap Q^{\circ}$ such that
    \[
        \sum_{i} \phi(r_i) \ge \frac{1}{2}\mathcal{P}_0^\phi(A'\cap Q^{\circ})>0.
    \]
    Let $I_k = \{ i: r_i \in [2^{-k}, 2^{-k+1}) \}$. By the pigeonhole principle, there exists $k\ge k_0$ such that
    \[
        |I_k| \cdot \phi(2^{1-k}) \ge \sum_{i\in I_k} \phi(r_i) \ge \frac{1}{k\log^2 k}.
    \]
    Since the points $x_i, i \in I_k$ are $2^{-k}$-separated, a cover of $\{x_i\}_{i\in I_k}$ by balls of radius $2^{-k-1}$ has cardinality $\ge |I_k|$.  We conclude that
    \[
        |A'\cap Q^{\circ}|_{2^{-k-1}} \ge |I_k|\ge \frac{1}{k\log^2 k\cdot  \phi(2^{1-k})},
    \]
    yielding the claim.
\end{proof}

The following refinement of the first part of the above lemma will be useful later.
\begin{lemma} \label{lem:Hausdorff-unif-large}
    Let $\phi\in\Phi$, let $\eta>0$, and let $\delta_k$ be a sequence such that
    \[
        \lim_{k\to\infty} 2^{3kd+1} \phi(\delta_k) = 0.
    \]
    Then, any set $A\subset [0,1]^d$ with $\mathcal{H}^\phi(A)\ge \eta$ is $(N_k,\delta_k)$-uniformly large for
    \[
      N_k= \frac{\eta}{2^{3kd+1} \cdot \phi(\delta_k)}.
    \]
\end{lemma}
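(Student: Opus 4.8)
The plan is to mimic the construction of the compact subset $A'$ from the proof of Lemma \ref{lem:large-Hausdorff-packing}(i), but with two key differences: since here we only require the covering estimate at the single scale $\delta_k$ (not a $\limsup$), we can afford to pass to a fixed intermediate dyadic scale $2^{-k}$; and since $\mathcal{H}^\phi(A)\ge\eta$ is a quantitative lower bound, we can keep explicit track of the constants. First I would observe that, unlike in Lemma \ref{lem:large-Hausdorff-packing}, we should \emph{not} pass to a lower-dimensional affine subspace, because the bound $N_k=\eta/(2^{3kd+1}\phi(\delta_k))$ already absorbs the loss; instead I would work directly in $[0,1]^d$ and simply take $A' = A$ together with the family $\mathcal{D}_k(A)$ of dyadic cubes of side $2^{-k}$ meeting $A$.

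The core estimate is a mass-distribution/pigeonhole argument at scale $2^{-k}$. Fix $k$ and a cube $Q\in\mathcal{D}_k(A)$. Since the number of dyadic cubes of side $2^{-k}$ inside $[0,1]^d$ is $2^{kd}$, and subadditivity of $\mathcal{H}^\phi$ over this finite partition gives $\sum_{Q\in\mathcal{D}_k([0,1]^d)} \mathcal{H}^\phi(A\cap Q) \ge \mathcal{H}^\phi(A) \ge \eta$, one \emph{cannot} directly conclude a lower bound on a single $Q$. The fix is to throw away the cubes $Q$ with $\mathcal{H}^\phi(A\cap Q)$ too small: define $A' := A \cap \bigcup\{Q : Q\in\mathcal{D}_k(A),\ \mathcal{H}^\phi(A\cap Q) \ge \eta\, 2^{-kd}\}$ — but of course this $A'$ depends on $k$, which conflicts with the statement's requirement of a single compact $A'$ working for all cubes in $\mathcal{D}_k(A')$. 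So the right move is instead to iterate this truncation across \emph{all} scales $m$ simultaneously, exactly as in Lemma \ref{lem:large-Hausdorff-packing}: let $\mathcal{D}^{\mathcal{H}}_m(A)$ be the dyadic cubes $Q$ of side $2^{-m}$ with $\mathcal{H}^\phi(A\cap Q)>0$, set $A'_m$ to be the union of their closures, and $A' = \bigcap_m A'_m$. One checks $\mathcal{H}^\phi(A')\ge\eta$ (no mass is lost, since the removed cubes carry zero $\mathcal{H}^\phi$-measure of $A$), and that every cube $Q$ with $Q\cap A'\neq\emptyset$ contains some dyadic subcube $Q'$ with $\mathcal{H}^\phi(A\cap Q')>0$, hence (by a Vitali-type finite-partition argument at a deeper scale, or by outer regularity) one obtains $\mathcal{H}^\phi(A'\cap Q)>0$ for all such $Q$.

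Now fix $k$ and $Q\in\mathcal{D}_k(A')$. Partition $Q$ into the $2^{2kd}$ dyadic subcubes of side $2^{-3k}$; by subadditivity at least one such subcube $R$ satisfies $\mathcal{H}^\phi(A'\cap R) \ge \mathcal{H}^\phi(A'\cap Q)\cdot 2^{-2kd}$. Iterating this observation from scale $1$ down to scale $2^{-k}$ inside $[0,1]^d$ shows there is at least one cube $Q_0\in\mathcal{D}_k(A')$ with $\mathcal{H}^\phi(A'\cap Q_0)\ge \eta\, 2^{-2kd}$, but we need the estimate for \emph{every} $Q\in\mathcal{D}_k(A')$, so the clean way is: since $\mathcal{H}^\phi(A'\cap Q)>0$ for each such $Q$, and by construction $A'\cap Q = A'_{k}\cap\cdots$, a direct counting of dyadic cubes gives the crude but sufficient bound $\mathcal{H}^\phi(A'\cap Q) \ge \eta\, 2^{-3kd}$ once one tracks that each retained cube at scale $2^{-k}$ must carry at least a $2^{-3kd}$-fraction — this is where the exponent $3kd$ in the statement comes from, and it is the main bookkeeping obstacle. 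Finally, given $Q\in\mathcal{D}_k(A')$, let $\{B(x_i,\delta_k)\}_{i\in I}$ be any cover of $A'\cap Q$ by balls of radius $\delta_k$; by the definition of $\mathcal{H}^\phi$ (and the hypothesis $\lim_k 2^{3kd+1}\phi(\delta_k)=0$, which forces $\delta_k$ small enough that the $\delta$-approximation to $\mathcal{H}^\phi$ has stabilized past the relevant threshold), we get $|I|\cdot\phi(\delta_k) \ge \tfrac12 \mathcal{H}^\phi(A'\cap Q) \ge \tfrac12\,\eta\,2^{-3kd}$, i.e. $|A'\cap Q|_{\delta_k} \ge \eta/(2^{3kd+1}\phi(\delta_k)) = N_k$, which is exactly $(N_k,\delta_k)$-uniform largeness. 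The main obstacle is verifying that the uniform (all-scales) truncation producing $A'$ simultaneously preserves a definite fraction of $\mathcal{H}^\phi$-mass in every surviving cube at every scale; once the $2^{3kd}$ loss is granted this is a routine dyadic pigeonhole.
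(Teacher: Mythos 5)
The overall structure you lay out — iterate a dyadic truncation to produce a single compact $A'$, then apply the cover argument inside each surviving cube — matches the paper. However, the crucial quantitative step is missing, and the argument as written does not close.

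The gap is exactly the one you flag as ``the main bookkeeping obstacle''. You propose to build $A'$ by the same truncation as in Lemma \ref{lem:large-Hausdorff-packing}: at each level keep only the cubes $Q$ with $\mathcal{H}^\phi(A\cap Q)>0$. This preserves total mass ($\mathcal{H}^\phi(A')\ge\eta$) and guarantees $\mathcal{H}^\phi(A'\cap Q)>0$ for every surviving cube, but it gives no uniform lower bound on $\mathcal{H}^\phi(A'\cap Q)$ over $Q\in\mathcal{D}_k(A')$. Positivity alone is not quantitative: already at level $k=1$ in $d=1$, $A$ could split its $\mathcal{H}^\phi$-mass as $\eta-\varepsilon$ on $[0,1/2]$ and $\varepsilon$ on $[1/2,1]$ with $\varepsilon$ arbitrarily tiny; both halves survive the zero-measure truncation, but $\mathcal{H}^\phi(A'\cap[1/2,1])=\varepsilon$, which can be far below $\eta\,2^{-3d}$. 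The pigeonhole you invoke only locates \emph{one} good cube per level, which you yourself note is insufficient, and the subsequent claim ``each retained cube at scale $2^{-k}$ must carry at least a $2^{-3kd}$-fraction'' is asserted but does not follow from anything built so far.

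The paper's proof fixes this by using a \emph{different} truncation: it discards, at level $k+1$, those cubes $Q$ with $\mathcal{H}^\phi(A_k\cap Q)\le \eta_{k+1}$ for an explicit decreasing threshold $\eta_{k+1}=2^{-3(k+1)d}\eta$. The point is a mass-budget estimate: a cube $Q\in\mathcal{D}_k(A')$ has $\mathcal{H}^\phi(A_k\cap Q)\ge\eta_k$ because it survived, $Q$ contains at most $2^{(\ell-k)d}$ subcubes at level $\ell>k$, and each removed subcube at level $\ell$ carries mass at most $\eta_\ell$, so the total mass lost inside $Q$ across all later levels is at most $\sum_{\ell>k}2^{(\ell-k)d}\eta_\ell\le\eta_k/2$. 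The exponent $3kd$ is chosen precisely so that $2^{\ell d}\eta_\ell\sim 2^{-2\ell d}$ is geometrically summable with room to spare, forcing $\mathcal{H}^\phi(A'\cap Q)\ge\eta_k/2$ for \emph{every} surviving $Q$, which is what the cover argument at the end needs. Your proposal never introduces these thresholds, and without them the lower bound on each $\mathcal{H}^\phi(A'\cap Q)$ — and hence the $N_k$ covering estimate — is unsupported.
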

\begin{proof}
    Let $\eta_k=2^{-3kd}\eta$, and note that
    \[
        \sum_{\ell=k+1}^\infty 2^{(\ell-k)d}\eta_{\ell} = 2^{-3kd}\eta \sum_{\ell=1}^\infty 2^{-2d\ell} \le \frac{\eta_{k}}{2}, \quad k\ge 0.
    \]

    We construct a nested sequence of compact sets $A_k$ as follows.
    Let $A_0=A$. Once $A_k$ has been constructed, let $A_{k+1}$ be the union of the (closed) cubes in $\mathcal{D}_{k+1}(A_k)$ such that $\mathcal{H}^\phi(A_k\cap Q)> \eta_{k+1}$. Let $A'=\bigcap_{k=1}^\infty A_k$.

    Let $Q\in\mathcal{D}_k(A')$, $k\in\N_0$. Then, $\mathcal{H}^\phi(A_k\cap Q)\ge \eta_k$. Since $A_k$ intersects $\le 2^{(\ell-k)d}$ cubes of side length $2^{-\ell}$ for $\ell>k$, and each of the cubes removed from $A_{\ell}$ has $\mathcal{H}^\phi$-measure at most $\eta_{\ell}$, we have
    \[
        \mathcal{H}^\phi(A'\cap Q) \ge \eta_k - \sum_{\ell=k+1}^\infty 2^{(\ell-k)d}\eta_{\ell} \ge \frac{\eta_k}{2}.
    \]
    Now if $\{ B(x_i,\delta_k)\}_{i\in I}$ is a cover of $A'\cap Q$ by balls of radius $\delta_k$, where $Q\in\mathcal{D}_k$ and $A'\cap Q\neq \emptyset$, we have
    \[
        2^{-3kd-1}\eta \le \mathcal{H}^\phi(A'\cap Q) \le |I|\cdot \phi(\delta_k).
    \]
    Thus, $|I|\ge 2^{-3kd-1}\eta/\phi(\delta_k)$, and the claim follows.
\end{proof}

\section{Non-empty interior of sumsets and full measure universality}

\label{sec:nonempty-interior}

\subsection{A general inductive approach}

In this section we derive a general approach to proving that sumsets have nonempty interior, which is inspired by \cite{Talagrand76, EKM81}.
\begin{definition}
    A \emph{rectangle} in $\R^d$ is a set of the form $Q=\prod_{i=1}^d [a_i,b_i]$, where $a_i,b_i\in\R$ and $a_i<b_i$ for all $i$.

    If $R$ is a rectangle, $CR$ denotes the rectangle with the same centre and side lengths equal to $C$ times the corresponding side lengths of $R$.

    A family of rectangles is \emph{non-overlapping} if the interiors of the rectangles are pairwise disjoint.

    An \emph{elementary set} in $\R^d$ is a finite union of rectangles.
\end{definition}

\begin{definition}
    Let $A$ be a compact subset of $\R^d$ and let $\mathcal{F}$ be a family of functions from $A$ to $\R^d$.
    We say that $(A,\mathcal{F})$ has the \emph{recursive rectangles property} or $\RRP$ if there exists a rectangle $R$ (called the initial rectangle) such that following holds.

    For every rectangle $Q\subset R$, for every $a\in A$, and for every $\e>0$, there exist a finite subset $A'\subset A$ and an elementary set $T\subset 3Q$ such that
    \begin{enumerate}[label=\textup{(\roman*)}]
        \item $f(a)+Q\subset f(A')+T$ for all $f\in\mathcal{F}$.
        \item $|T|\le \e |Q|$.
    \end{enumerate}
\end{definition}

\begin{lemma} \label{lem:nonemptyinterior}
    Suppose $(A,\mathcal{F})$ has the $\RRP$ property with initial rectangle $R$. Then for every $a_0\in A$ there is a compact set $K$ with $|K|=0$ such that
    \begin{equation} \label{eq:nonemptyinterior}
        f(a_0)+R\subset f(A)+K \quad\text{for all } f\in\mathcal{F}.
    \end{equation}
\end{lemma}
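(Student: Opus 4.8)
The plan is to construct, recursively in $n$, a finite set $\mathcal{A}_n\subset A$ and an elementary set $T_n$ (a finite union of closed rectangles) such that
\[
    f(a_0)+R\ \subset\ f(\mathcal{A}_n)+T_n\ \subset\ f(A)+T_n\qquad\text{for every }f\in\mathcal{F},
\]
with $|T_n|\to 0$, with all the $T_n$ confined to a fixed bounded region, and with $T_{n+1}\subset T_n^{(\rho_n)}$ for a sequence $\rho_n\downarrow 0$ that we may shrink as fast as we please. Granting this, the set
\[
    K:=\bigcap_{n\geq 1}\ \overline{\bigcup_{m\geq n}T_m}
\]
does the job: it is closed and bounded, hence compact. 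If, at step $n$, we choose $\rho_n\le\rho_{n-1}/2$ and small enough that $|T_n^{(3\rho_n)}|\le |T_n|+2^{-n}|R|$ (possible since $T_n$ is compact, so $|T_n^{(\e)}|\downarrow |T_n|$ as $\e\downarrow 0$), then $T_m\subset T_n^{(\sum_{k\geq n}\rho_k)}\subset T_n^{(2\rho_n)}$ for all $m\ge n$, so $K\subset \overline{T_n^{(2\rho_n)}}\subset T_n^{(3\rho_n)}$; combined with $|T_n|\le 2^{-n}|R|$ (checked below) this gives $|K|\le 2^{1-n}|R|$ for all $n$, i.e.\ $|K|=0$. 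Finally \eqref{eq:nonemptyinterior} holds: given $x\in f(a_0)+R$, for each $n$ write $x=f(a_n)+t_n$ with $a_n\in A$, $t_n\in T_n$; by compactness of $A$ pass to a subsequence with $a_n\to a\in A$; by continuity of $f$, $t_n=x-f(a_n)\to x-f(a)=:t$; since $t_n\in T_n$ and eventually $n\geq N$, we get $t\in\overline{\bigcup_{m\geq N}T_m}$ for every $N$, hence $t\in K$ and $x=f(a)+t\in f(A)+K$.

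For the recursion, start with $\mathcal{A}_0=\{a_0\}$, $T_0=R$. Given $\mathcal{A}_n$ and $T_n$, set $\e_n:=1/(2\#\mathcal{A}_n)$ and subdivide $T_n$ into a finite non-overlapping family $\mathcal{S}_n$ of rectangles of diameter less than $\rho_n/3$. For each $a'\in\mathcal{A}_n$ and each $S\in\mathcal{S}_n$, apply the $\RRP$ to the rectangle $S$, the point $a'$, and $\e_n$, obtaining a finite $A'_{a',S}\subset A$ and an elementary $T_{a',S}\subset 3S$ with $f(a')+S\subset f(A'_{a',S})+T_{a',S}$ for all $f\in\mathcal{F}$ and $|T_{a',S}|\le\e_n|S|$. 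Put $\mathcal{A}_{n+1}:=\bigcup_{a',S}A'_{a',S}$ (finite) and $T_{n+1}:=\bigcup_{a',S}T_{a',S}$ (elementary). Since $f(\mathcal{A}_n)+T_n=\bigcup_{a'\in\mathcal{A}_n,\,S\in\mathcal{S}_n}(f(a')+S)$ and $\bigcup_i(X_i+Y_i)\subset(\bigcup_iX_i)+(\bigcup_iY_i)$, the inductive inclusion is preserved: $f(a_0)+R\subset f(\mathcal{A}_{n+1})+T_{n+1}$ for all $f\in\mathcal{F}$. As the rectangles of $\mathcal{S}_n$ tile $T_n$, $|T_{n+1}|\le\#\mathcal{A}_n\cdot\e_n\cdot|T_n|=\tfrac12|T_n|$, so $|T_n|\le 2^{-n}|R|$; and since $3S$ has diameter $<\rho_n$ and contains $S$, each $T_{a',S}\subset 3S\subset S^{(\rho_n)}$, whence $T_{n+1}\subset T_n^{(\rho_n)}$. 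In particular $T_n\subset R^{(2\rho_0)}$ for all $n$, which is the required confinement.

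The one genuinely delicate point — and the main obstacle — is that the $\RRP$ only supplies the covering property for rectangles $Q\subset R$, whereas already $T_1\subset 3R$, so in later steps some tiny rectangles $S\in\mathcal{S}_n$ stick out of $R$. This is handled using the translation invariance of $\mathcal{F}$: for such an $S$, choose a vector $w$ with $S-w\subset R$ (possible once $S$ is small), apply the $\RRP$ to $Q:=S-w$ to get $A'_{a',S}$ and $T''\subset 3(S-w)$ with $f(a')+(S-w)\subset f(A'_{a',S})+T''$ for all $f\in\mathcal{F}$, and then apply this with $f$ replaced by the admissible map $x\mapsto f(x)+w$; since $3(S-w)+w=3S$, the set $T_{a',S}:=w+T''$ satisfies $T_{a',S}\subset 3S$ and $f(a')+S\subset f(A'_{a',S})+T_{a',S}$, so the step above goes through verbatim and, crucially, nothing escapes to infinity. (Alternatively, running the recursion from $(1-\eta)R$ keeps every rectangle inside $R$ throughout, at the cost of proving \eqref{eq:nonemptyinterior} only for $(1-\eta)R$; one then recovers $R$ by covering it with finitely many translates of $(1-\eta)R$ and invoking translation invariance once more.)
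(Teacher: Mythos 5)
Your proof is correct and follows essentially the same inductive construction as the paper: subdivide the current elementary set into small rectangles, apply $\RRP$ to each rectangle against each point of the current finite approximant of $A$, union the outputs, and take a limiting compact set. Your $K=\bigcap_{n}\overline{\bigcup_{m\ge n}T_m}$ is the topological $\limsup$, whereas the paper takes $\bigcap_j K_j^{(\Delta_j)}$ for a tail sum of neighbourhood radii $\Delta_j$; these are interchangeable, and your choice of $\rho_n$ small enough that $|T_n^{(3\rho_n)}|\le |T_n|+2^{-n}|R|$ plays the same role as the paper's bound $|K_j^{(2\delta_j)}|\le 2^{-j}$. You also spell out the compactness argument (pass to a convergent subsequence $a_n\to a$, use continuity of $f$, conclude $t=\lim t_n\in K$) that the paper leaves implicit in the sentence ``and therefore \eqref{eq:nonemptyinterior} holds''; this argument implicitly uses that each $f\in\mathcal{F}$ is continuous (equivalently, that $f(A)$ is compact), a hypothesis both you and the paper treat as understood.

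Where you genuinely go beyond the paper is in flagging the confinement issue: the paper asserts $K_j\subset R$, but since $T_{s,a}\subset 3Q_s$ can stick out of $R$ near $\partial R$, already $K_1\not\subset R$ in general, and then $\RRP$ (as stated only for $Q\subset R$) does not literally apply at the next step. This is a real gap in the paper's write-up, and your fix is sound. One small remark: your repair invokes translation-invariance of $\mathcal{F}$ (replacing $f$ by $x\mapsto f(x)+w$), but this extra hypothesis is not actually needed and is not part of the $\RRP$ definition. The cheaper observation is that the $\RRP$ conclusion is automatically translation-covariant: if $\RRP$ gives $(A',T'')$ for the rectangle $S-w\subset R$, then adding $w$ to \emph{both sides} of $f(a')+(S-w)\subset f(A')+T''$ immediately yields $f(a')+S\subset f(A')+(T''+w)$ with $T''+w\subset 3(S-w)+w=3S$, for the same $f$, with no assumption on $\mathcal{F}$. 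Your alternative fix (run the recursion inside $(1-\eta)R$ and then patch with finitely many translates) is also fine. Either way, the proof is complete and in fact slightly more careful than the one in the paper.
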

\begin{proof}

    Let $a_0\in A$ be arbitrary. Recall that $K^{(\delta)}$ denotes the $\delta$-neighbourhood of $K$. We will inductively construct a sequence of elementary sets $K_j\subset R$, numbers $0<\delta_j\le 2^{-j}$, and finite sets $A'_j\subset A$ such that, for all $j\ge 0$,
    \begin{enumerate}[label=(\alph*)]
        \item \label{it:a-NEI} $K_{j+1}\subset K_j^{(\delta_j)}$,
        \item \label{it:b-NEI} $|K_{j}|\le 5^{-d} 2^{-j}$, and $\displaystyle{\left|K_{j}^{(2\delta_j)}\right|\le  2^{-j}}$,
        \item \label{it:c-NEI} $f(a_0) +R\subset f(A'_j)+K_j$ for all $f\in\mathcal{F}$.
    \end{enumerate}
    If we let $\Delta_{j}=\sum_{\ell=j}^\infty\delta_{\ell}$, we see from \ref{it:a-NEI} that the sequence of compact sets $K_j^{(\Delta_j)}$ is decreasing. Let
    \[
        K = \bigcap_{j=1}^\infty K_j^{(\Delta_j)}.
    \]
    Since $\delta_j\le 2^{-j}$, we have $\Delta_j\le 2\delta_j$, and it follows from \ref{it:b-NEI} that $|K|=0$. Finally, we get from \ref{it:c-NEI} that $f(a_0)+R \subset f(A)+K_j$ for all $j$ and $f\in\mathcal{F}$, and therefore \eqref{eq:nonemptyinterior} holds.

    Set $K_0=R$, and $A'_0=\{a_0\}$ to begin the construction. The parameter $\delta_j$ will be defined in the step $j+1$; we will verify the second part of \ref{it:b-NEI} for $j$ in the step $j+1$ as well.

    Suppose $K_j$, $\delta_j$, $A'_j$ have been constructed. We can write $K_j= \cup_{s\in S} Q_s$, where $S$ is a finite set and the $Q_s$ are non-overlapping rectangles. By further splitting each $Q_s$ into many sufficiently small nearly congruent pieces, we may assume that, for some number $\delta_{j+1}\le 2^{-(j+1)}$, all elements of $Q_s$ are contained in a cube of diameter $\delta_{j}$ and contain a cube of diameter $\delta_{j}/2$.

    By the $\RRP$, for each $s\in S$ and $a\in A'_j$, there are an elementary set $T_{s,a}$ and a finite set $A'_{s,a}\subset A$ such that:
    \begin{enumerate}[label=(\Alph*)]
        \item \label{it:i-NEI} $T_{s,a}\subset 3 Q_s$,
        \item \label{it:ii-NEI} $f(a)+Q_s\subset f(A'_{s,a})+T_{s,a}$ for all $f\in\mathcal{F}$.
        \item \label{it:iii-NEI} $\displaystyle{|T_{s,a}|\le \frac{|Q_s|}{2\cdot 5^d\cdot |A'_j|}}$.
    \end{enumerate}

    Let
    \begin{align*}
        A'_{j+1} &= \bigcup_{a\in A'_j, s\in S} A'_{s,a}\\
        K_{j+1} &=\bigcup_{a\in A'_j, s\in S} T_{s,a}.
    \end{align*}
    Since $T_{s,a}\subset 3 Q_s$, we have
    \[
    K_{j+1}\subset \bigcup_{s\in S} 3 Q_s \subset K_j^{(\delta_j)}.
    \]
    Thus, \ref{it:a-NEI} holds for $j+1$.

    By the inductive assumption and \ref{it:ii-NEI}, for all $f\in\mathcal{F}$ we have
    \[
       f(a_0) + R \subset \bigcup_{a\in A'_{j}, s\in S} f(a)+Q_s \subset  \bigcup_{a\in A'_{j}, s\in S} f(A'_{s,a})+T_{s,a} \subset f(A'_{j+1})+K_{j+1}.
    \]
    This shows that \ref{it:c-NEI} holds for $j+1$.

    We have
    \[
        |K_{j+1}| \le \sum_{s\in S}\sum_{a\in A'_j} |T_{s,a}| \le \frac{1}{2\cdot 5^d} \sum_{s\in S} |Q_s| = \frac{1}{2}|K_j|\le 5^{-d}\cdot 2^{-j-1}.
    \]
    Thus, the first part of \ref{it:b-NEI} holds for $j+1$. Finally, since each $Q_s$ has diameter at least $\delta_j/2$, we see that  $K_{j}^{(2\delta_j)}$ is contained in $\bigcup_{s\in S} 5 Q_s$. Therefore,
    \[
    \left|K_{j}^{(2\delta_j)}\right| \le 5^d\sum_{s\in S}|Q_s| = 5^d |K_{j}| \le 2^{-j},
    \]
    using the inductive assumption. This verifies the second part of \ref{it:b-NEI} for $j$.
\end{proof}

\subsection{A combinatorial lemma and its continuous counterpart}

  The ideas in this section are related to \cite[Section 3]{KeletiMathe22}. The following combinatorial lemma is an adaptation from an argument of I.~Ruzsa \cite[Lemma 5.2]{Ruzsa16}.
\begin{lemma} \label{lem:sum-combinatorial}
    Let $N\in\mathbb{N}$, let $\mathcal{A}$ be a collection of subsets of $\Z_N^d$ and fix $\e>0$.

    If
    \[
        |A|> \frac{2}{\e}\log\left(|\mathcal{A}|\cdot N^d\right) \quad\text{for all }A\in\mathcal{A},
    \]
    then there exists $B\subset \Z_N^d$ with $|B|\le \e N^d$ such that
    \[
        A+B = \Z_N^d \quad\text{for all } A\in\mathcal{A}.
    \]
\end{lemma}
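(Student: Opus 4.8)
The natural approach is the probabilistic method: choose $B$ to be a random subset of $\Z_N^d$ where each element is included independently with a suitable probability $p$, and show that with positive probability $B$ is small and $A+B=\Z_N^d$ for every $A\in\mathcal{A}$ simultaneously. Concretely, I would set $p = \e/2$ (or something comparable), so that the expected size of $B$ is $pN^d = (\e/2)N^d$, comfortably below the target $\e N^d$. For the covering condition: fixing $A\in\mathcal{A}$ and $x\in\Z_N^d$, the event that $x\notin A+B$ is exactly the event that $B$ misses the translate $x-A$, which has probability $(1-p)^{|A|}\le e^{-p|A|}$. A union bound over the $N^d$ choices of $x$ and the $|\mathcal{A}|$ sets $A$ shows that
\[
    \mathbb{P}\bigl(\exists A\in\mathcal{A},\ A+B\ne\Z_N^d\bigr) \le |\mathcal{A}|\cdot N^d\cdot e^{-p|A|}.
\]
The hypothesis $|A| > \tfrac{2}{\e}\log(|\mathcal{A}|\cdot N^d) = \tfrac{1}{p}\log(|\mathcal{A}|\cdot N^d)$ forces $e^{-p|A|} < (|\mathcal{A}|\cdot N^d)^{-1}$, so this failure probability is strictly less than $1$.

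The remaining point is to guarantee that the size constraint holds together with the covering property. One clean way: condition on the covering event (which has probability $>0$), and argue that within that event there is a realization of $B$ of size at most $\e N^d$ — for instance, because the covering property is monotone (if $B$ works, so does any superset), one can instead run the argument so that it directly produces a small $B$. The simplest route is a two-sided estimate: by Markov's inequality $\mathbb{P}(|B| > \e N^d) \le \mathbb{E}|B|/(\e N^d) = p/\e \le 1/2$, while the failure of the covering property has probability $<1/2$ provided the constant in the hypothesis is taken as stated (one may need to absorb a harmless factor by slightly adjusting $p$ or noting the strict inequality gives room). Hence with positive probability neither bad event occurs, and such a $B$ exists.

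I expect no serious obstacle here; the only mild care needed is bookkeeping the constants so that the two union-bound/Markov estimates together stay below $1$ — this is why the hypothesis carries the factor $2/\e$ rather than $1/\e$, leaving slack to split between the size bound and the covering bound. A secondary cosmetic choice is whether to optimize $p$; taking $p=\e/2$ and accepting the slightly lossy constant is the least painful option and matches the statement.
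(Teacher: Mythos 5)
Your high-level plan (random $B$, first-moment or union-bound argument) is exactly the paper's, but the choice of random model creates a genuine gap that your hand-waving does not close. The paper samples $B$ \emph{uniformly among subsets of the fixed size} $\lfloor \e N^d\rfloor$, so the cardinality constraint is automatic and the entire argument reduces to showing
\[
\mathbf{E}\left|\bigcup_{A\in\mathcal{A}}\Z_N^d\setminus(A+B)\right| \le |\mathcal{A}|\cdot N^d\cdot \left(1-\tfrac{|A|}{N^d}\right)^{|B|}<1,
\]
using $\binom{N^d-|A|}{|B|}/\binom{N^d}{|B|}\le(1-|A|/N^d)^{|B|}$ and $|B|\ge \e N^d/2$. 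Your i.i.d.\ Bernoulli($p$) model, by contrast, must handle the event $\{|B|>\e N^d\}$ separately, and the two estimates you propose do not combine: with $p=\e/2$ you only get $\mathbf{E}[\#\text{uncovered pairs}]<1$, hence $\mathbf{P}(\text{cover fails})<1$ with no quantitative gap below $1$, while Markov gives $\mathbf{P}(|B|>\e N^d)\le 1/2$; a union bound then yields nothing. Bumping $p$ to $\e$ fixes the covering probability but lands you at $|B|\le 2\e N^d$, a factor of two worse than claimed. Note also that ``condition on the covering event'' cuts the wrong way: covering success is a monotone increasing event and $\{|B|\le \e N^d\}$ is monotone decreasing, so by Harris/FKG they are \emph{negatively} correlated, which works against you rather than for you. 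The slack created by the factor $2/\e$ in the hypothesis is exactly absorbed by the $|B|\ge \e N^d/2$ step in the paper's fixed-size argument; there is no leftover slack to split between two union-bound terms. The clean repair is precisely the paper's device: fix $|B|=\lfloor\e N^d\rfloor$ and compute one expectation.

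Everything else in your sketch is fine: the events, the exponential estimate $(1-|A|/N^d)^{|B|}\le e^{-|A||B|/N^d}$, and the observation that the hypothesis makes the exponent beat $\log(|\mathcal{A}|N^d)$. You only need to change the random model (or, alternatively, strengthen the hypothesis by an additive $\tfrac{2}{\e}\log 2$ term, which you are not allowed to do here).
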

\begin{proof}
    Write
    \[
        K = \frac{2}{\e}\log\left(|\mathcal{A}|\cdot N^d\right) .
    \]
    Let $B$ be random subset of $\mathbb{Z}_N^d$ of size $|B|=\lfloor \e N^d \rfloor\ge \e N^d/2$ (that is, $B$ is chosen uniformly among all such subsets). Then, for any fixed $A\in\mathcal{A}$ and $x\in \mathbb{Z}_N^d$, we have
    \begin{align*}
        \mathbf{P}(x \notin  A+B) &= \mathbf{P}\bigl(B\subset \mathbb{Z}_N^d\setminus (A-x)\bigr)  = {N^d-|A|\choose |B|}/{N^d\choose |B|} \\
        &\le \left(1-\frac{|A|}{N^d}\right)^{|B|} \le \left(1-\frac{|A|}{N^d}\right)^{\e N^d/2} < e^{-\e K/2},
    \end{align*}
    using the inequality $1-t\le e^{-t}$ applied with $t=|A|/N^d>K/N^d$. Thus,
    \[
        \mathbf{E}\left|\bigcup_{A\in\mathcal{A}}\mathbb{Z}_N^d\setminus (A+B) \right| \le \sum_{A\in\mathcal{A}} \mathbf{E}|\mathbb{Z}_N^d\setminus (A+B)| < |\mathcal{A}|\cdot N^d\cdot e^{-\e K/2} = 1,
    \]
    by the choice of $K$. Hence, there exists a realization of $B$ such that $|\Z_N^d\setminus (A+B)|=0$ for all $A\in\mathcal{A}$, as claimed.
\end{proof}

\begin{corollary}  \label{cor:sum-combinatorial}
    Let $N\in\mathbb{N}$, let $\mathcal{A}$ be a collection of subsets of $[N]^d$ and fix $\e>0$.

    If
    \[
        |A|> \frac{2}{\e}\log\left(|\mathcal{A}|\cdot N^d\right) \quad\text{for all }A\in\mathcal{A},
    \]
    then there exists $B\subset \{ -N,\ldots, N-1\}^d$ with $|B|\le 2^d \e N^d$, such that
    \[
        A+B \supseteq [N]^d \quad\text{for all } A\in\mathcal{A}.
    \]
\end{corollary}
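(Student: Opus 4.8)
The statement is a ``box'' version of Lemma~\ref{lem:sum-combinatorial}, in which the torus $\Z_N^d$ is replaced by the genuine box $[N]^d$; accordingly, one must draw $B$ from a slightly larger ambient set. The plan is to repeat the random choice of $B$ from the proof of Lemma~\ref{lem:sum-combinatorial}, with one elementary observation: if $A\subseteq[N]^d$ and $x\in[N]^d$, then the difference set $x-A$ is contained in $\{-(N-1),\dots,N-1\}^d\subseteq\{-N,\dots,N-1\}^d$. Hence, to decide whether $x\in A+B$ with ordinary integer addition, it suffices to let $B$ range over the box $\{-N,\dots,N-1\}^d$, which has $(2N)^d$ elements, and no wrap-around ever occurs.

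Concretely, I would set $m=\lfloor\e(2N)^d\rfloor$ (which we may assume is at least $1$, so that $m\ge\e(2N)^d/2$) and take $B$ uniformly at random among the $m$-element subsets of $\{-N,\dots,N-1\}^d$. For a fixed $A\in\mathcal{A}$ and $x\in[N]^d$, the event $x\notin A+B$ is exactly $B\cap(x-A)=\emptyset$, and since $x-A\subseteq\{-N,\dots,N-1\}^d$ has $|A|$ elements,
\[
  \mathbf{P}\bigl(x\notin A+B\bigr)=\binom{(2N)^d-|A|}{m}\Big/\binom{(2N)^d}{m}\le\Bigl(1-\tfrac{|A|}{(2N)^d}\Bigr)^{m}<e^{-|A|\e/2}.
\]
The hypothesis $|A|>\frac{2}{\e}\log(|\mathcal{A}|\,N^d)$ gives $e^{-|A|\e/2}<(|\mathcal{A}|\,N^d)^{-1}$, so $\mathbf{E}\,\bigl|[N]^d\setminus(A+B)\bigr|\le N^d e^{-|A|\e/2}<|\mathcal{A}|^{-1}$, and summing over $A\in\mathcal{A}$,
\[
  \mathbf{E}\Biggl|\bigcup_{A\in\mathcal{A}}\bigl([N]^d\setminus(A+B)\bigr)\Biggr|<1.
\]
Therefore some realization of $B$ satisfies $A+B\supseteq[N]^d$ for all $A\in\mathcal{A}$; this $B$ is a subset of $\{-N,\dots,N-1\}^d$ of size $m\le\e(2N)^d=2^d\e N^d$, as desired.

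I do not expect a genuine obstacle. The only point requiring any care is the inclusion $x-A\subseteq\{-N,\dots,N-1\}^d$: this is what makes integer addition behave like addition in a group of order $(2N)^d$, and it is also what keeps the constant equal to $2^d$ — one is asking to cover only the $N^d$ points of $[N]^d$ rather than all $(2N)^d$ points of the ambient box, and this weakening exactly offsets the factor $2^d$ in the size of the universe, so the hypothesis $|A|>\frac2\e\log(|\mathcal A|N^d)$ (with $N^d$, not $(2N)^d$) is the right one. An alternative presentation would embed $[N]^d$ into $\Z_{2N}^d$, prove the variant of Lemma~\ref{lem:sum-combinatorial} that only requires $A+B\supseteq[N]^d$, and then replace each element of the resulting $B$ by its unique representative in $\{-N,\dots,N-1\}^d$, checking that no coordinate wraps around; this is equivalent to the argument above.
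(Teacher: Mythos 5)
Your proof is correct. It does, however, take a slightly different route from the paper's. The paper's proof is a two-line reduction to Lemma~\ref{lem:sum-combinatorial} applied verbatim in $\Z_N^d$: view $A$ as a subset of $\Z_N^d$, obtain $\tilde B\subset\Z_N^d$ with $|\tilde B|\le\e N^d$ and $A+\tilde B=\Z_N^d$ for all $A$, identify $\tilde B$ with a subset of $[N]^d$, and set $B=\bigcup_{v\in\{-N,0\}^d}(\tilde B+v)$; for $x\in[N]^d$ and $a+\tilde b\equiv x\pmod N$, each coordinate of $a+\tilde b$ is either $x_i$ or $x_i+N$, so one of the $2^d$ shifts by $v\in\{-N,0\}^d$ hits $x$ exactly, and no wrap-around remains. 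You instead rerun the probabilistic existence argument from scratch on the box $\{-N,\dots,N-1\}^d$ of size $(2N)^d$, using the observation that $x-A$ already lies in that box so ordinary integer addition suffices; the factor $2^d$ then comes from the larger ambient size $(2N)^d$ rather than from unioning $2^d$ translates. Both arguments are correct and yield the same constant $2^d\e N^d$; the paper's is shorter given that Lemma~\ref{lem:sum-combinatorial} is already in hand (it reuses the lemma as a black box), while yours is self-contained at the mild cost of repeating the union-bound computation. Your closing remark — that covering only $[N]^d$ rather than all of $\{-N,\dots,N-1\}^d$ exactly offsets the enlargement of the universe, which is why the hypothesis can stay as $\frac2\e\log(|\mathcal A|N^d)$ — is a nice way to see why no constant is lost.
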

\begin{proof}
    Seeing $A$ as a subset of $\Z_N^d$, we apply Lemma \ref{lem:sum-combinatorial} to get a set $\tilde{B}\subset \Z_N^d$ with $|\tilde{B}|\le \e N^d$ such that $A+\tilde{B}=\Z_N^d$ for all $A\in\mathcal{A}$. Then, we can take $B$ to be the union of the translates of $\tilde{B}$ by all vectors in $\Z^d$ with coordinates $-N$ or $0$.
\end{proof}

Recall the definition of the Hausdorff metric $d_H$ from \S\ref{subsec:Hausdorff-metric}.
\begin{lemma} \label{lem:discrete-to-continuous}
    Let $\e>0$ and let $\delta\in 2^{-\N}$ be a small enough dyadic number (depending on $\e$). Let $\mathcal{A}$ be a family of compact subsets of $[0,1]^d$.  Denote the $\delta$-covering number of $\mathcal{A}$ in the Hausdorff metric by $|\mathcal{A}|_{\delta}$. Suppose that
    \[
        |A|_{\delta}> \frac{18^d}{\e}\log\left(\delta^{-1}\cdot|\mathcal{A}|_{\delta}\right) \quad\text{for all }A\in\mathcal{A}.
    \]
    Then, there exists a set $B\subset [-1,1]^d$ which is a union of $\delta$-dyadic cubes, such that $|B|\le \e$ and
    \[
        A+B \supset [0,1]^d \quad\text{for all }A\in\mathcal{A}.
    \]
\end{lemma}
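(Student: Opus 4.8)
The plan is to transfer the combinatorial statement in Corollary~\ref{cor:sum-combinatorial} to the continuum by a discretization/rediscretization scheme, preceded by a reduction to a finite net so that the number of sets handed to the combinatorial lemma is governed by $|\mathcal{A}|_\delta$.

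Write $\delta=2^{-n}$ and $N=\delta^{-1}\in\N$. First I would fix a $\delta$-net $\mathcal{A}_0$ for $\mathcal{A}$ in the Hausdorff metric, so that $|\mathcal{A}_0|=|\mathcal{A}|_\delta$ and every $A\in\mathcal{A}$ lies within Hausdorff distance $\delta$ of some $A_0\in\mathcal{A}_0$, which (by discarding points) we may take to be a compact subset of $[0,1]^d$. To each $A_0$ I attach its discretization $\widehat{A_0}\subseteq[N]^d$: the set of indices $j$ such that the dyadic cube $Q_j\in\mathcal{D}_n$ lies within distance $\delta$ of $A_0$, clipped to $j\in[N]^d$. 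Two routine facts are needed: (i) if $d_H(A,A_0)\le\delta$ then the cubes $\{Q_j:j\in\widehat{A_0}\}$ cover $A$ (the dyadic cube containing any $a\in A$ is within $\delta$ of $A_0$), so $A$ is covered by $|\widehat{A_0}|$ sets of diameter $\delta\sqrt d$ and hence, comparing with Euclidean $\delta$-covers via \eqref{eq:packing-covering}, $|\widehat{A_0}|\ge c_d\,|A|_\delta$ for a dimensional $c_d>0$; and (ii) $|\{\widehat{A_0}:A_0\in\mathcal{A}_0\}|\le|\mathcal{A}_0|=|\mathcal{A}|_\delta$, since $A_0\mapsto\widehat{A_0}$ is a map.

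Next I would feed the family $\{\widehat{A_0}\}$ of subsets of $[N]^d$ into Corollary~\ref{cor:sum-combinatorial} with a parameter $\e'$ that is a small dimensional multiple of $\e$. Since $N^d=\delta^{-d}$ and $|\mathcal{A}|_\delta\ge1$, one has $\log(|\{\widehat{A_0}\}|\cdot N^d)\le\log(|\mathcal{A}|_\delta\,\delta^{-d})\le d\,\log(\delta^{-1}|\mathcal{A}|_\delta)$, so the hypothesis $|\widehat{A_0}|>\tfrac{2}{\e'}\log(|\{\widehat{A_0}\}|N^d)$ follows from $|\widehat{A_0}|\ge c_d|A|_\delta$ together with the assumption $|A|_\delta>\tfrac{18^d}{\e}\log(\delta^{-1}|\mathcal{A}|_\delta)$, provided the numerical and dimensional constants are matched (this is where the specific constant $18^d$, and the requirement that $\delta$ be small depending on $\e$, are spent). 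The corollary then yields $B'\subseteq\{-N,\dots,N-1\}^d$ with $|B'|\le 2^d\e'N^d$ and $\widehat{A_0}+B'\supseteq[N]^d$ for every $A_0$. Discarding unused elements we may assume $B'\subseteq\{-(N-1),\dots,N-1\}^d$, since the elements witnessing these inclusions are of the form $k-j$ with $j,k\in[N]^d$.

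Finally I would undo the discretization. Put $B:=\bigl(\bigcup_{b\in B'}(\delta b+[-C_0\delta,C_0\delta]^d)\bigr)\cap[-1,1]^d$ for a fixed integer dimensional constant $C_0$; this is a union of dyadic $\delta$-cubes contained in $[-1,1]^d$, with $|B|\le(2C_0\delta)^d|B'|\le(2C_0)^d2^d\e'\le\e$ once $\e'$ is chosen suitably. To check $[0,1]^d\subseteq A+B$ for $A\in\mathcal{A}$: pick $A_0$ with $d_H(A,A_0)\le\delta$, take $x\in[0,1)^d$ and write $x=\delta(k+v)$ with $k\in[N]^d$, $v\in[0,1)^d$; choose $j\in\widehat{A_0}$, $b\in B'$ with $j+b=k$. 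As $Q_j$ is within $\delta$ of $A_0$ and $A_0$ within $\delta$ of $A$, there is $a\in A$ with $\|x-a-\delta b\|_\infty<C_0\delta$, and since $x\in[0,1)^d$, $a\in[0,1]^d$ force $x-a\in(-1,1)^d$, we get $x-a\in B$; hence $x\in A+B$, and as $A$ is compact and $B$ closed, $A+B\supseteq\overline{[0,1)^d}=[0,1]^d$.

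The main obstacle is exactly the bookkeeping across the two changes of scale: one must pin down the net scale, the clipping and thickening constants $C_0,c_d$, and the threshold on $\delta$ so that simultaneously (a) discretization costs only a dimensional factor in passing from $|A|_\delta$ to $|\widehat{A_0}|$ and from $\log(\delta^{-1}|\mathcal{A}|_\delta)$ to the combinatorial log-term, (b) rediscretization inflates $|B'|$ by only a dimensional factor before the measure bound $|B|\le\e$, and (c) $B$ remains a union of dyadic $\delta$-cubes inside $[-1,1]^d$. Granting that, the statement reduces cleanly to Corollary~\ref{cor:sum-combinatorial}.
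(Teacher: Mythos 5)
Your proof is correct and follows the same strategy as the paper's: discretize to $[N]^d$, invoke Corollary~\ref{cor:sum-combinatorial} to produce a small discrete $B'$, then thicken $B'$ back to a union of $\delta$-dyadic cubes while absorbing the rounding errors. One genuine refinement in your write-up is worth highlighting: you first pass to a $\delta$-net $\mathcal{A}_0$ and discretize only the net elements, so that the cardinality term fed into Corollary~\ref{cor:sum-combinatorial} is honestly bounded by $|\mathcal{A}|_\delta$. The paper's proof applies the corollary directly to $\{\Delta[A]:A\in\mathcal{A}\}$ without introducing a net, but the map $A\mapsto\Delta[A]$ is not stable under $\delta$-Hausdorff perturbations (a point can cross a cube boundary), so $|\{\Delta[A]:A\in\mathcal{A}\}|$ can vastly exceed $|\mathcal{A}|_\delta$; your net reduction (together with the extra thickening constant $C_0$ that covers both the within-cube offset and the net-to-$A$ error) is precisely what makes the cardinality accounting go through. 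In short, same approach, but your version is the one that actually closes the loop on the cardinality bound.
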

\begin{proof}
    Let $m=\delta^{-1}$. Given a vector $v$ with integer coordinates, let
    \begin{equation} \label{eq:Qv}
        Q_v = \frac{1}{m}v + \left[0,\frac{1}{m} \right]^d.
    \end{equation}
    Given a set $A\subset [0,1]^d$, let $\Delta[A]$ be the set obtained by replacing each element in $A\cap Q_{a'}$ by $a'$. Then, $\Delta[A]\subset [m]^d$. Note that each $Q_{a'}$ can be covered by $2^d+1$ balls of radius $\delta$ (one centred at the centre of the cube and one centred at each of the corners). Hence, we have
    \[
        |\Delta[A]| \ge (2^d+1)^{-1} |A|_{\delta}> 3^{-d}|A|_{\delta}.
    \]
    By Corollary \ref{cor:sum-combinatorial}, there exists $B'\subset \{ -m,\ldots, m-1\}^d$ with $|B'|\le \tfrac{1}{2}\e 3^{-d}\cdot m^d$, such that
    \[
        \Delta[A]+B' \supset [m]^d \quad\text{for all } A\in\mathcal{A}.
    \]
    Let
    \begin{equation*}
        \begin{split}
        \tilde{B} &=  \bigcup_{b\in B'} \bigcup_{b'\sim b} Q_{b'},\\
        B &= [-1,1]^d\cap \tilde{B},
        \end{split}
    \end{equation*}
    where $b'\sim b$ if $b-b'\in \{0,\pm 1\}^d$. Then, $B\subset [-1,1]^d$ and $|B|\le \eta$.

    If $a\in Q_{a'}$ and $b\in B'$, then
    \[
        a+\bigcup_{b'\sim b} Q_{b'} \supset Q_{a'+b'}.
    \]
    It follows that $A+\tilde{B}\supset [0,1]^d$ for all $A\in\mathcal{A}$. But $A+b\notin [0,1]^d$ unless $b\in [-1,1]^d$. We conclude that $A+B\supset [0,1]^d$ for all $A\in\mathcal{A}$ as well.
\end{proof}

\begin{corollary} \label{cor:discrete-to-continuous}
    Let $P,Q\subset\R^d$ be two cubes of the same side-length $r\in (0,1]$. Let $\e>0$ and let $\delta\in 2^{-\N}$ be a small dyadic number. Let $\mathcal{A}$ be a family of compact subsets of $P$. Suppose that
    \[
        |A|_{\delta}> \frac{(108)^d}{\e}\log\left(\delta^{-1}\cdot|\mathcal{A}|_{\delta}\right) \quad\text{for all }A\in\mathcal{A}.
    \]
    Moreover, let $a(A)\in A$ for all $A\in\mathcal{A}$.
    Then, there exists a set $B\subset 2Q$ which is a union of $\delta$-dyadic cubes, such that $|B|\le \e|Q|$ and
    \[
        A+B \supset a(A)+Q \quad\text{for all }A\in\mathcal{A}.
    \]
\end{corollary}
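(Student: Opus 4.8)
The plan is to deduce the statement from Lemma~\ref{lem:discrete-to-continuous}, applied not to $A$ itself but to a suitably chosen ``heavy core'' of $A$ sitting inside a translate of $Q$.

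After translating the ambient space we may assume $Q=[-r/2,r/2]^d$, so that $2Q=[-r,r]^d$ (the position of $P$ is irrelevant, since only $A\subseteq P$ is used). For each $A\in\mathcal A$ we choose the reference point $a(A)\in A$ as follows: subdivide $P$ into $2^d$ congruent subcubes of side $r/2$, pick one, $S$, with $|A\cap S|_\delta\ge 2^{-d}|A|_\delta$, and take $a(A)\in A\cap S$. Since $a(A)\in S$ and $S$ has side $r/2$, we have $S\subseteq a(A)+Q$, so the core
\[
   H_A:=\bigl(A-a(A)\bigr)\cap Q=\bigl(A\cap(a(A)+Q)\bigr)-a(A)
\]
satisfies $H_A\subseteq Q$ and $|H_A|_\delta=|A\cap(a(A)+Q)|_\delta\ge 2^{-d}|A|_\delta$. (If the corollary is read with $a(A)$ prescribed rather than chosen, one needs it to be such a ``heavy'' point of $A$, which is the case in the intended applications.) Also, $H_A+B\supseteq Q$ implies $A+B=\bigl((A-a(A))+B\bigr)+a(A)\supseteq(H_A+B)+a(A)\supseteq a(A)+Q$, so it suffices to produce a single union of $\delta$-dyadic cubes $B\subseteq 2Q$ with $|B|\le\e|Q|$ and $H_A+B\supseteq Q$ for all $A\in\mathcal A$.

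This last statement is exactly Lemma~\ref{lem:discrete-to-continuous} after rescaling the side-$r$ cube $Q$ to $[0,1]^d$: the translation in the change of variables is absorbed into the cores, while $[-1,1]^d$ transforms (by the dilation factor $r$) precisely into $2Q$ and $|\widehat B|\le\e$ into $|B|\le\e|Q|$. The rescaling turns $\delta$ into $\delta/r$, which we round down to a dyadic $\delta'$; this only increases $|\widehat H_A|_{\delta'}$ and affects covering numbers only inside logarithms. One then checks that the lemma's hypothesis $|\widehat H_A|_{\delta'}>\tfrac{18^d}{\e'}\log\bigl(\delta'^{-1}|\{\widehat H_A\}|_{\delta'}\bigr)$ follows from $|A|_\delta>\tfrac{108^d}{\e}\log(\delta^{-1}|\mathcal A|_\delta)$: indeed $|\widehat H_A|_{\delta'}\ge 2^{-d}|A|_\delta$; the family covering number $|\{\widehat H_A\}|_{\delta'}$ is at most $|\mathcal A|_\delta$ times the $O_d(\delta^{-d})$ dyadic positions for $a(A)$ in $P$; the hypothesis forces $\delta/r$ to be small, so $\delta'$ is as small as the lemma requires; and choosing $\e'=(d+2)3^{-d}\e$ (note $(d+2)3^{-d}\le1$) and using $\log(\delta^{-1}|\mathcal A|_\delta)\ge\log\delta^{-1}$, the factor $108^d/18^d=6^d$ leaves ample room. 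Pulling back the $\widehat B$ produced by the lemma gives $B\subseteq 2Q$, and a final refinement makes it a union of genuine $\delta$-dyadic cubes at a bounded-constant cost in $|B|$ (with a negligible enlargement that stays inside $2Q$ for $\delta$ small), whence $A+B\supseteq a(A)+Q$ for all $A$.

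The main obstacle is precisely this accounting: selecting $a(A)$ so that the core $H_A$ keeps a fixed fraction of the covering number of $A$, and then carrying the covering-number hypothesis and the $B\subseteq 2Q$ constraint through the rescaling so that the admissible constant comes out as $108^d$.
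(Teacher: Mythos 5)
Your argument is structurally close to the paper's (both reduce to Lemma~\ref{lem:discrete-to-continuous} via rescaling, a $2^d$-pigeonhole to isolate a heavy subcube, and an accounting of the $O(\delta^{-d})$ dyadic positions that can absorb the translation into the covering number of the family), but there is one genuine divergence that constitutes a gap: you replace the \emph{given} point $a(A)$ by a \emph{chosen} heavy point.

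Concretely, your core $H_A=(A-a(A))\cap Q$ only has covering number $\gtrsim 2^{-d}|A|_\delta$ because you \emph{select} $a(A)$ inside a heavy subcube $S$; if $a(A)$ is handed to you and $A$ is $\delta$-concentrated far from it, then $H_A$ can be a single point and the argument collapses. The corollary as stated is quantified with $a(A)$ prescribed, and the paper's proof does handle that generality by a different device: it covers $P$ by the $2^d$ unit cubes that have $a(A)$ as a \emph{vertex} (which is always possible regardless of where $a(A)$ sits), pigeonholes one such cube $\tilde P_A$ to be heavy, translates by its lower-left corner $v(A)$, applies Lemma~\ref{lem:discrete-to-continuous}, and then absorbs the unknown offset $a(A)-v(A)\in\{0,1\}^d$ by taking the final $B$ to be the union of $B'+v$ over $v\in\{0,1,2\}^d$. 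That $3^d$-fold union is exactly what makes the conclusion $A+B\supset a(A)+Q$ correct for an arbitrary prescribed $a(A)$, and is also where the factor $3^d$ inside $108^d = 2^d\cdot 18^d\cdot 3^d$ comes from; your $\e'=(d+2)3^{-d}\e$ bookkeeping does not encode this mechanism.

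Your parenthetical remark --- that in the intended application $a(A)$ is always a heavy point --- is in fact true: in the proof of Theorem~\ref{thm:technical-nonemptyinterior}, $a(A)=f_i(a)$ is the centre of the cube containing $f_i(A')$, hence lies at the common corner of all $2^d$ subcubes and belongs to whichever is heavy. But this needs to be checked and, as written, your argument proves a weaker statement than the corollary claims. Either add the $3^d$-shift step (at which point you recover the paper's proof), or explicitly weaken the corollary's statement and verify the extra hypothesis where it is invoked.
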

\begin{proof}
    By rescaling and translating, we may assume that $Q=[-1,1]^d$. Note that rescaling the rectangles has the effect of rescaling the value of $\delta$ by the same factor $2/r$ in both $|A|_{\delta}$ and $|\mathcal{A}|_{\delta}$, so the assumption of the corollary is preserved.

    For each $A\in\mathcal{A}$, we can cover $P$ by $2^d$ cubes of side length $1$ having $a(A)$ as a vertex. By pigeonholing, there exists one such cube $\tilde{P}_A$ satisfying
    \[
     |\widetilde{P}_A \cap A|_{\delta} \ge 2^{-d}|A|_{\delta}.
    \]
    Let $v(A)$ be the lower left corner of $\tilde{P}_A$. By Lemma \ref{lem:discrete-to-continuous} (applied to a suitable translation of the cubes) there exists a set $B'\subset [-2,0]^d$ which is a union of $\delta$-dyadic cubes, such that $|B'|\le 3^{-d}\e$ and
    \[
      A+B' \supset v(A)+[-1,0]^d \quad\text{for all }A\in\mathcal{A}.
    \]
    Let
    \[
        B = \bigcup_{v\in \{0,1,2\}^d} B'+\sum_{i=1}^d e_i v_i = \bigcup_{w,w'\in\{0,1\}^d} B'+\sum_{i=1}^d e_i w_i + \sum_{j=1}^d e_j w'_j.
    \]
    Since $a(A)=v(A)+\sum_{i=1}^d e_i w_i$ for some $w\in\{0,1\}^d$, we conclude that $A+B\supset a(A)+[0,1]^d$ for all $A\in\mathcal{A}$.
\end{proof}

\subsection{Main technical result on sums with nonempty interior}

In this section, we prove a technical result that gives a sufficient condition on a set $A$ and a family of functions $\mathcal{F}$ for the sumset $f(A)+B$ to have nonempty interior for all $f\in\mathcal{F}$ and some null set $B$.

Given a family of functions $\mathcal{F}$ on $[0,1]^d$, we let $|\mathcal{F}|_{\delta}$ denote the covering number of $\mathcal{F}$ in the $L^\infty([0,1]^d)$ norm, that is, the smallest number $M$ such that there are $f_1,\ldots,f_M\in\mathcal{F}$ such that $\min_{i=1}^M\|f-f_i\|_{L^\infty([0,1]^d)}\le \delta$ for all $f\in\mathcal{F}$. (If no such $M$ exists, we set $|\mathcal{F}|_{\delta}=\infty$ for completeness.)
\begin{theorem} \label{thm:technical-nonemptyinterior}
    Fix a constant $C\ge 1$. Let $N(\delta), M(\delta):(0,1]\to \N$ be functions such that
    \begin{equation} \label{eq:limit-N-M}
        \lim_{\delta\to 0^{+}} \,\frac{N(C\delta)}{\log \bigl(\delta^{-1}\cdot M(\delta)\bigr)} = +\infty.
    \end{equation}
    Let $\mathcal{F}$ be a family of functions from $[0,1]^d\to \R^d$ such that:
    \begin{enumerate}[label=\textup{(\Roman*)}]
        \item \label{it:I} 
        \[
            C^{-1}\|x-y\|_{\infty}\le \|f(x)-f(y)\|_\infty \le \|x-y\|_{\infty} \quad\text{for all }x,y\in [0,1]^d \text{ and all }f\in \mathcal{F}.
        \]
        \item  \label{it:II}
        For all sufficiently small $\delta>0$,
        \[
            |\mathcal{F}|_{\delta} \le M(\delta).
        \]
    \end{enumerate}

    Then, for any $N$-large set $A\subset [0,1]^d$, there exists a compact set $B$ with $|B|=0$ such that
    \[
        [0,1]^d\subset f(A)+B \quad\text{for all } f\in\mathcal{F}.
    \]
\end{theorem}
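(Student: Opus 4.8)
The plan is to deduce Theorem~\ref{thm:technical-nonemptyinterior} from Lemma~\ref{lem:nonemptyinterior} by verifying that the pair $(A',\mathcal{F})$ has the $\RRP$ property, where $A'\subset A$ is the compact subset witnessing $N$-largeness. Since the conclusion of Lemma~\ref{lem:nonemptyinterior} only gives $f(a_0)+R\subset f(A)+K$ for a \emph{rectangle} $R$, and we want $[0,1]^d\subset f(A)+B$, the first observation is that this is not a real obstruction: by condition~\ref{it:I} every $f\in\mathcal{F}$ is a contraction with bi-Lipschitz constant $C$, so if we establish the $\RRP$ with some initial rectangle $R$ containing a fixed small cube $Q_0$ around a point $a_0\in A'$, then $f(a_0)+Q_0$ is a cube of definite size, and finitely many translates of it (together with the corresponding translated copies of the null set $K$) cover $[0,1]^d$. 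So it suffices to produce the $\RRP$.

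To verify the $\RRP$, fix a rectangle $Q\subset R$, a point $a\in A'$, and $\e>0$. I would first replace $Q$ by a sub-cube: cover $Q$ by finitely many cubes $Q^{(1)},\dots,Q^{(m)}$ of equal small side length $r$, and handle each separately, taking unions of the resulting elementary sets and finite sets at the end (the factor-$m$ loss in $\e$ is harmless, and the containment $T\subset 3Q$ survives since each $3Q^{(j)}\subset 3Q$ provided the $Q^{(j)}$ are chosen inside $Q$). So assume $Q$ is a cube of side length $r$. Now I want to apply Corollary~\ref{cor:discrete-to-continuous}. The family of sets to feed in is $\mathcal{A}=\{f(A'\cap f^{-1}(\text{something}))\}$ — more precisely, let $\mathcal{A}_\delta$ be a $\delta$-net in $\mathcal{F}$ of size $\le M(\delta)$, and consider $\mathcal{A}=\{f(A'): f\in\mathcal{A}_\delta\}$ together with distinguished points $a(f(A'))=f(a)$. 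The key point is the covering-number lower bound: for each $f$, condition~\ref{it:I} gives $C^{-1}\|x-y\|_\infty\le\|f(x)-f(y)\|_\infty$, so $f$ does not collapse scales, and therefore $|f(A')|_{\delta}\gtrsim_C |A'|_{\delta/C}\ge |A'|_{C^{-1}\delta}$; since $A'$ is $N$-large, along a subsequence $\delta=\delta_k\to 0$ we get $|A'\cap Q^\circ|_{\delta}$ comparable to $N(\delta)$ (after also localizing to $Q$, which is where $N$-largeness for \emph{every} cube meeting $A'$ is used). Combined with $|\mathcal{A}|\le M(\delta)$ and the divergence hypothesis~\eqref{eq:limit-N-M}, for $k$ large the hypothesis $|A|_\delta>\frac{(108)^d}{\e}\log(\delta^{-1}|\mathcal{A}|_\delta)$ of Corollary~\ref{cor:discrete-to-continuous} is satisfied. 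That corollary produces a set $B_0\subset 2Q$ with $|B_0|\le\e'|Q|$ (for $\e'$ a suitable fraction of $\e$) which is a union of $\delta$-dyadic cubes and satisfies $f(A')+B_0\supset f(a)+Q$ for every $f$ in the $\delta$-net.

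It remains to pass from the $\delta$-net to all of $\mathcal{F}$, and to convert $f(A')+B_0$ into the required form $f(A'')+T$. For a general $f\in\mathcal{F}$, pick $f_i$ in the net with $\|f-f_i\|_{L^\infty([0,1]^d)}\le\delta$. Then $f(A')\subset (f_i(A'))^{(\delta)}$ and $f(a)\in (f_i(a))^{(\delta)}$, so $f(a)+Q\subset f_i(a)+(1+2\delta/r)Q$; applying the conclusion at the net point (possibly for a slightly enlarged $Q$, absorbed into a constant) gives $f_i(a)+Q\subset f_i(A')+B_0$, and then $f(a)+Q\subset f(A')+B_0^{(C'\delta)}$ after accounting for the $\delta$-displacement between $f$ and $f_i$ — here one enlarges $B_0$ to its $C'\delta$-neighbourhood $T:=B_0^{(C'\delta)}$, which is still an elementary set contained in $3Q$ (for $\delta$ small relative to $r$) with $|T|\le\e|Q|$ once $\delta$ is small enough and $\e'$ was chosen appropriately; finally take $A''$ to be a finite $\delta$-dense subset of $A'$, noting $f(A')\subset f(A'')^{(\delta)}\subset A''+(\text{tiny cube})$, and fold that tiny cube into $T$. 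This gives $f(a)+Q\subset f(A'')+T$ for all $f\in\mathcal{F}$, which is exactly the $\RRP$. The main obstacle is the bookkeeping in this last step: one has to keep all the $\delta$-enlargements ($f$ vs.\ net point, $A'$ vs.\ finite subset, packing vs.\ covering constants from~\eqref{eq:packing-covering}) under control simultaneously and ensure that after absorbing them the measure bound $|T|\le\e|Q|$ and the localization $T\subset 3Q$ both still hold — which forces the order of quantifiers to be: first $\e$, then choose $\e'$, then choose the subsequence scale $\delta_k$ small enough depending on $\e,\e',r$, and finally invoke $N$-largeness at that scale.
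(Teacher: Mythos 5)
Your proposal follows the paper's argument essentially verbatim: reduce to the $\RRP$ via Lemma \ref{lem:nonemptyinterior} and verify the $\RRP$ by combining $N$-largeness of $A$, the covering-number bound on $\mathcal{F}$, and Corollary \ref{cor:discrete-to-continuous}. The paper is slightly cleaner at two points where you introduce detours: it simply takes the initial rectangle $R$ large enough that $[0,1]^d\subset f(a_0)+R$ for every $f\in\mathcal{F}$ (possible since $\{f(a_0):f\in\mathcal{F}\}$ is bounded), avoiding your ``finitely many translates'' step, and it applies Corollary \ref{cor:discrete-to-continuous} directly to $\mathcal{A}=\{f(A'):f\in\mathcal{F}\}$ using the Hausdorff-metric covering-number hypothesis $|\mathcal{A}|_{\delta'}\le M(\delta')$ rather than passing through a finite net and then fattening $B_0$; note also that the lower bi-Lipschitz bound gives $|f(A')|_\delta \ge |A'|_{C\delta}$ rather than $\gtrsim |A'|_{\delta/C}$, though this slip is harmless since $N$-largeness is invoked along arbitrarily small scales.
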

\begin{proof}
    By passing to a compact subset if needed, we may assume that $A=A'$ in the definition of $N$-large, that is, for any dyadic cube $Q$ such that $A\cap Q^{\circ}\neq\emptyset$, there exist arbitrarily small $\delta>0$ such that $|A\cap Q^{\circ}|_{\delta}\ge N(\delta)$.

    Fix $a_0\in A$. By the property \ref{it:II}, we can cover $\mathcal{F}$ by finitely many balls of radius $\delta$ in the $L^\infty([0,1]^d)$ norm (for some $\delta>0$). On each such ball, the value of $f(a_0)$ varies by at most $\delta$. Thus, the set $\{ f(a_0): f\in\mathcal{F}\}$ is bounded. We can therefore find a cube $R$ such that $[0,1]^d \subset f(a_0)+R$ for all $f\in\mathcal{F}$.

    By Lemma \ref{lem:nonemptyinterior}, it is enough to show that $(A,\mathcal{F})$ has the $\RRP$ property with initial rectangle $R$.

    Let, then $Q\subset R$ be a rectangle, let $\e>0$, and fix $a\in A$. By covering $Q$ by finitely many dyadic cubes, we may assume that $Q$ is itself a dyadic cube (indeed, if there are $L$ such cubes, we can apply this case with constant $\e/L$ and then take the union of the resulting elementary sets). We may also assume that $Q$ has side length at most $1$.

    Let $P=P(Q)$ be the cube centred at the origin and of side $C^{-1}$ times that of $Q$. Then, $A\cap (a+P^{\circ})\neq\varnothing$, and so there exist arbitrarily small $\delta>0$ such that
    \[
        |A\cap (a+P^{\circ})|_{\delta}\ge N(\delta).
    \]
    Thus, we can take a finite subset $A'=A'(a,Q)$ of $A\cap (a+P^{\circ})$ such that $a\in A'$ and $|A'|_{\delta}\ge N(\delta)$.

    Write $\delta'=\delta/C$. By \ref{it:II}, there exist $f_1,\ldots,f_{M(\delta')}\in\mathcal{F}$, such that
    \[
        \min\left\{\|f-f_i\|_{L^\infty([0,1]^d)}: 1\le i\le M(\delta')\right\} \le C^{-1}\cdot \delta.
    \]
    Consider the family $\mathcal{A}=\{f_i(A'):i=1,\ldots,M(\delta')\}$. By the bi-Lipschitz property \ref{it:I}, we have that for all $f\in\mathcal{F}$, the set $f(A')$ is $\delta'$-separated and contained in $f(a)+Q$.

    For each point $y_1=f_1(a)\in f_1(A)$, we have $|f_2(a)-f_1(a)|\le \|f_2-f_1\|_{L^\infty(A)}$. Thus, $f(A_2)$ is contained in the closed $\|f_2-f_1\|_{L^\infty(A)}$-neighbourhood of $f_1(A)$. By symmetry, we get
    \[
        \mathsf{H}(f_1(A),f_2(A))\le \|f_1-f_2\|_{L^\infty(A)},
    \]
    where we recall that $\mathsf{H}$ denotes the Hausdorff distance. Therefore, $|\mathcal{A}|_{\delta'}\le M(\delta')$. By the assumption \eqref{eq:limit-N-M} applied with $\delta'$ in place of $\delta=C\delta'$, if $\delta$ is chosen sufficiently small, we have
    \[
        |A'|_{\delta'}\ge |A'|_{\delta} \ge  N(\delta)> \frac{(108)^d}{\e}\log \bigl((\delta')^{-1}\cdot |\mathcal{A}|_{\delta'}\bigr).
    \]
    Corollary \ref{cor:discrete-to-continuous} then implies that there exists an elementary set $T\subset 2Q$ with $|T|\le \e|Q|$ and such that $f(A')+T\supset f(a)+Q$ for all $f\in\mathcal{F}$. We have verified the $\RRP$ property, and this completes the proof.
\end{proof}

\subsection{Applications}

\subsubsection{Affine images}

Let $\mathcal{F}$ be a compact subset of $\text{GL}_d(\R)$. Then there exists a constant $c=c(\mathcal{F})>0$ such that all $cf$, $f\in\mathcal{F}$, are contractions in the $\ell^\infty$ metric. By compactness, there is also a constant $C=C(\mathcal{F})\ge 0$ such that
\[
    C^{-1} \|x-y\|_{\infty} \le c\|f(x)-f(y)\|_{\infty} .
\]
Thus, after rescaling by $c$, we see that $\mathcal{F}$ satisfies the assumption \ref{it:I} of Theorem \ref{thm:technical-nonemptyinterior}. With this observation, we conclude the proof of Theorem \ref{thm:full-sumset-Erdos}.

\begin{proof}[Proof of Theorem \ref{thm:full-sumset-Erdos}]
    If $A$ is as in the statement then, by Lemma \ref{lem:large-Hausdorff-packing}, there exists $\eta>0$ such that $A$ is $N$-large for the function
    \[
        N(\delta) = \log^{1+\eta}(1/\delta).
    \]

    We can write $\text{GL}_d(\R)$ as a countable union of compact sets $\mathcal{F}_i$. Each of these sets can be covered by $\lesssim_i \delta^{-d^2}$ balls of radius $\delta$ in the $\ell^\infty$ metric. Thus, we can take $M(\delta)=C_i\delta^{-d^2}$ in Theorem \ref{thm:technical-nonemptyinterior}. The condition \eqref{eq:limit-N-M} is satisfied, and thus there is a set $B_i$ of zero Lebesgue measure such that $f(A)+B_i$ has nonempty interior for all $f\in\mathcal{F}_i$. (To be more precise, we scale the functions in each $\mathcal{F}_i$ by a constant $c_i$ so that assumption \ref{it:I} is satisfied, and also rescale the resulting sets $B_i$ provided by the theorem).

    Take
    \[
        B = \bigcup_i \bigcup_{q\in\mathbb{Q}} B_i + q.
    \]
    Then $B$ is an $F_\sigma$ set of zero Lebesgue measure, and $f(A)+B=\R^d$ for all $f\in\text{GL}_d(\R)$ and therefore for all non-singular affine maps.
\end{proof}

\subsubsection{Polynomial images}

\begin{corollary} \label{cor:polynomial-images}
    Let $A\subset [0,1]$ be a Borel set with $\plogdim(A)>2$ or $\hlogdim(A)>1$. Then there exists an $F_\sigma$ set $B\subset \R$ with $|B|=0$ such that $f(A)+B=\R$ for all non-constant polynomial maps $f:\R\to \R$.
\end{corollary}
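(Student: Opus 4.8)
The plan is to mimic the proof of Theorem~\ref{thm:full-sumset-Erdos}, reducing to Theorem~\ref{thm:technical-nonemptyinterior}. The one new difficulty is that a non-constant polynomial $f:[0,1]\to\R$ is \emph{not} bi-Lipschitz on $[0,1]$: assumption~\ref{it:I} fails exactly near the critical points of $f$. The point that resolves this is that $f$ has only finitely many critical points (the zeros of the nonzero polynomial $f'$), while a compact witness $A'$ of $N$-largeness is nonempty with no isolated points --- if $x\in A'$ were isolated, then $|A'\cap Q^{\circ}|_\delta=1$ for a small cube $Q\ni x$ and all small $\delta$, contradicting $N(\delta)\to\infty$ --- hence uncountable. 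Thus, for each fixed $f$ one may zoom into a small interval that still meets $A'$ but avoids the critical points of $f$, on which $f$ is bi-Lipschitz.

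First I would apply Lemma~\ref{lem:large-Hausdorff-packing} to fix $\eta>0$ with $A$ being $N$-large for $N(\delta)=\log^{1+\eta}(1/\delta)$, and fix a compact witness $A'\subset A$. For $n,k,m\in\N$ and an interval $I\subset[0,1]$ with rational endpoints, let $\mathcal{G}=\mathcal{G}_{n,k,I,m}$ denote the set of real polynomials $f$ of degree $\le n$ with all coefficients in $[-k,k]$ and $\min_{x\in\overline I}|f'(x)|\ge 2^{-m}$; being defined by a closed condition with bounded coefficients, $\mathcal{G}$ is compact. There are countably many such families, and every non-constant polynomial $f$ lies in one of them with moreover $A'\cap I^{\circ}\ne\varnothing$: take $n=\deg f$ and $k$ bounding the coefficients of $f$, pick (using that $A'\cap(0,1)$ is uncountable and $f$ has finitely many critical points) a non-critical $p\in A'\cap(0,1)$, then $I\ni p$ short enough that $\overline I$ avoids the critical points of $f$, and finally $m$ with $2^{-m}\le\min_{\overline I}|f'|$.

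Now fix such a family $\mathcal{G}$ with $A'\cap I^{\circ}\ne\varnothing$. By compactness, $0<\rho:=\inf_{f\in\mathcal{G},\,x\in\overline I}|f'(x)|$ and $K:=\sup_{f\in\mathcal{G},\,x\in\overline I}|f'(x)|<\infty$, so each $f|_I$ ($f\in\mathcal{G}$) is monotone and bi-Lipschitz with ratio at most $K/\rho$. Letting $\psi:\overline I\to[0,1]$ be the increasing affine bijection and $\widetilde f:=(K|I|)^{-1}\bigl(f\circ\psi^{-1}-f(\psi^{-1}(0))\bigr)$, the family $\widetilde{\mathcal{G}}=\{\widetilde f:f\in\mathcal{G}\}$ satisfies~\ref{it:I} with $C=K/\rho$ and, being a bounded set of polynomials of degree $\le n$ on $[0,1]$, satisfies~\ref{it:II} with $M(\delta)\lesssim_n\delta^{-(n+1)}$; then \eqref{eq:limit-N-M} holds since $N(\delta)=\log^{1+\eta}(1/\delta)$ dominates $\log\bigl(\delta^{-1}M(\delta)\bigr)\lesssim_n\log(1/\delta)$. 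I would then verify that $\widetilde A:=\psi\bigl(\overline{A'\cap I^{\circ}}\bigr)$ is $N$-large for the same $N$: given a cube $\widetilde Q$ with $\widetilde A\cap\widetilde Q^{\circ}\ne\varnothing$, pulling back by $\psi$ gives an open interval $Q_1\subset I^{\circ}$ with $A'\cap Q_1\ne\varnothing$, so $\limsup_\delta|A'\cap Q_1|_\delta/N(\delta)\ge1$ by the witness property of $A'$, and since $\psi$ expands distances by $1/|I|\ge1$ while $N(\delta/|I|)\le N(\delta)$, this passes to the required bound at scale $\widetilde\delta=\delta/|I|$ --- this last step is exactly where it is used that $N$ is of logarithmic type. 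Theorem~\ref{thm:technical-nonemptyinterior} then produces a compact null set $\widetilde B$ with $[0,1]\subset\widetilde f(\widetilde A)+\widetilde B$ for all $f\in\mathcal{G}$; unscaling, $B':=K|I|\,\widetilde B$ is compact and null, and for every $f\in\mathcal{G}$ the interval $J_f:=f(\psi^{-1}(0))+K|I|\cdot[0,1]$ satisfies $J_f\subset f\bigl(\overline{A'\cap I^{\circ}}\bigr)+B'\subset f(A)+B'$.

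Finally, exactly as at the end of the proof of Theorem~\ref{thm:full-sumset-Erdos}, I would set $B=\bigcup_{n,k,I,m}\bigcup_{q\in\Q}\bigl(B'_{n,k,I,m}+q\bigr)$, an $F_\sigma$ set of zero Lebesgue measure; for any non-constant polynomial $f$, choosing the parameters as above yields $f(A)+B\supseteq f(A)+B'_{n,k,I,m}+\Q\supseteq J_f+\Q=\R$. I expect the only real obstacle to be the one flagged at the start, the breakdown of assumption~\ref{it:I} at critical points; once it is handled by the localization above, the remaining steps (the rescaling bookkeeping and the invariance of $N$-largeness under an affine change of variables) are routine.
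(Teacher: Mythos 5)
Your proposal is correct and follows essentially the same strategy as the paper: localize to an interval $I$ meeting $A$ away from the (finitely many) critical points of $f$, cover the resulting countable collection of compact polynomial families using the evaluation-map entropy bound $M(\delta)\lesssim\delta^{-(n+1)}$, apply Theorem~\ref{thm:technical-nonemptyinterior} with $N(\delta)=\log^{1+\eta}(1/\delta)$, and take a rational-translate union at the end. If anything, your version is slightly more careful than the paper's: you impose an explicit coefficient bound $[-k,k]$ so that each family $\mathcal{G}_{n,k,I,m}$ is genuinely compact (the paper's $\mathcal{P}_{k,I}$ as literally written carries no upper bound on coefficients), and you spell out the affine rescaling and the preservation of $N$-largeness under $\psi$, which the paper leaves implicit.
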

\begin{proof}
    By Lemma \ref{lem:large-Hausdorff-packing}, there is $\eta>0$ such that $A$ is $N$-large for the function
    \[
        N(\delta) = \log^{(1+\eta)}(1/\delta).
    \]
    Replacing $A$ by the subset $A'$ in the definition of $N$-large, we may assume that $A\cap I$ is $N$-large for all dyadic intervals $I$ such that $A\cap I^\circ\neq\emptyset$. Let $\mathcal{I}(A)$ denote the set of all such intervals (of all lengths).

    For each $k\in\N$ and $I\in\mathcal{I}(A)$, let $\mathcal{P}_{k,I}$ be the family of all polynomials $f$ of degree in $[1,k]$ such that $f(x_I)=0$ (where $x_I$ is the left endpoint of $I$) and $|f'|_I| \ge k^{-1}$.

    It is easy to check that $\mathcal{P}_{k,I}$ can be covered by $\lesssim_{k,I} O(\delta^{-(k+1)})$ balls of radius $\delta$ in the $\ell^\infty$ metric. Indeed, given equispaced points $x_I=x_0,x_1,\ldots,x_{k}\in I$, the map $f\mapsto (f(x_0),\ldots,f(x_{k}))$ is injective, and smooth. Thus, we can take $M(\delta)=O_{k,I}(\delta^{-(k+1)})$ in Theorem \ref{thm:technical-nonemptyinterior}. The condition \eqref{eq:limit-N-M} is satisfied, and thus there is a set $B_{k,I}$ of zero Lebesgue measure such that $f(A\cap I)+B_{k,I}$ has nonempty interior for all $f\in\mathcal{P}_{k,I}$.

    On the other hand, for any non-constant polynomial $f$ there are $k,I$ such that $f(x)=f(x_I)+g(x)$, where $g\in\mathcal{P}_{k,I}$. Let
    \[
        B = \bigcup_{k\in\N}\bigcup_{I\in\mathcal{I}(A)} \bigcup_{q\in\mathbb{Q}} B_{k,I} + q.
    \]
    Then $B$ is $F_\sigma$, and $f(A)+B=\R$ for all non-constant polynomials $f$.
\end{proof}

\subsubsection{Convex images}

\begin{corollary}
    Let $A\subset [0,1]$ be a Borel set with $\pdim(A)>1/2$. Then there exists an $F_\sigma$ set $B\subset \R$ with $|B|=0$ such that $f(A)+B=\R$ for all strictly increasing convex functions $f:[0,1]\to \R$.
\end{corollary}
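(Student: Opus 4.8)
The plan is to follow the proof of Corollary \ref{cor:polynomial-images} closely, replacing the family of polynomials of bounded degree on a fixed interval by a \emph{localized} family of convex functions, and replacing the polynomial covering-number bound by Bronshtein's estimate for the $\e$-entropy of convex functions. I would first fix $\alpha\in(1/2,\pdim(A))$. Then $\mathcal{P}^{x^{\alpha}}(A)>0$, so by Lemma \ref{lem:large-Hausdorff-packing}(ii) the set $A$ is $N$-large for
\[
  N(\delta)=\left\lfloor\frac{1}{\log(2\delta^{-1})\log\log(2\delta^{-1})(4\delta)^{\alpha}}\right\rfloor,
\]
which tends to $\infty$ because $\alpha>0$. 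Passing to the compact subset $A'$ from the definition of $N$-large, I may assume, exactly as in the proof of Corollary \ref{cor:polynomial-images}, that $A\cap I$ is $N$-large for every dyadic interval $I$ with $A\cap I^{\circ}\neq\emptyset$; write $\mathcal{I}(A)$ for the family of all such intervals.

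For $k\in\N$ and $I\in\mathcal{I}(A)$ with left endpoint $x_I$, let $\mathcal{C}_{k,I}$ be the family of convex functions $g\colon I\to\R$ with $g(x_I)=0$ and $k^{-1}\le g'\le k$ on $I$; equivalently (extend $g$ linearly outside $I$), $\mathcal{C}_{k,I}$ consists of the restrictions to $I$, normalized to vanish at $x_I$, of the strictly increasing convex functions $f\colon[0,1]\to\R$ whose derivative on $I$ lies in $[k^{-1},k]$. Each $g\in\mathcal{C}_{k,I}$ is convex, $k$-Lipschitz, and takes values in $[0,k|I|]$, so after rescaling the domain $I$ affinely onto $[0,1]$ and the range by $(k|I|)^{-1}$, the family $\mathcal{C}_{k,I}$ becomes, up to an affine change of coordinates, a subfamily of the $1$-Lipschitz convex functions $[0,1]\to[0,1]$. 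By Bronshtein's theorem \cite{Bronshtein76}, that class is covered in the uniform norm by $\exp(C\e^{-1/2})$ balls of radius $\e$, with $C$ absolute; undoing the rescalings gives $|\mathcal{C}_{k,I}|_{\delta}\le\exp(C_{k,I}\delta^{-1/2})$ for a constant depending only on $k$ and $|I|$. I would then take $M(\delta)=\lceil\exp(C_{k,I}\delta^{-1/2})\rceil$, so that $\log(\delta^{-1}M(\delta))=O_{k,I}(\delta^{-1/2})$. Since $N(\delta)$ grows like $\delta^{-\alpha}$ up to logarithmic factors with $\alpha>1/2$, condition \eqref{eq:limit-N-M} holds; and since the rescaled functions have derivatives in $[k^{-2},1]$, assumption \ref{it:I} of Theorem \ref{thm:technical-nonemptyinterior} holds (with $C_{\mathcal{F}}=k^{2}$), as does \ref{it:II} with this $M$. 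Applying Theorem \ref{thm:technical-nonemptyinterior} to $A\cap I$ and $\mathcal{C}_{k,I}$ (after the rescaling above, as in the proof of Corollary \ref{cor:polynomial-images}) yields a compact set $B_{k,I}$ with $|B_{k,I}|=0$ and a nondegenerate interval $J_{k,I}$ such that $g(A\cap I)+B_{k,I}\supseteq J_{k,I}$ for all $g\in\mathcal{C}_{k,I}$.

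Finally I would check that every strictly increasing convex $f\colon[0,1]\to\R$ falls into some $\mathcal{C}_{k,I}$. As $\pdim(A)>1/2$ forces $A$ to be uncountable, $A$ contains a non-dyadic point $x\in(0,1)$; then for every $m$ with $2^{-m}<\min\{x,1-x\}$, the dyadic interval $I$ of length $2^{-m}$ containing $x$ satisfies $\overline{I}\subset(0,1)$ and $x\in A\cap I^{\circ}$, hence $I\in\mathcal{I}(A)$. Writing $b$ for the right endpoint of $I$, strict monotonicity gives $f'_{+}(x_I)>0$ (since $x_I>0$) and convexity gives $f'_{-}(b)<\infty$ (since $b<1$), so by monotonicity of the one-sided derivatives, $f'\in[k^{-1},k]$ on $I$ for every integer $k\ge\max\{f'_{-}(b),\,1/f'_{+}(x_I)\}$. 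For such a $k$, $g:=(f-f(x_I))|_I\in\mathcal{C}_{k,I}$ and $f(A)\supseteq f(A\cap I)=g(A\cap I)+f(x_I)$. Therefore, with
\[
  B=\bigcup_{k\in\N}\ \bigcup_{I\in\mathcal{I}(A)}\ \bigcup_{q\in\Q}\bigl(B_{k,I}+q\bigr),
\]
an $F_{\sigma}$ set with $|B|=0$, one obtains $f(A)+B\supseteq\bigl(J_{k,I}+f(x_I)\bigr)+\Q=\R$, since a nondegenerate interval translated by all rationals covers $\R$; this would give the corollary.

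The hard part is the metric-entropy input. A naive piecewise-linear approximation of a uniformly bounded convex function gives only a covering number of order $\exp(c\delta^{-1})$, which in \eqref{eq:limit-N-M} would require $\alpha>1$ and is thus useless, because packing dimension never exceeds $1$; one genuinely needs the exponent $1/2$ in Bronshtein's bound, and it is exactly this exponent that accounts for the threshold $\pdim(A)>1/2$ in the statement. A secondary, more routine point is the reduction to a member of some $\mathcal{C}_{k,I}$: one must work on a dyadic interval whose closure lies in $(0,1)$, so that $f'$ is bounded above and away from $0$ there, and must verify that such an interval exists in $\mathcal{I}(A)$.
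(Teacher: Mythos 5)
Your proposal is correct and follows essentially the same route as the paper: apply Lemma \ref{lem:large-Hausdorff-packing}(ii) with $\phi(x)=x^\alpha$ for $\alpha\in(1/2,\pdim A)$ to get $N$-largeness with $N(\delta)$ of order $\delta^{-\alpha}$ up to logarithmic factors, feed in the $\exp\bigl(O(\delta^{-1/2})\bigr)$ entropy bound for uniformly Lipschitz convex functions (the paper cites \cite[Proposition 6.1]{SS15}, you cite Bronshtein — same estimate), and localize to a family $\mathcal{C}_{k,I}$ on dyadic intervals exactly as in Corollary \ref{cor:polynomial-images}. You merely spell out the localization and the existence of a suitable interval $I\in\mathcal{I}(A)$ whose closure lies in $(0,1)$, which the paper leaves as ``a similar argument to that of Corollary \ref{cor:polynomial-images}.''
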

\begin{proof}
    By the assumption and Lemma \ref{lem:large-Hausdorff-packing}, there is $\eta>0$ such that $A$ is $N$-large for the function
    \[
        N(\delta) = \delta^{-(1/2+\eta)}.
    \]

    It was proved in \cite[Proposition 6.1]{SS15} that the family $\mathcal{C}^+$ of increasing convex functions $f:[0,1]\to [0,1]$ with $f(0)=0$ and right-derivative bounded above by $1$ satisfies
    \[
        |\mathcal{C}^+|_{\delta} \le M(\delta) := \exp\left(O(\delta^{-1/2})\log(\delta)\right).
    \]
    Note that $M$ and $N$ satisfy the assumptions of Theorem \ref{thm:technical-nonemptyinterior}. By a simple rescaling argument, if we bound the right-derivative of $f$ by $K$ instead, we get the same bound up to a constant. On the other hand, if $f$ is convex and strictly increasing, then there are $K$ and a dyadic interval $I$ such that $A\cap I$ is $N$-large and $f|_I$ is lower-Lipschitz with constant $K^{-1}$. A similar argument to that of Corollary \ref{cor:polynomial-images} then gives the claim.
\end{proof}

\section{Positive measure}

\label{sec:full-measure}

\subsection{The case of positive dimension}

As mentioned in the introduction, the special case of Theorem \ref{thm:full-measure-sumset} in which $A$ has positive Hausdorff dimension is very likely known. We include a proof for completeness.
\begin{lemma} \label{lem:positive-measure-positive-dim}
    Given $s>0$, there exists an $F_\sigma$ set $B\subset \R^d$ with $\hdim(B)<d$ such that $|\R^d\setminus(A+B)|=0$ for all Borel sets $A\subset \R^d$ with $\hdim(A)\ge s$.
\end{lemma}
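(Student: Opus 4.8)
The plan is to build $B$ out of a single "model" null set $B_0$ of large Hausdorff dimension supporting a measure with fast Fourier decay, and then take a countable union of rescaled translates of $B_0$ to upgrade "positive measure" to "full measure". First I would invoke (or reprove, since the lemma says this is likely folklore) the existence, for every $s<d$, of a compact set $B_0\subset\R^d$ with $|B_0|=0$, $\hdim(B_0)$ as close to $d$ as we like, carrying a probability measure $\nu$ whose Fourier transform satisfies $|\widehat\nu(\xi)|\lesssim (1+|\xi|)^{-\beta}$ for some $\beta>0$; such Salem-type sets can be produced by the standard random Cantor construction (or cited from the literature). The key quantitative input is that if $A$ is any Borel set with $\hdim(A)\ge s$, then by Frostman $A$ supports a probability measure $\mu$ with finite $t$-energy for some $t$; choosing $\hdim(B_0)$ close enough to $d$ makes $\beta$ large enough that the convolution $\mu*\nu$ has an $L^2$ density (one checks $\int |\widehat\mu(\xi)|^2|\widehat\nu(\xi)|^2\,d\xi<\infty$ using $\int|\widehat\mu(\xi)|^2|\xi|^{t-d}\,d\xi<\infty$ and $|\widehat\nu(\xi)|^2\lesssim|\xi|^{-2\beta}$ with $2\beta\ge d-t$). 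Hence $\mu*\nu\ll\mathcal L^d$, and in particular $A+B_0$ has positive Lebesgue measure — indeed it contains, up to a null set, the support of a nonnegative $L^1$ function that is not a.e. zero.

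Next I would pass from "positive measure" to "full measure" by a translation-and-scaling trick, exactly as in the proof of Theorem \ref{thm:full-sumset-Erdos}. Any Borel set $A$ with $\hdim(A)\ge s$ has a compact subset $A'\subseteq A$ with $\hdim(A')\ge s$ and $\mathcal H^t(A')>0$ (Frostman again), and by inner regularity we may further arrange $A'$ to have a compact piece in any dyadic cube it meets; rescaling $A'$ to a bounded set, the above paragraph shows $A'+rB_0$ has positive measure for every dilation factor $r>0$. Since positive-measure subsets of $\R^d$ have density points, for each such $A'$ and each $\e>0$ there is a ball on which $A'+rB_0$ has relative measure $>1-\e$; letting $r$ range over a countable dense set of scales and translating by all rational vectors, the union
\[
    B=\bigcup_{r\in\Q_{>0}}\bigcup_{q\in\Q^d}\bigl(rB_0+q\bigr)
\]
is an $F_\sigma$ set, still of Hausdorff dimension $<d$ (a countable union does not raise dimension) and still Lebesgue-null, and one checks that $\R^d\setminus(A+B)$ is null: a point $x$ where $A+B$ had positive-measure complement in every neighbourhood would contradict the density-point statement applied at a suitable scale $r$ and rational translate.

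The main obstacle is the passage from positive to full measure in a \emph{uniform} way — a fixed $B$ must work simultaneously for \emph{all} $A$ with $\hdim(A)\ge s$. The countable-union-of-rescalings device handles this because the "positive measure at every scale, with arbitrarily large density on some ball" conclusion is scale-invariant and depends on $A$ only through the Frostman exponent $t$, which is uniformly bounded below by $s$; so a single $B_0$ with $\hdim(B_0)$ close enough to $d$ (namely $\hdim(B_0)>d-s$, say, to guarantee $2\beta>d-t$ for all admissible $t$) suffices. The remaining routine points — that the random construction really yields the claimed Fourier decay, and the elementary Lebesgue-density argument closing the full-measure claim — I would only sketch, citing standard references for the former.
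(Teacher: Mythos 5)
Your proposal follows essentially the same route as the paper's proof: construct a Salem-type null set $B_0$ of dimension close to $d$ carrying a measure with polynomial Fourier decay (the paper cites Kahane), pair it via Frostman with a finite-energy measure on $A$, check that the convolution has an $L^2$ density via Plancherel, and conclude $|A+B_0|>0$; then upgrade to full measure by a countable union of translates and a Lebesgue density argument. The only cosmetic difference is that you include dilations $rB_0$ as well — the paper uses only rational translates, which already suffice — and you slightly overstate the Salem-set dimension requirement (one wants $\hdim(B_0)$ close to $d$ so the decay exponent $\beta$ is close to $d/2$; then $2\beta>d-t$ holds for any $t<s$). Both are harmless.
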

\begin{proof}
    J.P.~Kahane \cite{Kahane66} showed that for any $\kappa<d/2$ there exists a Borel probability measure $\mu$ on $\R^d$ such that $\hdim(\supp\mu)<d$ and $|\widehat{\mu}(\xi)| \le C\cdot|\xi|^{-\kappa}$ for some constant $C$ and all frequencies $\xi\in\R^d$. Kahane's constructions are random, see \cite{FraserHambrook23} for a recent deterministic construction.

    Given $s>0$, take $\kappa=1/2-s/4$. Suppose $A$ is a Borel set with $\hdim(A)\ge s$. By Frostman's lemma \cite[Theorem 2.8]{Mattila15} and the Fourier interpretation of the Riesz energy \cite[Theorem 3.10]{Mattila15}, there exists a Borel probability measure $\nu$ supported on $A$ such that
    \[
        \int |\widehat{\nu}(\xi)|^2 |\xi|^{s+s/8-d}\,d\xi <\infty.
    \]
    By Plancherel and the convolution formula, this implies that $\mu*\nu$ is absolutely continuous with a density in $L^2$, and in particular $|A+\supp(\mu)|>0$. We can therefore take
    \[
        B=\bigcup_{q\in\mathbb{Q}^d} \supp(\mu)+q.
    \]
\end{proof}
There are several obstacles in extending this approach to sets of zero Hausdorff dimension. We do not know how to get a suitable Fourier expression for the Riesz energy corresponding to a function $\psi$ that grows more slowly than any power. The known constructions of sets with fast Fourier decay of dimension close to $1$ entail logarithmic losses in the decay rate (compared to the theoretical fastest decay). Even a logarithmic loss would leave out many sets of dimension zero. Hence, we follow a somewhat different approach, by constructing sets which locally have the fastest possible Fourier decay. This fast local decay is derived from number-theoretic constructions, which we review in the next section.

\subsection{Preliminaries on discrete sumsets and the discrete Fourier transform}

For the reader's convenience, we recall some basic facts about the Fourier transform on finite Abelian groups $G$. See \cite[Chapter 4]{TaoVu06} for more details.

The group $G$ can be written as $\Z_{m_1}\times\cdots\times \Z_{m_r}$ for some cyclic groups $Z_{m_j}$. Given $x=(x_1,\ldots,x_r)$ and $\xi=(\xi_1,\ldots,\xi_r)\in G$, we define a bilinear form from $G\times G\to \R/\Z$ by
\[
    x\cdot \xi = \sum_{j=1}^r x_j\xi_j/m_j.
\]
We then define the Fourier transform $\hat{f}:G\to \mathbb{C}$ by
\[
    \mathcal{F}(f)(\xi) = \hat{f}(\xi) = \frac{1}{|G|}\sum_{x\in G} f(x) e^{-2\pi i \xi\cdot x}.
\]
(Here, $|G|^{-1}$ is a choice of normalization that is not essential; we follow the convention of \cite{TaoVu06}.) One can define the Fourier transform more abstractly in terms of the dual group $\hat{G}$, but we will not need this here. Different choices of bilinear forms give different Fourier transforms, but they are all equivalent up to a group isomorphism.

The $L^2$ norm of $f:G\to\C$ is given by
\[
    \|f\|_2 = \left(\sum_{x\in G} |g(x)|^2\right)^{1/2}.
\]
We have the Plancherel identity
\[
    |G| \cdot \|\hat{f}\|_2^2 = \|f\|_2^2.
\]
The \emph{convolution} of two functions $f,g:G\to \mathbb{C}$ is defined by
\[
    (f*g)(x) = \frac{1}{|G|}\sum_{y\in G} f(y)g(x-y).
\]
We have the convolution formula:
\[
    \widehat{f*g}(\xi) = \hat{f}(\xi)\hat{g}(\xi).
\]

If $A\subset G$, the indicator function $\mathbf{1}_A$ takes value $1$ on $A$ and $0$ off it. Note that $\widehat{\mathbf{1}_A}(0) = |A|/|G|$, and
\begin{equation} \label{eq:support-conv}
    \mathbf{1}_A*\mathbf{1}_B(x)=0 \text{ for } x\in G\setminus (A+B).
\end{equation}
The \emph{linear bias} (or Fourier uniformity) of $B\subset G$ is defined as
\[
    \|B\|_u = \max_{\xi\neq 0} |\widehat{\mathbf{1}_B}(\xi)|.
\]
The following lemma is standard (see e.g. \cite[Exercise 4.3.7]{TaoVu06}), but we include a proof for completeness.
\begin{lemma} \label{lem:bias-to-sumset}
   For any $A,B\subset G$, we have
   \[
     \frac{|G|}{|A+B|} \le 1+\frac{\|B\|_u^2 |G|^3}{|A||B|^2}.
   \]
\end{lemma}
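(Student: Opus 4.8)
The plan is to bound $|G|/|A+B|$ by comparing $\|\mathbf{1}_A * \mathbf{1}_B\|_2^2$ against the "expected" value $(|A||B|/|G|^2)^2$ one would get if $\mathbf{1}_A*\mathbf{1}_B$ were constant. First I would note, using \eqref{eq:support-conv}, that $\mathbf{1}_A*\mathbf{1}_B$ is supported on $A+B$, so by Cauchy--Schwarz
\[
    \Bigl(\sum_{x\in G} \mathbf{1}_A*\mathbf{1}_B(x)\Bigr)^2 \le |A+B|\cdot \|\mathbf{1}_A*\mathbf{1}_B\|_2^2.
\]
The left-hand side is easy to compute: $\sum_x \mathbf{1}_A*\mathbf{1}_B(x) = |G|^{-1}\sum_{x,y}\mathbf{1}_A(y)\mathbf{1}_B(x-y) = |A||B|/|G|$.

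Next I would estimate $\|\mathbf{1}_A*\mathbf{1}_B\|_2^2$ on the Fourier side. By Plancherel and the convolution formula,
\[
    \|\mathbf{1}_A*\mathbf{1}_B\|_2^2 = |G|\sum_{\xi\in G}|\widehat{\mathbf{1}_A}(\xi)|^2|\widehat{\mathbf{1}_B}(\xi)|^2.
\]
Separate the $\xi=0$ term, which contributes $|G|\cdot(|A|/|G|)^2(|B|/|G|)^2 = |A|^2|B|^2/|G|^3$. For the remaining terms, bound $|\widehat{\mathbf{1}_B}(\xi)|^2 \le \|B\|_u^2$ and then use Plancherel again for $\mathbf{1}_A$: $\sum_{\xi\neq 0}|\widehat{\mathbf{1}_A}(\xi)|^2 \le \sum_{\xi}|\widehat{\mathbf{1}_A}(\xi)|^2 = |G|^{-1}\|\mathbf{1}_A\|_2^2 = |A|/|G|$. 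Hence the tail is at most $\|B\|_u^2 |A|$, giving
\[
    \|\mathbf{1}_A*\mathbf{1}_B\|_2^2 \le \frac{|A|^2|B|^2}{|G|^3} + \|B\|_u^2|A|.
\]

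Finally I would combine the two estimates: $|A+B| \ge (|A||B|/|G|)^2 / \|\mathbf{1}_A*\mathbf{1}_B\|_2^2$, so
\[
    \frac{|G|}{|A+B|} \le \frac{|G|\cdot\|\mathbf{1}_A*\mathbf{1}_B\|_2^2}{(|A||B|/|G|)^2} \le \frac{|G|}{(|A||B|)^2/|G|^2}\left(\frac{|A|^2|B|^2}{|G|^3} + \|B\|_u^2|A|\right) = 1 + \frac{\|B\|_u^2|G|^3}{|A||B|^2},
\]
which is exactly the claimed inequality. This is entirely routine; the only point requiring a little care is making sure the $\xi=0$ term is isolated correctly in the Plancherel step, since that is what produces the leading $1$ on the right-hand side, but there is no real obstacle here.
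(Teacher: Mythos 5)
Your proof is correct and follows essentially the same route as the paper's: the Cauchy--Schwarz lower bound on $|A+B|$ via the support of $\mathbf{1}_A*\mathbf{1}_B$, followed by Plancherel and the convolution formula with the $\xi=0$ term isolated and the tail controlled by $\|B\|_u^2$. The only difference is cosmetic (you multiply through by $|G|$ at once rather than working with $|G|^{-1}\|\mathbf{1}_A*\mathbf{1}_B\|_2^2$), and the normalization constants check out.
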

\begin{proof}
By \eqref{eq:support-conv}, the Cauchy-Schwarz inequality (or just the convexity of $t\mapsto t^2$), and Fubini,
\begin{equation} \label{eq:application-CS}
    \frac{|A|^2|B|^2}{|G|^2} = \left(\sum_{x\in A+B} \mathbf{1}_A*\mathbf{1}_B(x)\right)^2 \le |A+B| \|\mathbf{1}_A*\mathbf{1}_B\|_2^2.
\end{equation}

On the other hand, by Plancherel and the convolution formula,
\begin{align*}
    |G|^{-1}\cdot \|\mathbf{1}_A*\mathbf{1}_B\|_2^2 &= \left\|\mathcal{F}\left(\mathbf{1}_A*\mathbf{1}_B\right)\right\|_2^2  =  \left\| \widehat{\mathbf{1}_A}\cdot \widehat{\mathbf{1}_B} \right\|_2^2 \\
 &\le \widehat{\mathbf{1}}_A(0)^2 \widehat{\mathbf{1}_B}(0)^2 + \|B\|_u^2 \sum_{\xi\neq 0} |\widehat{\mathbf{1}}_A(\xi)|^2 \\
 &\le \frac{|A|^2|B|^2}{|G|^4}  + \|B\|_u^2 \|\widehat{\mathbf{1}_A}\|_2^2 \\
 &= \frac{|A|^2|B|^2}{|G|^4} + \frac{\|B\|_u^2\|\mathbf{1}_A\|_2^2}{|G|} \\
 &= \frac{|A|^2|B|^2}{|G|^4} + \frac{\|B\|_u^2 |A|}{|G|}.
\end{align*}
Recalling \eqref{eq:application-CS} and rearranging, we obtain the claim.
\end{proof}

The following result on the existence of sets with very small linear bias is classical. See \cite[Theorem 6.8]{BabaiFourier} (note that the Fourier transform there is not normalized by dividing by $|G|$ as in our case), or \cite[Lemma 4.14]{TaoVu06} for a special case. For a prime power $q$, we let $\mathbb{F}_q$ denote the finite field with $q$ elements.
\begin{lemma} \label{lem:Gauss-sum}
    Let $q$ be a prime power. Suppose $k\mid q-1$ and let $B = \{ x^k: x\in \mathbb{F}_q\}$. Then $|B|=(q-1)/k$, and $\|B\|_u < 1/\sqrt{q}$.
\end{lemma}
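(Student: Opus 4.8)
The plan is to recognize $\mathbf{1}_B$ as an average of multiplicative characters of $\mathbb{F}_q$ and thereby reduce the linear bias $\|B\|_u$ to a sum of Gauss sums, whose absolute values are governed by the classical identity $|g(\chi,\psi)|=\sqrt q$.

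First, the cardinality. The multiplicative group $\mathbb{F}_q^{\times}$ is cyclic of order $q-1$, so the $k$-th power map $x\mapsto x^k$ is a group endomorphism whose kernel — the group of $k$-th roots of unity in $\mathbb{F}_q$ — has exactly $\gcd(k,q-1)=k$ elements, using $k\mid q-1$. Hence its image, the set of nonzero $k$-th powers, has $(q-1)/k$ elements, which is $B$. (The point $0=0^k$ contributes at most $1/q$ to the bias, which is absorbed by the strict inequality below; equivalently one may simply take $B=\{x^k:x\in\mathbb{F}_q^{\times}\}$.)

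Next, the bias. Identify $(\mathbb{F}_q,+)$ with its dual by $\xi\mapsto\psi_\xi$, where $\psi_\xi(x)=e^{2\pi i\,\mathrm{Tr}(\xi x)/p}$ and $\mathrm{Tr}\colon\mathbb{F}_q\to\mathbb{F}_p$ is the field trace; the nonzero $\xi$ correspond bijectively to the nontrivial additive characters, and under the normalization of the excerpt $|\widehat{\mathbf 1_B}(\xi)|=q^{-1}\bigl|\sum_{x\in B}\psi_\xi(x)\bigr|$, so $\|B\|_u=q^{-1}\max_{\xi\neq 0}\bigl|\sum_{x\in B}\psi_\xi(x)\bigr|$. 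The $k$ multiplicative characters $\chi$ with $\chi^k=\chi_0$ form the unique subgroup of order $k$ in the cyclic group $\widehat{\mathbb{F}_q^{\times}}$ (here $k\mid q-1$ is used again), and by orthogonality $\tfrac1k\sum_{\chi^k=\chi_0}\chi(x)$ equals $1$ when $x$ is a nonzero $k$-th power and $0$ otherwise. Hence for $\xi\neq 0$,
\[
  \sum_{x\in B}\psi_\xi(x)=\frac1k\sum_{\chi^k=\chi_0}\ \sum_{x\in\mathbb{F}_q^{\times}}\chi(x)\,\psi_\xi(x)=\frac1k\sum_{\chi^k=\chi_0} g(\chi,\psi_\xi),
\]
where $g(\chi,\psi)=\sum_{x\in\mathbb{F}_q^{\times}}\chi(x)\psi(x)$. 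The trivial character contributes $g(\chi_0,\psi_\xi)=\sum_{x\in\mathbb{F}_q^{\times}}\psi_\xi(x)=-1$, since the full sum over $\mathbb{F}_q$ vanishes for $\xi\neq 0$. For each of the $k-1$ nontrivial $\chi$ and every $\xi\neq 0$ one has $|g(\chi,\psi_\xi)|=\sqrt q$; I would prove this by the standard computation $|g(\chi,\psi)|^2=\sum_{u\in\mathbb{F}_q^{\times}}\chi(u)\sum_{y\in\mathbb{F}_q^{\times}}\psi\bigl(y(u-1)\bigr)=q$, evaluating the inner geometric sum ($q-1$ if $u=1$, $-1$ otherwise) and using $\sum_{u}\chi(u)=0$. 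Therefore
\[
  \Bigl|\sum_{x\in B}\psi_\xi(x)\Bigr|\le\frac1k\bigl(1+(k-1)\sqrt q\bigr)<\sqrt q,
\]
the last step being equivalent to $1<\sqrt q$, true since $q\ge 2$. Dividing by $q$ yields $\|B\|_u<1/\sqrt q$.

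The only genuinely nontrivial ingredient is the Gauss sum modulus $|g(\chi,\psi)|=\sqrt q$; since this is entirely classical (and cited in the excerpt), there is essentially no obstacle. The points requiring care are purely bookkeeping: tracking the normalization of the Fourier transform, and keeping the small contribution $g(\chi_0,\psi_\xi)=-1$ of the trivial multiplicative character, which is precisely what makes the final bound land strictly below $1/\sqrt q$.
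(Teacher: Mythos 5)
The paper itself does not prove this lemma; it cites Babai's lecture notes and Tao--Vu. Your proof---expanding $\mathbf{1}_B$ as an average of the $k$ multiplicative characters $\chi$ with $\chi^k=\chi_0$, reducing $q\,\widehat{\mathbf{1}_B}(\xi)$ to a sum of Gauss sums, and invoking $|g(\chi,\psi)|=\sqrt q$ for nontrivial $\chi$---is the standard argument and is exactly what those references carry out, so there is nothing genuinely different in the route. The Gauss-sum modulus computation, the $-1$ contribution of the trivial character, and the final inequality $\frac1k\bigl(1+(k-1)\sqrt q\bigr)<\sqrt q$ are all correct.

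One part of your parenthetical remark is wrong, though. You observe (correctly) that the statement is really about the \emph{nonzero} $k$-th powers, since that is the set of cardinality $(q-1)/k$; but your alternative justification---that the extra term from $0=0^k$ ``contributes at most $1/q$ to the bias, which is absorbed by the strict inequality''---fails in general. With $0$ included, the argument gives $q\,|\widehat{\mathbf{1}_B}(\xi)|\le 1+\tfrac1k\bigl(1+(k-1)\sqrt q\bigr)$, and asking this to be $<\sqrt q$ is equivalent to $\sqrt q>k+1$, which need not hold since $k$ can be as large as $q-1$. And the failure is not merely in the estimate: for $q=4$, $k=3$ one has $B=\{0,1\}$ and (computing with the trace pairing $\psi_\xi(x)=(-1)^{\mathrm{Tr}(\xi x)}$, where $\mathrm{Tr}(1)=0$ in $\mathbb{F}_4$) the frequency $\xi=1$ gives $\widehat{\mathbf{1}_B}(1)=\tfrac14(1+1)=\tfrac12=1/\sqrt q$, so the strict inequality is violated. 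The correct reading---which you also offer, and which matches the stated cardinality---is simply $B=\{x^k:x\in\mathbb{F}_q^\times\}$; with that, your argument is complete. I would delete the ``absorbed by the strict inequality'' clause and keep only the redefinition of $B$.
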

Here, the Fourier transform is taken with respect to the additive group structure of $\mathbb{F}_q$.
Using this lemma, we deduce the following crucial result, which can be seen as a discrete version of Theorem \ref{thm:full-measure-sumset}. Although it is based on well-known facts, we have not been able to find it in the literature.
\begin{proposition} \label{prop:large-sumset}
    Fix $\eta\in (0,1/3]$. For every $m_0\in\N$, there exist an integer $m_0\le m \le 4^{1/\eta} m_0$ and a set $B\subset\Z_m^d$, such that $|B|\le \eta\cdot m^d$, and
    \[
      \frac{m^d}{|A+B|} \le 1+\frac{1}{4\eta^2 |A|} \quad\text{for all } A\subset \Z_m^d.
    \]
\end{proposition}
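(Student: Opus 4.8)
The plan is to reduce to the one-dimensional case via the Chinese Remainder Theorem, then apply Lemma~\ref{lem:Gauss-sum} to produce a set of small linear bias, and finally invoke Lemma~\ref{lem:bias-to-sumset} to convert the bias bound into the desired sumset estimate. The only freedom we are given is a multiplicative window $[m_0, 4^{1/\eta}m_0]$ for the modulus $m$; this freedom is exactly what we need in order to find a prime power $q$ with a divisor $k$ of $q-1$ of the right size, since we cannot control the modulus exactly.

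First I would dispose of the dimension: if $G = \Z_m^d$ with $m$ chosen appropriately, then $\|B\|_u$ and $|B|/|G|$ behave well under taking $d$-fold products (the Fourier transform of an indicator of a product set is the product of the Fourier transforms, so the linear bias of a product is controlled by the maximum of the biases of the factors together with the fact that one factor is normalized to total mass close to $|B_1|/m$). Concretely, if $B_1 \subset \Z_m$ satisfies $|B_1| \le \eta' m$ and $\|B_1\|_u < 1/\sqrt{m}$, then $B = B_1^d \subset \Z_m^d$ satisfies $|B| \le (\eta')^d m^d \le \eta' m^d$ and $\|B\|_u$ is still $O(1/\sqrt m)$ up to a factor $(\eta')^{d-1}$ coming from the mass of the other coordinates — in fact one gets $\|B\|_u \le (\eta')^{d-1}/\sqrt m$, which is more than enough. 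So it suffices to treat $d=1$, with the $\eta$ in the one-dimensional statement replaced by something comparable.

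Next, for $d=1$: given $m_0$, pick a prime $p$ in $[m_0, 2m_0]$ (Bertrand's postulate), so $q := p$ is a prime power with $q - 1 = p-1$. We want a divisor $k \mid p-1$ with $k$ of size roughly $1/\eta$, so that $B = \{x^k : x \in \mathbb{F}_p\}$ has size $(p-1)/k \approx \eta p \le \eta m$, while Lemma~\ref{lem:Gauss-sum} gives $\|B\|_u < 1/\sqrt p$. Since an arbitrary $p-1$ need not have a divisor of exactly the right size, here is where the window comes in: instead of insisting on a single prime, one can either (a) choose $k = \lceil 1/\eta \rceil$ fixed and then search for a prime $p$ in a suitable range with $k \mid p-1$ (guaranteed by Dirichlet/Linnik-type results, but those give poor quantitative control), or — cleaner — (b) take $m = k \cdot p$ for a prime $p$ near $m_0/k$ and work in $\Z_m \cong \Z_k \times \Z_p$, using $B' = \Z_k \times B_p$ where $B_p \subset \mathbb{F}_p$ is, say, the set of squares (so $|B_p| = (p+1)/2$, $\|B_p\|_u < 1/\sqrt p$); but that makes $|B'|$ too large. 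The genuinely clean route is (c): take $m = p$ prime in $[m_0, 4^{1/\eta}m_0]$ (such a prime exists since the window has multiplicative length $> 2$), and use the classical fact that one can find a prime $p$ in any interval $[x, 2x]$ such that $p - 1$ has a divisor in $[T, 2T]$ for any prescribed $T \le x^{1/2}$, e.g. via primes of the form $p = 2\ell \cdot s + 1$. I would state the needed number-theoretic input as a clean sublemma: \emph{for every $\eta \in (0,1/3]$ and every $m_0$, there is a prime $p \in [m_0, 4^{1/\eta}m_0]$ and a divisor $k \mid p-1$ with $1/(2\eta) \le k \le 1/\eta$.} Granting it, set $m = p$, $k$ as above, $B = \{x^k : x \in \mathbb{F}_p\}$, so $|B| = (p-1)/k \le 2\eta p = 2\eta m$ and $\|B\|_u < 1/\sqrt m$. (A factor of $2$ discrepancy with the stated $|B| \le \eta m^d$ is absorbed by running the argument with $\eta/2$ in place of $\eta$ throughout and adjusting the window constant from $4^{1/\eta}$ to $4^{2/\eta}$; or one arranges $k \le 2/\eta$ more carefully. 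I would fix the bookkeeping so the constants match the statement exactly.)

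Finally, plug into Lemma~\ref{lem:bias-to-sumset}: for any $A \subset \Z_m^d = G$ with $|G| = m^d$,
\[
  \frac{|G|}{|A+B|} \le 1 + \frac{\|B\|_u^2\, |G|^3}{|A|\,|B|^2} \le 1 + \frac{m^{-1}\cdot m^{3d}}{|A|\cdot (\eta m)^{2d}} = 1 + \frac{m^{d-1}}{\eta^{2d}\,|A|} \le 1 + \frac{1}{\eta^{2d}\,|A|},
\]
using $m \ge 1$; in dimension $d=1$ (to which we reduced, with $\|B\|_u^2 \le 1/m$ and $|B| \ge \eta m$) this is exactly $1 + \frac{1}{\eta^2|A|}$ up to the constant $4$, which matches the claimed $1 + \frac{1}{4\eta^2|A|}$ once $\eta$ is replaced by $2\eta$ as above. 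I expect the main obstacle to be the number-theoretic sublemma — namely producing, in the multiplicative window $[m_0, 4^{1/\eta}m_0]$, a prime $p$ with a divisor of $p-1$ in a prescribed dyadic range; this is where the somewhat unusual-looking window $4^{1/\eta}m_0$ and the appearance of $\eta$ in the exponent come from, and getting the quantitative dependence right (as opposed to merely asymptotic) is the delicate point. Everything else — the CRT reduction to $d=1$ and the bias-to-sumset conversion — is routine given the lemmas already in the paper.
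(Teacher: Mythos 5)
Your reduction to $d=1$ by taking $d$-fold products is the step that breaks, and it cannot be repaired by adjusting constants. For a product set $B = B_1^d\subset\Z_m^d$, the linear bias is attained at frequencies with exactly one nonzero coordinate, where
\[
\bigl|\widehat{\mathbf{1}_B}(\xi)\bigr| \;=\; \bigl|\widehat{\mathbf{1}_{B_1}}(\xi_j)\bigr|\cdot\Bigl(\tfrac{|B_1|}{m}\Bigr)^{d-1},
\]
so $\|B\|_u\approx \eta^{d-1} m^{-1/2}$, which is of order $|G|^{-1/(2d)}$ rather than $|G|^{-1/2}$. Feeding this into Lemma~\ref{lem:bias-to-sumset}, the relevant quantity becomes
\[
\frac{\|B\|_u^2\,|G|^3}{|B|^2}\;\approx\;\frac{m^{d-1}}{\eta^2},
\]
which grows without bound as $m\to\infty$ whenever $d\geq 2$. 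In your final display you invoke ``using $m\geq 1$'' to conclude $m^{d-1}\leq 1$, but that inequality runs the wrong way; for $d\geq 2$ the resulting bound is worse than trivial unless $|A|$ is of size at least $m^{d-1}$, which is nowhere near the ``for all $A\subset\Z_m^d$'' conclusion the proposition requires (it must already be useful for $A$ of \emph{bounded} cardinality). The essential point is that one needs a set $B\subset\Z_m^d$ with bias of order $|G|^{-1/2}=m^{-d/2}$, and no $d$-fold product of one-dimensional biased sets can deliver that. The paper does not decouple the coordinates: it applies Lemma~\ref{lem:Gauss-sum} directly with $q=m^d$, viewing $\Z_m^d$ as the additive group of a field with $m^d$ elements, so the bias bound $q^{-1/2}=m^{-d/2}$ refers to the full group of order $m^d$, and the ratio $\|B\|_u^2|G|^3/|B|^2$ is then bounded by a constant of size about $k^2\approx\eta^{-2}$, uniformly in $m$ and $d$.

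Your number-theoretic sublemma is also more than is needed, and this is the second place the paper is simpler than you anticipate. Rather than hunting for a prime $p$ in a window with a divisor of $p-1$ in a prescribed dyadic range (which, as you worry, would need Dirichlet/Linnik-type input with effective constants), the paper takes $k$ to be a \emph{prime} in $[\eta^{-1},2\eta^{-1}]$ by Bertrand's postulate and sets $q=2^{ds(k-1)}$. Fermat's Little Theorem then gives $k\mid 2^{k-1}-1\mid q-1$ automatically, and the freedom in the exponent $s$ is precisely what yields the multiplicative window $[m_0,4^{1/\eta}m_0]$ in the statement. No equidistribution of primes in progressions is required.
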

\begin{proof}
    By Bertrand's postulate, there is a prime $k\in [\eta^{-1},2\eta^{-1}]$. By Fermat's Little Theorem,
    \begin{equation} \label{eq:FLT}
        2^{d s(k-1)}\equiv 1 \bmod k,
    \end{equation}
    where $s$ satisfies
    \[
        m_0 \le 2^{s(k-1)} < 2^k m_0 < 4^{1/\eta} m_0.
    \]
    Write $m=2^{s(k-1)}$ and $q= 2^{d s(k-1)}=m^d$ for simplicity. Since $k\mid q-1$ by \eqref{eq:FLT}, we can apply Lemma \ref{lem:Gauss-sum}. Namely, if $B=\{ x^k: x\in \mathbb{F}_q\}$, then
    \[
        \|B\|_u \le \frac{1}{\sqrt{q}}.
    \]
    To be precise, here the uniformity is with respect to the Fourier transform on $\mathbb{F}_{q}$, but as an additive group it is isomorphic to $\Z_m^d$, and so the uniformity carries over.

    Since $|B|\ge \eta q/2$, we deduce that
    \[
        \frac{\|B\|_u^2 \cdot q^3}{|B|^2} \le \frac{1}{4\eta^{2}}.
    \]
    In light of Lemma \ref{lem:bias-to-sumset}, this yields the claim.
\end{proof}

\subsection{Continuous analogues, and the proof of Theorem \ref{thm:full-measure-sumset}}

\begin{corollary} \label{cor:large-sum-Z^d}
    For every $\eta,\e\in (0,1/3]$, there is $N=N(\eta,\e)\in\N$ such that the following holds for infinitely many $m\in\N$.

    There exists a set $B\subset \{ -m,\ldots, m-1\}^d$ with $|B|\le \eta\cdot  m^d$, such that
    \[
        \left|(A+B)\cap [m]^d\right| \ge (1-\e) m^d \quad\text{for all } A\subset [m]^d \text{ with } |A|\ge N.
    \]
\end{corollary}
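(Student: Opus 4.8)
The plan is to deduce Corollary \ref{cor:large-sum-Z^d} from Proposition \ref{prop:large-sumset} by a two-step argument: first pass from the torus $\Z_m^d$ to the box $[m]^d$ (as was done for the exact covering version in Corollary \ref{cor:sum-combinatorial}), and then handle the ``full measure'' (rather than ``exact'') nature of the conclusion by combining finitely many scales. Concretely, fix $\eta,\e\in(0,1/3]$. Apply Proposition \ref{prop:large-sumset} with the given $\eta$ and with $m_0$ arbitrary; this produces, for infinitely many $m$ in any prescribed range $[m_0, 4^{1/\eta}m_0]$, a set $\tilde B\subset\Z_m^d$ with $|\tilde B|\le \eta m^d$ such that $m^d/|A+\tilde B|\le 1 + \tfrac{1}{4\eta^2|A|}$ for every $A\subset\Z_m^d$. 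Equivalently, $|\Z_m^d\setminus(A+\tilde B)| \le \tfrac{1}{4\eta^2|A|}\cdot\tfrac{|A+\tilde B|}{m^d}\cdot m^d \le \tfrac{m^d}{4\eta^2|A|}$, so if $|A|\ge N$ with $N := \lceil 1/(4\eta^2\e)\rceil$, then $|\Z_m^d\setminus(A+\tilde B)|\le \e m^d$. This $N$ depends only on $\eta$ and $\e$, as required.

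Next I would transfer this statement from the cyclic group $\Z_m^d$ to the integer box $[m]^d$. Given $A\subset [m]^d$, view it as a subset of $\Z_m^d$; the bound above gives $|A+\tilde B| \ge (1-\e)m^d$ in $\Z_m^d$. To lift the sumset from $\Z_m^d$ back to $\Z^d$ without shrinking it, set $B := \bigcup_{v\in\{-m,0\}^d} (\tilde B + v)$, exactly as in the proof of Corollary \ref{cor:sum-combinatorial}. Then $B\subset\{-m,\dots,m-1\}^d$ and $|B|\le 2^d\eta m^d$; every residue class of $A+\tilde B$ mod $m$ is realized by some element of $A+B$ landing in $[m]^d$ (each coordinate can be independently shifted by $0$ or $-m$ to pull it into $[0,m-1]$), so $|(A+B)\cap[m]^d| = |A+\tilde B|_{\Z_m^d} \ge (1-\e)m^d$. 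To keep the measure of $B$ below $\eta m^d$ rather than $2^d\eta m^d$, I would simply run Proposition \ref{prop:large-sumset} with $\eta' = 2^{-d}\eta$ (and correspondingly adjust $N$ to $N(\eta',\e) = \lceil 2^{2d}/(4\eta^2\e)\rceil$, still depending only on $\eta,\e,d$); since $\eta'\le\eta\le 1/3$ this is legitimate. The infinitude of admissible $m$ is inherited directly from Proposition \ref{prop:large-sumset}: letting $m_0\to\infty$ through an appropriate sequence yields infinitely many valid $m$.

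I do not anticipate a genuine obstacle here — this is a packaging argument — but the point requiring the most care is the bookkeeping in the torus-to-box transfer, specifically verifying that $|(A+B)\cap[m]^d|$ equals (not merely bounds) the size of $A+\tilde B$ computed in $\Z_m^d$, and that the factor-$2^d$ loss in the size of $B$ is absorbed correctly by the preliminary rescaling of $\eta$. One should also double-check that Proposition \ref{prop:large-sumset} is being quoted for \emph{all} subsets $A\subset\Z_m^d$ simultaneously with a single $B$ (it is, since the bias bound $\|\tilde B\|_u<1/\sqrt q$ is a property of $\tilde B$ alone), which is what makes the ``for all $A$ with $|A|\ge N$'' clause in the corollary legitimate. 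Finally, a remark: the statement as phrased does not claim the resulting $(A+B)\cap[m]^d$ \emph{is} a set of the prescribed form — only that its complement in $[m]^d$ is small — so no further truncation or cleanup of $B$ is needed.
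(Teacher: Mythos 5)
Your proof is correct and follows essentially the same route as the paper: apply Proposition \ref{prop:large-sumset} with $\eta$ rescaled by $2^{-d}$, extract $N$ from the bound $m^d/|A+\tilde B|\le 1+\tfrac{1}{4\eta^2|A|}$, then lift from $\Z_m^d$ to $[m]^d$ by replicating $\tilde B$ over the $2^d$ shifts in $\{-m,0\}^d$. The paper's version is more compressed (it does not spell out the derivation of $N$ or the residue-class argument), but the underlying argument is identical.
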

\begin{proof}
    By Proposition \ref{prop:large-sumset}, there are $N=N(\eta,\e)$, infinitely many $m$, and sets $B'\subset \Z_m^d$ such that $|B'|\le \eta 2^{-d} m^d$ and
    \[
        |A+B'| \ge (1-\e) m^d \quad\text{for all } A\subset \Z_m^d \text{ with } |A|\ge N.
    \]
    Identify $B'$ with a subset of $[m]^d$, and let 
    \[
        B= \bigcup\left\{ B'- m \sum_{i=1}^d  u_i e_i : u\in \{0,1\}^d \right\}.
    \]
    Take $A\subset [m]^d$ with $|A|\ge N$. Then, seeing $A$ as a subset of $\Z_m^d$, we have $|A+B'|\ge (1-\e)m^d$, where $A+B'$ is computed in $\Z_m^d$. Note that for any $a\in A$ and $b\in B'$, the sum $a+b$ in $\Z^d$ lies in $[2m]^d$, and thus there is $u\in\{0,1\}^d$ such that $c=  a+b - m\sum_{i=1}^d u_i e_i \in [m]^d$ satisfies $c\equiv a+b \bmod \Z_m^d$. Since $B\subset \{ -m,\ldots, m-1\}^d$ by definition, the claim follows.
\end{proof}

We deduce a continuous version of Corollary \ref{cor:large-sum-Z^d} by an argument very similar to those in Lemma \ref{lem:discrete-to-continuous} and Corollary \ref{cor:discrete-to-continuous}.
\begin{corollary} \label{cor:large-sum-R^d}
    For every $\eta,\e\in (0,1/3]$, there is $N=N(\eta,\e)\in\N$ such the following holds for all small enough $\delta>0$.

    Fix a dyadic scale $2^{-m'}$ and a cube $Q\in\cD_{m'}$. There exists a set $B\subset 4\cdot Q$ with $|B|\le \eta\cdot |Q|$, such that
    \[
        \bigl|(a_0+Q)\cap (A+B)\bigr| \ge (1-\e)|Q|
    \]
    for all  $A\subset [0,2^{-m'}]^d$ with  $|A|_{\delta}\ge N$ and all $a_0\in A$.

    Moreover, $B$ can be taken to be a union of closed dyadic cubes of the same size, depending only on $d,\eta,\e,\delta,m'$.
\end{corollary}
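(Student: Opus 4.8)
The plan is to transfer Corollary~\ref{cor:large-sum-Z^d} from the integer lattice to the continuum by the same discretization device used in Lemma~\ref{lem:discrete-to-continuous} and Corollary~\ref{cor:discrete-to-continuous}. By rescaling and translating we may assume $m'=0$, so $Q=[0,1]^d$ and we work with sets $A\subset[0,1]^d$; the general case follows by applying the affine map sending $[0,1]^d$ to $Q$, which rescales $\delta$ by a factor $2^{m'}$ (bounded since $m'$ is fixed) and multiplies both the exceptional set's measure and $|Q|$ by $2^{-m'd}$, preserving the statement. Fix $\eta,\e$ and let $N_0=N(\eta',\e)$ be the constant from Corollary~\ref{cor:large-sum-Z^d} with a slightly smaller $\eta'$ to absorb the multiplicative losses below; set $N=3^d N_0$ (or whatever constant the counting below forces).

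First I would choose $\delta$ dyadic and small, write $m=\delta^{-1}$, and discretize: given $A\subset[0,1]^d$ with $|A|_\delta\ge N$, define $\Delta[A]\subset[m]^d$ by replacing each point of $A$ lying in the dyadic cube $Q_v=\tfrac1m v+[0,\tfrac1m]^d$ with the index $v$. As in Lemma~\ref{lem:discrete-to-continuous}, each $Q_v$ is covered by $2^d+1$ balls of radius $\delta$, so $|\Delta[A]|\ge 3^{-d}|A|_\delta\ge N_0$. Apply Corollary~\ref{cor:large-sum-Z^d} to get (for infinitely many $m$, hence for a cofinal set of admissible dyadic $\delta$) a set $B'\subset\{-m,\dots,m-1\}^d$ with $|B'|\le\eta' m^d$ and $|(\Delta[A]+B')\cap[m]^d|\ge(1-\e)m^d$ for every such $A$. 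One subtlety: Corollary~\ref{cor:large-sum-Z^d} gives the conclusion only for infinitely many $m$, so ``all small enough $\delta$'' in the statement must be read as ``all small enough $\delta$ in a suitable cofinal subset of $2^{-\N}$'' — I would either phrase it that way or note that the inductive construction in Lemma~\ref{lem:nonemptyinterior}/Theorem~\ref{thm:full-measure-sumset} only ever needs one admissible scale below any given threshold.

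Next I would inflate $B'$ to a continuum set. Following Corollary~\ref{cor:discrete-to-continuous}, set $\widetilde B=\bigcup_{b\in B'}\bigcup_{b'\sim b}Q_{b'}$ where $b'\sim b$ means $b-b'\in\{0,\pm1\}^d$, and $B=\widetilde B\cap(4Q)$ (the dilation by $3$ from the $\sim$-neighbourhood plus the $[-1,1]^d$-type correction is comfortably inside $4Q$ after rescaling back). Then $|B|\le 3^d|B'|\cdot m^{-d}\le 3^d\eta'\le\eta|Q|$ for $\eta'=3^{-d}\eta$, and $B$ is a union of dyadic cubes of side $\delta$, i.e.\ of a size depending only on $d,\eta,\e,\delta,m'$ as required. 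The key point is the covering inclusion: if $a_0\in A$ lies in $Q_{a'}$, then for any $b\in B'$ one has $a_0+\bigcup_{b'\sim b}Q_{b'}\supset Q_{a'+b}$ (the $\sim$-thickening compensates for the fact that $a_0$ is not exactly at the corner of $Q_{a'}$), so $(a_0+B)$ covers, up to sets of measure zero, the union of the cubes $Q_{v}$ with $v\in\Delta[A]+B'$ intersected with $a_0+Q$ — wait, more carefully: translating by a lattice vector, $(A+\widetilde B)\supset\bigcup_{v\in\Delta[A]+B'}Q_v$, and intersecting with $a_0+Q=a_0+[0,1]^d$ (after the rescaling, a unit cube) the complement has measure at most $\e|Q|+O(\delta)|Q|$ coming from boundary cubes $Q_v$ only partially inside $a_0+Q$; choosing $\e$ slightly smaller at the start absorbs this $O(\delta)$ loss. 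Hence $|(a_0+Q)\cap(A+B)|\ge(1-\e)|Q|$.

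I expect the main obstacle to be purely bookkeeping rather than conceptual: tracking the several constant-factor losses ($3^d$ from $\Delta[A]$, $3^d$ from inflating $B'$, $2^{m'd}$ from rescaling, the $O(\delta)$ boundary loss, the factor $2^d$ from the $\Z_m^d$-to-$[m]^d$ passage already inside Corollary~\ref{cor:large-sum-Z^d}) and arranging that they can all be absorbed by shrinking $\eta$ and $\e$ by dimension-dependent factors before invoking Corollary~\ref{cor:large-sum-Z^d}, so that the final bounds are exactly $\eta|Q|$ and $(1-\e)|Q|$. The second, more genuine, point to be careful about is the ``infinitely many $m$'' clause: one must make sure the construction consuming this corollary (the proof of Theorem~\ref{thm:full-measure-sumset}) is flexible enough to work with an arbitrary admissible scale below each threshold, which it is, since at each stage of a nested construction one only needs some small enough dyadic $\delta$ from the admissible set.
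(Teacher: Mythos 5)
The overall strategy is right, and your remarks on the ``infinitely many $m$'' clause are reasonable (though the paper resolves it more cleanly by just passing to $\delta'=1/m\le\delta$ and using monotonicity $|A|_{1/m}\ge|A|_\delta$, so the statement needs no weakening). But there is a genuine gap at the step where you transfer the density estimate to $a_0+Q$.

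After discretizing $A\subset[0,1]^d$ to $\Delta[A]\subset[m]^d$ and applying Corollary~\ref{cor:large-sum-Z^d}, what you obtain is that $\Delta[A]+B'$ has density $\ge 1-\e$ \emph{in $[m]^d$}, hence that $A+\widetilde B$ has density $\ge 1-\e$ \emph{in $[0,1]^d$}. But the corollary demands density $\ge 1-\e$ in $a_0+[0,1]^d$ for an \emph{arbitrary} $a_0\in A\subset[0,1]^d$. These are not the same cube: if $a_0$ is near $(1,\ldots,1)$, then $a_0+[0,1]^d$ is essentially $[1,2]^d$, which is almost disjoint from $[0,1]^d$, and the discrete estimate says nothing about density there. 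Your proposed fix — that the discrepancy is only $O(\delta)|Q|$ from boundary cubes partially inside $a_0+Q$ — is incorrect: the discrepancy is not a boundary effect and can be of order $|Q|$. What is missing is a mechanism to relocate the density estimate from $[0,1]^d$ to $a_0+[0,1]^d$. The paper does this in two steps: first, pigeonhole one of the $2^d$ unit cubes $Q_0$ having $a_0$ as a vertex so that $|A\cap Q_0|_\delta\ge 2^{-d}N$, translate $A\cap Q_0$ so its lower-left corner $v_0$ is at the origin, and discretize \emph{that}; this yields density $\ge 1-\e$ in $v_0+[0,1]^d$. Second, enlarge $B$ by the $2^d$ lattice translates $\{0,1\}^d$ (the set $B_2$ in the paper's proof), which converts density in $v_0+[0,1]^d$ into density in $a_0+[0,1]^d$ via $a_0=v_0+u$ with $u\in\{0,1\}^d$. (A further $2^d$ translates handle the fact that $Q$ has twice the side length of the unit cubes, in the paper's normalization.) Each of these steps costs a factor $2^d$, which is why the paper applies Corollary~\ref{cor:large-sum-Z^d} with $6^{-d}\eta$ and inflated $\tilde N$. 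Without some version of this pigeonhole-and-translate device, the proof does not close.

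Smaller points: your sentence ``$(a_0+B)$ covers $\ldots$'' should read ``$(A+B)$''; and the constant bookkeeping you sketch ($3^d$ from $\Delta[A]$, $3^d$ from inflating $B'$, etc.) needs the extra factors of $2^d$ from the pigeonhole and from the translation of $B$, so your choice $N=3^dN_0$ and $\eta'=3^{-d}\eta$ should be adjusted to something of the shape $N=C_d2^dN_0$ and $\eta'=6^{-d}\eta$ as in the paper.
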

\begin{proof}
    By rescaling and translating, we may assume that  $A\subset [0,2]^d$, and $Q=[-1,1]^d$.

    Suppose first that $\delta=1/m$, where $m$ is one of the values to which Corollary \ref{cor:large-sum-Z^d} applies with $6^{-d}\eta$ in place of $\eta$ and $\widetilde{N}:=C_d 2^d N$ in place of $N$, where $C_d \geq 1$ is a constant to be chosen later. (This defines $\widetilde{N}$, and therefore also $N=C_d^{-1} 2^{-d}\widetilde{N}$.)

    By Corollary \ref{cor:large-sum-Z^d}, there exists $B'\subset \{-m,\ldots, m-1\}^d$ with $|B'|\le \eta 6^{-d}\cdot m^d$, such that
    \begin{equation} \label{eq:large-sum-Z^d}
        |A'+B'| \ge (1-\e) m^d \quad\text{for all } A\subset [m]^d \text{ with } |A'|\ge \widetilde{N}.
    \end{equation}
    Let $Q_v$ be given by \eqref{eq:Qv} and define
    \begin{align*}
        B_1 &= \bigcup_{b\in B'} \bigcup_{b'\sim b} Q_{b'},\\
        B_2 &= \bigcup_{u\in\{0,1\}^d} B_1+\sum_{i=1}^d u_i e_i,\\
        B &= \bigcup_{v\in\{0,1\}^d} B_2 +\sum_{i=1}^d v_i e_i,
    \end{align*}
    where $b'\sim b$ if $b-b'\in \{0,\pm 1\}^d$. Then, $B\subset [-2,4]^d\subset 4\cdot Q$, and $|B|\le \eta$.

    Suppose $A\subset [0,2]^d$ has $|A|_{\delta}\ge N$, and fix $a_0\in A$. Covering $[0,2]^d$ by $2^d$ cubes of side length $1$ having $a_0$ as a vertex, we can pigeonhole one such cube, say $Q_0$, such that
    \[
        |A\cap Q_0|_{\delta} \ge 2^{-d} N .
    \]
    Let $v_0$ be the lower-left corner of $Q_0$.
    Let $A'$ be obtained from $(A\cap Q_0)-v_0\subset [0,1]^d$ by replacing each element in $\bigl[(A\cap Q_0)-v_0\bigr]\cap Q_{a'}$ by $a'$. Then $A'\subset [m]^d$. Moreover, since each $Q_{a'}$ can be covered by $C_d$ balls of radius $\delta$ for some constant $C_d$ depending only on $d$, we have $|A'| \ge C_d^{-1} |A\cap Q_0|_{\delta} \ge \widetilde{N}$.

    If $a\in Q_{a'}$ and $b\in B'$, then
    \[
        a+\bigcup_{b'\sim b} Q_{b'} \supset Q_{a'+b'}.
    \]
    It follows from \eqref{eq:large-sum-Z^d} that
    \[
        \bigl|(v_0+[0,1]^d)\cap (A+B_1)\bigr| \ge (1-\e) m^{-d}|A'+B'|\ge 1-\e.
    \]
    Since $v_0, a_0$ are both vertices of $Q_0$ and $v_0$ is the lower-left corner, we have $a_0=v_0+\sum_{i=1}^d u_i e_i$ for some $u\in\{0,1\}^d$, we deduce that
    \[
        \bigl|(a_0+[0,1]^d)\cap (A+B_2) \bigr| \ge 1-\e.
    \]
    Finally, it follows from the definition of $B$ that
    \[
        \bigl|(a_0+Q)\cap (A+B)\bigr| \ge (1-\e)2^d = (1-\e)|Q|.
    \]
    If $\delta$ is smaller than $1/m_0$, where $m_0$ is the smallest value of $m$ such that the above argument applies, then $\delta
    \ge 1/m$ some $m>m_0$ for which Corollary \ref{cor:large-sum-Z^d} holds, and the claim follows since $|A|_{1/m}\ge |A|_{\delta}$.
\end{proof}

\begin{proposition}
    Let $\phi\in\Phi$ and $\eta,\e>0$. There exists a compact set $B$ (depending only on $\phi$, $\eta$, $\e$ and the ambient dimension $d$) with $|B|=0$, such that
    \[
        \bigl|[0,1]^d\cap (A+B)\bigr| \ge 1-\e,
    \]
    for all Borel sets $A\subset [0,1]^d$ with $\mathcal{H}^{\phi}(A)>\eta$.
\end{proposition}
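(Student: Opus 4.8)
The plan is to combine the uniform largeness from Lemma~\ref{lem:Hausdorff-unif-large} with the ``local Fourier decay'' sumset estimate of Corollary~\ref{cor:large-sum-R^d}, organized into a recursive construction in the spirit of Lemma~\ref{lem:nonemptyinterior}. Since $\mathcal{H}^{\phi}(A)>\eta$, Lemma~\ref{lem:Hausdorff-unif-large} supplies, for a sequence of dyadic scales $\delta_k\to 0$ fixed in advance (depending only on $\phi,\eta,d$), a compact $A'\subset A$ with $|A'\cap Q|_{\delta_k}\ge N_k:=\eta/(2^{3kd+1}\phi(\delta_k))$ for every $Q\in\mathcal{D}_k(A')$; crucially one choice of $(\delta_k)$ works for all admissible $A$ at once, and $N_k$ can be forced to grow arbitrarily fast by taking $\delta_k$ small. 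Corollary~\ref{cor:large-sum-R^d} will be the one-step device: given a dyadic cube $Q$ of side $2^{-m}$ and parameters $\eta',\e'\in(0,1/3]$, it produces an elementary set $T=T(Q,\eta',\e')\subset 4Q$ with $|T|\le\eta'|Q|$ such that $|(a_0+Q)\cap(\mathcal{A}+T)|\ge(1-\e')|Q|$ for \emph{every} $\mathcal{A}\subset[0,2^{-m}]^d$ with $|\mathcal{A}|_{\delta}\ge N(\eta',\e')$ and every $a_0\in\mathcal{A}$, once $\delta$ is small. Two features are essential: $T$ depends only on $Q$ and the parameters (not on $\mathcal{A}$ or $a_0$), and the construction is translation-equivariant in $Q$. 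These are what will make the eventual set $B$ independent of $A$.

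The recursion: set $R=[-1,2]^d$, so $[0,1]^d\subset a_0+R$ for every $a_0\in[0,1]^d$, and $K_0=R$. Given an elementary set $K_j$ which is a union of dyadic cubes of a common side $2^{-m_j}$, apply the device to each such cube $Q_s\subset K_j$ with a fixed $\eta'_j=\tfrac12 5^{-d}$, a covering quality $\e'_j$ (specified below), and a discrete scale $\delta_{m_j}$ chosen small enough that $N_{m_j}\ge N(\eta'_j,\e'_j)$ (this in particular forces $2^{3m_jd+1}\phi(\delta_{m_j})\to 0$, so Lemma~\ref{lem:Hausdorff-unif-large} applies). Put $K_{j+1}=\bigcup_s T(Q_s,\eta'_j,\e'_j)$, re-expressed as a union of dyadic cubes of a finer common side $2^{-m_{j+1}}$. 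Then $|K_{j+1}|\le\tfrac12 5^{-d}|K_j|$ and $K_{j+1}\subset K_j^{(2\cdot2^{-m_j})}$, exactly as in Lemma~\ref{lem:nonemptyinterior}. With $\Delta_j=\sum_{\ell\ge j}2\cdot 2^{-m_\ell}$ and $B=\bigcap_j K_j^{(\Delta_j)}$, the sets $K_j^{(\Delta_j)}$ are nested compacts with measure $\lesssim (5^{-d}/2)^j\to 0$, so $B$ is a compact null set; and $B$ is manifestly independent of $A$ because every $K_j$ is.

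It remains to track the covering. By translation-equivariance of $T$, for every admissible $A$, every $b\in A'$, and every cube $Q_s$ of $K_j$ one gets $|(b+Q_s)\cap((A'\cap P_b)+T(Q_s,\eta'_j,\e'_j))|\ge(1-\e'_j)|Q_s|$, where $P_b\in\mathcal{D}_{m_j}(A')$ contains $b$ (using $|A'\cap P_b|_{\delta_{m_j}}\ge N_{m_j}\ge N(\eta'_j,\e'_j)$). One maintains a finite ``base set'' $\mathcal{B}_j\subset A'$ with $|(a_0+R)\setminus(\mathcal{B}_j+K_j)|\le\mathrm{err}_j\,|R|$; letting $\mathcal{B}_{j+1}$ be the union, over $b\in\mathcal{B}_j$ and over the cubes $Q_s$, of finite witnessing subsets of $A'\cap P_b$, one checks that $\mathcal{B}_{j+1}+K_{j+1}$ covers $\mathcal{B}_j+K_j$ up to a set of measure $\le|\mathcal{B}_j|\sum_s\e'_j|Q_s|=|\mathcal{B}_j|\,\e'_j\,|K_j|$, and that $\beta_{j+1}:=|\mathcal{B}_{j+1}|$ is bounded by a quantity depending only on the construction parameters (not on $A$). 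Choosing $\e'_j$ so small that $\beta_j\,\e'_j\,|K_j|\le 3^{-d}\e\,2^{-j-1}$ keeps $\mathrm{err}_j\le 3^{-d}\e$ for all $j$. Since $\mathcal{B}_j\subset A'$, this yields $|(a_0+R)\setminus(A'+K_j^{(\Delta_j)})|\le 3^{-d}\e|R|$ for all $j$; because $A'$ is compact and $x-K_j^{(\Delta_j)}$ decreases to $x-B$, a finite-intersection argument as in Lemma~\ref{lem:nonemptyinterior} upgrades this to $|(a_0+R)\setminus(A+B)|\le 3^{-d}\e|R|$, and since $[0,1]^d\subset a_0+R$ and $|R|=3^d$ we conclude $|[0,1]^d\setminus(A+B)|\le\e$.

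The delicate point is the combination of two requirements that pull in opposite directions: the family of admissible $A$ is enormous, so $B$ must not depend on $A$, yet $B$ must be genuinely null rather than merely of small measure (a single union of the per-scale sets has positive measure, and naive intersections lose the covering property). Both are delivered by the recursion: the nested-neighbourhood structure of Lemma~\ref{lem:nonemptyinterior} produces a null compact intersection, while the fact that the Gauss-sum-based set of Corollary~\ref{cor:large-sum-R^d} works for \emph{all} sets with enough points --- unlike the purely combinatorial Corollary~\ref{cor:sum-combinatorial}, whose hypothesis would degrade over the family of all possible pieces of $A$ --- keeps $B$ uniform. The main labour in a full write-up is the bookkeeping: interleaving the choices of $m_j$, $\delta_{m_j}$, $\e'_j$, verifying that $\beta_j$ is $A$-independent, and checking the translation-equivariance underlying Corollary~\ref{cor:large-sum-R^d}.
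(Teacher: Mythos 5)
Your proposal follows essentially the same route as the paper: both reduce via Lemma~\ref{lem:Hausdorff-unif-large} to $(N_k,\delta_k)$-uniform largeness (with a single scale sequence serving all admissible $A$ at once), both iterate the local sumset estimate of Corollary~\ref{cor:large-sum-R^d} over the cubes of a shrinking elementary set, and both obtain $B$ as a decreasing intersection of slightly enlarged elementary sets, using compactness to pass the covering bound to the limit. The one place you deviate, in a useful way, is the accounting of the covering error across the recursion. You explicitly maintain the size $\beta_j=|\mathcal{B}_j|$ of the finite witness, note that $\beta_{j+1}$ is bounded in terms of construction data alone (number of level-$m_j$ cubes times a multiple of $N(\eta'_j,\e'_j)$), and then choose $\e'_j$ so small that $\beta_j\,\e'_j\,|K_j|$ is summable. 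This is genuinely needed: the per-cube estimate of Corollary~\ref{cor:large-sum-R^d} controls the uncovered part of each $a_0+Q_s$, so a naive union bound over $a_0\in\mathcal{B}_j$ and over $s$ carries a factor $\beta_j\cdot|K_j|$, which must be compensated by shrinking $\e'_j$. The paper passes from the single-cube bound to $\bigl|(A'_j+B_j)\setminus(A'_{j+1}+B_{j+1})\bigr|\le 2^{-(j+1)}\e$ by ``adding up over all $s$'' with a fixed per-step tolerance, and leaves the dependence on $|A'_j|$ implicit; your recursively-tuned $\e'_j$ makes that step airtight. So the proposal is correct, matches the paper's strategy, and is a bit more careful in that one spot.
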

\begin{proof}
    The proof is similar to that of Lemma \ref{lem:nonemptyinterior}. Invoking Lemma \ref{lem:Hausdorff-unif-large}, we may assume that $A$ is $(N_k,\delta_k)$-uniformly large for some $N_k\to\infty$ and $\delta_k$ that depend only on $\phi$ and $\eta$.

    Let $\mathcal{A}$ denote the family of all sets $A$ with the following property: for any $k$ and any $Q\in\mathcal{D}_k(A)$, we have
    \begin{equation} \label{eq:good-assumption}
        |A\cap Q|_{\delta_k} \ge N_k.
    \end{equation}
    By the definition of uniformly large (Definition \ref{def:large-set}), it is enough to prove the claim for $A\in\mathcal{A}$.

    We will inductively construct a sequence of sets $B_j\subset [-1,1]^d$ and a strictly increasing sequence of integers $m_j\in\N$ such that:
    \begin{enumerate}[label=(\alph*)]
        \item \label{it:a-PM} $B_j$ is a union of closed cubes $\{Q^{(j)}_s\}_{s\in S_j}$, where $Q^{(j)}_s\in\cD_{m_j}$.
        \item \label{it:b-PM} $B_{j+1}\subset B'_j$, where
        \[
            B'_j = \bigcup_{s\in S_j} 4 Q ^{(j)}_s.
        \]
        \item \label{it:c-PM} $|B_j|\le  2^{-j}$,
        \item \label{it:d-PM} For all $A\in\mathcal{A}$, there is a finite subset $A'_j\subset A$ such that $\left| [0,1]^d\setminus (A'_j+B_j)\right|\le (1-2^{-j})\e$.

    \end{enumerate}
    Assuming the construction can be carried out, let
    \[
        B''_j = \bigcup_{s\in S_j} 6 Q^{(j)}_s,
    \]
    and note from \ref{it:a-PM}--\ref{it:b-PM} that $B''_j$ is a nested sequence. Let $B=\bigcap_{j=1}^\infty B''_j$. Then $|B|=0$ by \ref{it:c-PM}, since $|B''_j|\le 6^d |B_j|$. If $A\in\mathcal{A}$, we have
    \[
        \bigl|[0,1]^d\setminus (A+B)\bigr| = \lim_{j\to\infty} \left|[0,1]^d\setminus (A+B''_j)\right| \le \e.
    \]
    Thus, the existence of the claimed set $B$ follows from the construction.

    Set $B_0=[-1,1]^d$, $m_0=0$ and $A'_0$ given by an arbitrary element of $A$ for all $A\in\mathcal{A}$. Suppose $B_j$, $m_j$ have been constructed. Suppressing the dependence on $j$ for simplicity, let
    \[
      B_j = \bigcup_{s\in S} Q_s, \quad Q_s\in\mathcal{D}_{m}.
    \]
    Let
    \[
        N' = N(1/2,2^{-(j+1)}\e),
    \]
    where $N$ is the function from Corollary \ref{cor:large-sum-R^d}.

    Since $N_k\to\infty$ and $\delta_k\to 0$, we can pick $k\ge m$ so that $N_k\ge N'$ and $\delta_k$ is small enough that the conclusion of Corollary \ref{cor:large-sum-R^d} holds.

    Let
    \[
        B_{j+1} = \bigcup_{s\in S} B'_s,
    \]
    where $B'_s$ is the outcome of applying Corollary \ref{cor:large-sum-R^d} to $Q_s$.  Claims \ref{it:a-PM}, \ref{it:b-PM} and \ref{it:c-PM} hold by Corollary \ref{cor:large-sum-R^d} and the choice of $N'$.

    Fix $a_0\in A'$. Then, since $A\in\mathcal{A}$ and $k\ge m$, we know that $|A\cap R_0|_{\delta}\ge N'$, where $R_0$ is the dyadic cube in $\cD_{m}$ containing $a_0$. Let $A''(a_0)$ be a finite subset of $A\cap R_0$ such that $|A''(a_0)|_{\delta}\ge N'$. Then, by Corollary \ref{cor:large-sum-R^d},
    \[
        \bigl| (a_0+Q_s)\cap (A''(a_0) + B'_s) \bigr| \ge \left(1-2^{-j-1}\e\right)|Q_s|.
    \]
    Adding up over all $s$, we get that
    \[
        \bigl| (A'_j  + B_j) \setminus (A'_{j+1} + B_{j+1})\bigr| \le 2^{-(j+1)}\e.
    \]
    We conclude from the inductive assumption \ref{it:d-PM} that
    \[
        \bigl| [0,1]^d\setminus (A'_{j+1}+B_{j+1})\bigr| \le (1-2^{-j})\e + 2^{-(j+1)}\e = (1-2^{-(j+1)})\e.
    \]
    This shows that \ref{it:d-PM} continues to hold for $j+1$, and the induction is complete.
\end{proof}

We can now complete the proofs of Theorem \ref{thm:full-measure-sumset} and Corollary \ref{cor:full-measure-sumset}.

\begin{proof}[Proof of Theorem \ref{thm:full-measure-sumset}]
    For any $n$, Proposition \ref{prop:large-sumset} gives a set $B_n$ with $|B_n|=0$ such that
    \[
        \bigl|[0,1]^d\cap (A+B_n)\bigr| \ge 1-1/n
    \]
    for all Borel sets $A\subset [0,1]^d$ with $\mathcal{H}^{\phi}(A)>1/n$. Let
    \[
    B=\bigcup_n \bigcup_{v\in\Z^n} B_n+v.
    \]
    Then $B$ is a null $F_\sigma$ set in $\R^d$ such that satisfies the claim of the theorem.
\end{proof}


\begin{thebibliography}{10}

\bibitem{BabaiFourier}
L.~Babai.
\newblock The {F}ourier transform and equations over finite {A}belian groups.
\newblock Unpublished note, available at
  \url{https://people.cs.uchicago.edu/~laci/reu02/fourier.pdf}, 2002.

\bibitem{Bourgain87}
J.~Bourgain.
\newblock Construction of sets of positive measure not containing an affine
  image of a given infinite structures.
\newblock {\em Israel J. Math.}, 60(3):333--344, 1987.

\bibitem{Cutler95}
C.~D. Cutler.
\newblock The density theorem and {H}ausdorff inequality for packing measure in
  general metric spaces.
\newblock {\em Illinois J. Math.}, 39(4):676--694, 1995.

\bibitem{Eigen85}
S.~J. Eigen.
\newblock Putting convergent sequences into measurable sets.
\newblock {\em Studia Sci. Math. Hungar.}, 20(1-4):411--412, 1985.

\bibitem{EKM81}
P.~Erd\H{o}s, K.~Kunen, and R.~Daniel Mauldin.
\newblock Some additive properties of sets of real numbers.
\newblock {\em Fund. Math.}, 113(3):187--199, 1981.

\bibitem{Falconer84}
K.~J. Falconer.
\newblock On a problem of {E}rd{\H{o}}s on sequences and measurable sets.
\newblock {\em Proc. Amer. Math. Soc.}, 90(1):77--78, 1984.

\bibitem{FLX24}
D.J. Feng, C.K. Lai, and Y.~Xiong.
\newblock Erd{\H{o}}s similarity problem via bi-{L}ipschitz embedding.
\newblock {\em Int. Math. Res. Not. IMRN}, (17):12327--12342, 2024.

\bibitem{FraserHambrook23}
R.~Fraser and K.~Hambrook.
\newblock Explicit {S}alem sets in {$\Bbb{R}^n$}.
\newblock {\em Adv. Math.}, 416:Paper No. 108901, 23, 2023.

\bibitem{GLW23}
J.~Gallagher, C.K. Lai, and E.~Weber.
\newblock On a topological {E}rd{\H{o}}s similarity problem.
\newblock {\em Bull. Lond. Math. Soc.}, 55(3):1104--1119, 2023.

\bibitem{JLM25}
Y.~Jung, C.K. Lai, and Y.~Mooroogen.
\newblock Fifty years of the {E}rd{\H{o}}s similarity conjecture.
\newblock {\em Res. Math. Sci.}, 12(1):Paper No. 9, 22, 2025.

\bibitem{Kahane66}
J-P. Kahane.
\newblock Images d'ensembles parfaits par des s\'{e}ries de {F}ourier
  gaussiennes.
\newblock {\em C. R. Acad. Sci. Paris S\'{e}r. A-B}, 263:A678--A681, 1966.

\bibitem{KeletiMathe22}
T.~Keleti and A.~M\'{a}th\'{e}.
\newblock Equivalences between different forms of the {K}akeya conjecture and
  duality of {H}ausdorff and packing dimensions for additive complements.
\newblock Preprint, arXiv:2203.15731, 2022.

\bibitem{Kolountzakis97}
M.~N. Kolountzakis.
\newblock Infinite patterns that can be avoided by measure.
\newblock {\em Bull. London Math. Soc.}, 29(4):415--424, 1997.

\bibitem{Kolountzakis23}
M.~N. Kolountzakis.
\newblock Sets of full measure avoiding {C}antor sets.
\newblock {\em Bull. Hellenic Math. Soc.}, 67:1--11, 2023.

\bibitem{Mattila15}
P.~Mattila.
\newblock {\em Fourier analysis and {H}ausdorff dimension}, volume 150 of {\em
  Cambridge Studies in Advanced Mathematics}.
\newblock Cambridge University Press, Cambridge, 2015.

\bibitem{Rogers70}
C.~A. Rogers.
\newblock {\em Hausdorff measures}.
\newblock Cambridge University Press, London-New York, 1970.

\bibitem{Ruzsa16}
I.~Ruzsa.
\newblock More differences than multiple sums.
\newblock Unpublished note, available at
  \url{https://arxiv.org/pdf/1601.04146}, 2016.

\bibitem{SS15}
P.~Shmerkin and V.~Suomala.
\newblock Sets which are not tube null and intersection properties of random
  measures.
\newblock {\em J. Lond. Math. Soc. (2)}, 91(2):405--422, 2015.

\bibitem{SS18}
P.~Shmerkin and V.~Suomala.
\newblock Spatially independent martingales, intersections, and applications.
\newblock {\em Mem. Amer. Math. Soc.}, 251(1195):v+102, 2018.

\bibitem{Svetic00}
R.~E. Svetic.
\newblock The {E}rd{\H{o}}s similarity problem: a survey.
\newblock {\em Real Anal. Exchange}, 26(2):525--539, 2000/01.

\bibitem{Talagrand76}
M.~Talagrand.
\newblock Sommes vectorielles d'ensembles de mesure nulle.
\newblock {\em Ann. Inst. Fourier (Grenoble)}, 26(3):x, 137--172, 1976.

\bibitem{TaoVu06}
T.~Tao and V.~Vu.
\newblock {\em Additive combinatorics}, volume 105 of {\em Cambridge Studies in
  Advanced Mathematics}.
\newblock Cambridge University Press, Cambridge, 2006.

\end{thebibliography}

\end{document}